\newcommand{\threesat}{\lang{3SAT}}
\newtheorem{theorem}{Theorem}
\newtheorem*{theorem*}{Theorem}
\newtheorem*{3DC}{3-Decomposition Conjecture}
\newtheorem*{HISTDC}{HIST-Extension Conjecture}
\newtheorem{corollary}[theorem]{Corollary}
\newtheorem{lemma}[theorem]{Lemma}
\newtheorem{observation}[theorem]{Observation}
\theoremstyle{definition}
\newtheorem{definition}[theorem]{Definition}
\newtheorem{remark}[theorem]{Remark}
\definecolor{mcol}{RGB}{25,25,255}
\definecolor{tcol}{RGB}{25,200,25}
\definecolor{ccol}{RGB}{228,26,28}
\definecolor{myRed}{RGB}{255,51,51}
\definecolor{myBlue}{RGB}{51,51,255}
\definecolor{myDefGreen}{RGB}{0,190,76}
\definecolor{myBlack}{RGB}{0,0,0}
\tikzstyle{iedge}=[line width=1]
\tikzstyle{ivertexgrey} = [circle,draw=black!50,fill=black!50,inner sep=0.cm, minimum size=1.3mm,]
\tikzstyle{ivertexblack} = [circle,draw=black,fill=black,inner sep=0.cm, minimum size=1.3mm,]
\tikzstyle{bn}=[draw,circle,fill=white,minimum size=39pt,inner sep=0pt]
\tikzstyle{v}=[draw,circle,fill=black,minimum size=10pt,inner sep=0pt]
\tikzstyle{svertex}=[circle,inner sep=0.cm, minimum size=1.3mm, fill=black, draw=black]
\tikzstyle{novertex}=[rectangle]
\tikzset{font={\fontsize{8pt}{12}\selectfont}}
\tikzset{node/.style={circle,draw,fill=black,scale=0.5}}
\tikzset{edge/.style={very thick}}
\tikzset{>=latex'}
\tikzset{
	dot diameter/.store in=\dot@diameter,
	dot diameter=2pt,
	dot spacing/.store in=\dot@spacing,
	dot spacing=4pt,
	dots/.style={
		line width=\dot@diameter,
		line cap=round,
		dash pattern=on 0pt off \dot@spacing
	}
}
\tikzset{dotted/.style={very thick,dot diameter = 2pt, dot spacing=4pt,dots}}
\newcommand{\transformationArrow}[5][]%
{
	\draw[-{Latex[length=5*#5pt,width=4*#5pt]}, very thick, line width=#5pt] (#2,#3) to node[pos=0.5, yshift = 10pt] {\small#1} (#2+#4,#3);
}
\newcommand{\drawVertex}[7]%
{
	\node[circle,scale=#3/2] (#41) at (#1,#2) {};
	\node[circle,scale=#3/2] (#411) at (#1,#2+#3/2) {};
	\node[circle,scale=#3/2] (#422) at (#1-#3/3,#2-#3/3) {};
	\node[circle,scale=#3/2] (#433) at (#1+#3/3,#2-#3/3) {};
	\def\startlist{1,1,1}
	\def\endlist{11,22,33}
	\foreach \x [count=\c,
	evaluate=\c as \y using {{\startlist}[\c-1]},
	evaluate=\c as \z using {{\endlist}[\c-1]}] 
	in #5%
	{
		\draw[edge,\x] (#4\y) to (#4\z);
	}	
	\node[circle,scale=#3/2,draw=#6,fill=#6] at (#41) {};
	\node[circle,scale=#3/2,draw=#7,fill=#7] at (#411) {};
	\node[circle,scale=#3/2,draw=#7,fill=#7] at (#422) {};
	\node[circle,scale=#3/2,draw=#7,fill=#7] at (#433) {};
}
\newcommand{\drawVertexDown}[7]%
{
	\node[circle,scale=#3/2] (#41) at (#1,#2) {};
	\node[circle,scale=#3/2] (#411) at (#1,#2-#3/2) {};
	\node[circle,scale=#3/2] (#422) at (#1-#3/3,#2-#3/3) {};
	\node[circle,scale=#3/2] (#433) at (#1+#3/3,#2-#3/3) {};
	\def\startlist{1,1,1}
	\def\endlist{11,22,33}
	\foreach \x [count=\c,
	evaluate=\c as \y using {{\startlist}[\c-1]},
	evaluate=\c as \z using {{\endlist}[\c-1]}] 
	in #5%
	{
		\draw[edge,\x] (#4\y) to (#4\z);
	}	
	\node[circle,draw=#6,fill=#6,scale=#3/2] at (#41) {};
	\node[circle,scale=#3/2,draw=#7,fill=#7] at (#411) {};
	\node[circle,scale=#3/2,draw=#7,fill=#7] at (#422) {};
	\node[circle,scale=#3/2,draw=#7,fill=#7] at (#433) {};
}
\newcommand{\drawTriangle}[7]%
{
	\node[circle,scale=#3/2] (#41) at (#1,#2+#3/2) {};
	\node[circle,scale=#3/2] (#42) at (#1-2*#3/3,#2-#3/2) {};
	\node[circle,scale=#3/2] (#43) at (#1+2*#3/3,#2-#3/2) {};
	\node[circle,scale=#3/2] (#411) at (#1,#2+#3/2+#3/2) {};
	\node[circle,scale=#3/2] (#422) at (#1-2*#3/3-#3/3,#2-#3/2-#3/3) {};
	\node[circle,scale=#3/2] (#433) at (#1+2*#3/3+#3/3,#2-#3/2-#3/3) {};
	\def\startlist{1,1,2,1,2,3}
	\def\endlist{2,3,3,11,22,33}
	\foreach \x [count=\c,
	evaluate=\c as \y using {{\startlist}[\c-1]},
	evaluate=\c as \z using {{\endlist}[\c-1]}] 
	in #5%
	{
		\draw[edge,\x] (#4\y) to (#4\z);
	}
	\node[circle,draw=#6,fill=#6,scale=#3/2] at (#41) {};
	\node[circle,draw=#6,fill=#6,scale=#3/2] at (#42) {};
	\node[circle,draw=#6,fill=#6,scale=#3/2] at (#43) {};
	\node[circle,scale=#3/2,draw=#7,fill=#7] at (#411) {};
	\node[circle,scale=#3/2,draw=#7,fill=#7] at (#422) {};
	\node[circle,scale=#3/2,draw=#7,fill=#7] at (#433) {};
}
\newcommand{\drawKTwoThree}[7]%
{
	\node[circle,scale=#3/2] (#41) at (#1-#3,#2-#3/2) {};
	\node[circle,scale=#3/2] (#42) at (#1,#2-#3/2) {};
	\node[circle,scale=#3/2] (#43) at (#1+#3,#2-#3/2) {};
	\node[circle,scale=#3/2] (#44) at (#1-#3/2,#2+#3/2) {};
	\node[circle,scale=#3/2] (#45) at (#1+#3/2,#2+#3/2) {};
	\node[circle,scale=#3/2] (#411) at (#1-#3-#3/3,#2-#3/2-#3/3) {};
	\node[circle,scale=#3/2] (#422) at (#1,#2-#3/2-#3/2) {};
	\node[circle,scale=#3/2] (#433) at (#1+#3+#3/3,#2-#3/2-#3/3) {};
	\def\startlist{4,4,4,5,5,5,1,2,3}
	\def\endlist{1,2,3,1,2,3,11,22,33}
	\foreach \x [count=\c,
	evaluate=\c as \y using {{\startlist}[\c-1]},
	evaluate=\c as \z using {{\endlist}[\c-1]}] 
	in #5%
	{
		\draw[edge,\x] (#4\y) to (#4\z);
	}
	\node[circle,draw=#6,fill=#6,scale=#3/2] at (#41) {};
	\node[circle,draw=#6,fill=#6,scale=#3/2] at (#42) {};
	\node[circle,draw=#6,fill=#6,scale=#3/2] at (#43) {};
	\node[circle,draw=#6,fill=#6,scale=#3/2] at (#44) {};
	\node[circle,draw=#6,fill=#6,scale=#3/2] at (#45) {};
	\node[circle,scale=#3/2,draw=#7,fill=#7] at (#411) {};
	\node[circle,scale=#3/2,draw=#7,fill=#7] at (#422) {};
	\node[circle,scale=#3/2,draw=#7,fill=#7] at (#433) {};
}
\newcommand{\drawEdge}[7]%
{
	\node[circle,scale=#3/2] (#41) at (#1-#3/2,#2) {};
	\node[circle,scale=#3/2] (#42) at (#1+#3/2,#2) {};
	\node[circle,scale=#3/2] (#411) at (#1-#3/2-#3/3,#2+#3/3) {};
	\node[circle,scale=#3/2] (#422) at (#1+#3/2+#3/3,#2+#3/3) {};
	\node[circle,scale=#3/2] (#413) at (#1-#3/2-#3/3,#2-#3/3) {};
	\node[circle,scale=#3/2] (#424) at (#1+#3/2+#3/3,#2-#3/3) {};
	\def\startlist{1,1,2,1,2}
	\def\endlist{2,11,22,13,24}
	\foreach \x [count=\c,
	evaluate=\c as \y using {{\startlist}[\c-1]},
	evaluate=\c as \z using {{\endlist}[\c-1]}] 
	in #5%
	{
		\draw[edge,\x] (#4\y) to (#4\z);
	}
	\node[circle,draw=#6,fill=#6,scale=#3/2] at (#41) {};
	\node[circle,draw=#6,fill=#6,scale=#3/2] at (#42) {};
	\node[circle,scale=#3/2,draw=#7,fill=#7] at (#411) {};
	\node[circle,scale=#3/2,draw=#7,fill=#7] at (#422) {};
	\node[circle,scale=#3/2,draw=#7,fill=#7] at (#413) {};
	\node[circle,scale=#3/2,draw=#7,fill=#7] at (#424) {};
}
\newcommand{\drawSquare}[7]%
{
	\node[circle,scale=#3/2] (#41) at (#1-#3/2,#2+#3/2) {};
	\node[circle,scale=#3/2] (#42) at (#1+#3/2,#2+#3/2) {};
	\node[circle,scale=#3/2] (#43) at (#1-#3/2,#2-#3/2) {};
	\node[circle,scale=#3/2] (#44) at (#1+#3/2,#2-#3/2) {};
	\node[circle,scale=#3/2] (#411) at (#1-#3/2-#3/3,#2+#3/2+#3/3) {};
	\node[circle,scale=#3/2] (#422) at (#1+#3/2+#3/3,#2+#3/2+#3/3) {};
	\node[circle,scale=#3/2] (#433) at (#1-#3/2-#3/3,#2-#3/2-#3/3) {};
	\node[circle,scale=#3/2] (#444) at (#1+#3/2+#3/3,#2-#3/2-#3/3) {};
	\def\startlist{1,1,2,3,1,2,3,4}
	\def\endlist{2,3,4,4,11,22,33,44}
	\foreach \x [count=\c,
	evaluate=\c as \y using {{\startlist}[\c-1]},
	evaluate=\c as \z using {{\endlist}[\c-1]}] 
	in #5%
	{
		\draw[edge,\x] (#4\y) to (#4\z);
	}
	\node[circle,draw=#6,fill=#6,scale=#3/2] at (#41) {};
	\node[circle,draw=#6,fill=#6,scale=#3/2] at (#42) {};
	\node[circle,draw=#6,fill=#6,scale=#3/2] at (#43) {};
	\node[circle,draw=#6,fill=#6,scale=#3/2] at (#44) {};
	\node[circle,scale=#3/2,draw=#7,fill=#7] at (#411) {};
	\node[circle,scale=#3/2,draw=#7,fill=#7] at (#422) {};
	\node[circle,scale=#3/2,draw=#7,fill=#7] at (#433) {};
	\node[circle,scale=#3/2,draw=#7,fill=#7] at (#444) {};
}
\newcommand{\drawDomino}[7]%
{
	\node[circle,scale=#3/2] (#41) at (#1-#3/2,#2+3*#3/4) {};
	\node[circle,scale=#3/2] (#42) at (#1+#3/2,#2+3*#3/4) {};
	\node[circle,scale=#3/2] (#43) at (#1-#3/2,#2-3*#3/4) {};
	\node[circle,scale=#3/2] (#44) at (#1+#3/2,#2-3*#3/4) {};
	\node[circle,scale=#3/2] (#45) at (#1-#3/2,#2) {};
	\node[circle,scale=#3/2] (#46) at (#1+#3/2,#2) {};
	\node[circle,scale=#3/2] (#411) at (#1-#3/2-#3/3,#2+3*#3/4+#3/3) {};
	\node[circle,scale=#3/2] (#422) at (#1+#3/2+#3/3,#2+3*#3/4+#3/3) {};
	\node[circle,scale=#3/2] (#433) at (#1-#3/2-#3/3,#2-3*#3/4-#3/3) {};
	\node[circle,scale=#3/2] (#444) at (#1+#3/2+#3/3,#2-3*#3/4-#3/3) {};
	\def\startlist{1,1,2,5,5,6,3,1,2,3,4}
	\def\endlist{2,5,6,6,3,4,4,11,22,33,44}
	\foreach \x [count=\c,
	evaluate=\c as \y using {{\startlist}[\c-1]},
	evaluate=\c as \z using {{\endlist}[\c-1]}] 
	in #5%
	{
		\draw[edge,\x] (#4\y) to (#4\z);
	}
	\node[circle,draw=#6,fill=#6,scale=#3/2] at (#41) {};
	\node[circle,draw=#6,fill=#6,scale=#3/2] at (#42) {};
	\node[circle,draw=#6,fill=#6,scale=#3/2] at (#43) {};
	\node[circle,draw=#6,fill=#6,scale=#3/2] at (#44) {};
	\node[circle,draw=#6,fill=#6,scale=#3/2] at (#45) {};
	\node[circle,draw=#6,fill=#6,scale=#3/2] at (#46) {};
	\node[circle,scale=#3/2,draw=#7,fill=#7] at (#411) {};
	\node[circle,scale=#3/2,draw=#7,fill=#7] at (#422) {};
	\node[circle,scale=#3/2,draw=#7,fill=#7] at (#433) {};
	\node[circle,scale=#3/2,draw=#7,fill=#7] at (#444) {};
}
\newcommand{\drawDominoFlip}[7]%
{
	\node[circle,scale=#3/2] (#41) at (#1-#3/2,#2+3*#3/4) {};
	\node[circle,scale=#3/2] (#42) at (#1+#3/2,#2+3*#3/4) {};
	\node[circle,scale=#3/2] (#43) at (#1-#3/2,#2-3*#3/4) {};
	\node[circle,scale=#3/2] (#44) at (#1+#3/2,#2-3*#3/4) {};
	\node[circle,scale=#3/2] (#45) at (#1-#3/2,#2) {};
	\node[circle,scale=#3/2] (#46) at (#1+#3/2,#2) {};
	\node[circle,scale=#3/2] (#411) at (#1-#3/2-#3/3,#2+3*#3/4+#3/3) {};
	\node[circle,scale=#3/2] (#422) at (#1+#3/2+#3/3,#2+3*#3/4+#3/3) {};
	\node[circle,scale=#3/2] (#433) at (#1-#3/2-#3/3,#2-3*#3/4-#3/3) {};
	\node[circle,scale=#3/2] (#444) at (#1+#3/2+#3/3,#2-3*#3/4-#3/3) {};
	\def\startlist{1,1,2,5,5,6,3,1,2,3,4}
	\def\endlist{2,5,6,6,4,3,4,11,22,33,44}
	\foreach \x [count=\c,
	evaluate=\c as \y using {{\startlist}[\c-1]},
	evaluate=\c as \z using {{\endlist}[\c-1]}] 
	in #5%
	{
		\draw[edge,\x] (#4\y) to (#4\z);
	}
	\node[circle,draw=#6,fill=#6,scale=#3/2] at (#41) {};
	\node[circle,draw=#6,fill=#6,scale=#3/2] at (#42) {};
	\node[circle,draw=#6,fill=#6,scale=#3/2] at (#43) {};
	\node[circle,draw=#6,fill=#6,scale=#3/2] at (#44) {};
	\node[circle,draw=#6,fill=#6,scale=#3/2] at (#45) {};
	\node[circle,draw=#6,fill=#6,scale=#3/2] at (#46) {};
	\node[circle,scale=#3/2,draw=#7,fill=#7] at (#411) {};
	\node[circle,scale=#3/2,draw=#7,fill=#7] at (#422) {};
	\node[circle,scale=#3/2,draw=#7,fill=#7] at (#433) {};
	\node[circle,scale=#3/2,draw=#7,fill=#7] at (#444) {};
}
\newcommand{\drawGFour}[7]%
{
	\node[circle,scale=#3/2] (#41) at (#1-#3/2,#2+3*#3/4) {};
	\node[circle,scale=#3/2] (#42) at (#1+#3/2,#2+3*#3/4) {};
	\node[circle,scale=#3/2] (#43) at (#1-#3/2,#2-3*#3/4) {};
	\node[circle,scale=#3/2] (#44) at (#1+#3/2,#2-3*#3/4) {};
	\node[circle,scale=#3/2] (#45) at (#1-#3/2,#2) {};
	\node[circle,scale=#3/2] (#46) at (#1+#3/2,#2) {};
	\node[circle,scale=#3/2] (#411) at (#1-#3/2-#3/3,#2+3*#3/4+#3/3) {};
	\node[circle,scale=#3/2] (#422) at (#1+#3/2+#3/3,#2+3*#3/4+#3/3) {};
	\node[circle,scale=#3/2] (#433) at (#1-#3/2-#3/3,#2-3*#3/4-#3/3) {};
	\node[circle,scale=#3/2] (#444) at (#1+#3/2+#3/3,#2-3*#3/4-#3/3) {};
	\def\startlist{1,1,2,5,6,5,6,1,2,3,4}
	\def\endlist{2,5,6,3,4,4,3,11,22,33,44}
	\foreach \x [count=\c,
	evaluate=\c as \y using {{\startlist}[\c-1]},
	evaluate=\c as \z using {{\endlist}[\c-1]}] 
	in #5%
	{
		\draw[edge,\x] (#4\y) to (#4\z);
	}
	\node[circle,draw=#6,fill=#6,scale=#3/2] at (#41) {};
	\node[circle,draw=#6,fill=#6,scale=#3/2] at (#42) {};
	\node[circle,draw=#6,fill=#6,scale=#3/2] at (#43) {};
	\node[circle,draw=#6,fill=#6,scale=#3/2] at (#44) {};
	\node[circle,draw=#6,fill=#6,scale=#3/2] at (#45) {};
	\node[circle,draw=#6,fill=#6,scale=#3/2] at (#46) {};
	\node[circle,scale=#3/2,draw=#7,fill=#7] at (#411) {};
	\node[circle,scale=#3/2,draw=#7,fill=#7] at (#422) {};
	\node[circle,scale=#3/2,draw=#7,fill=#7] at (#433) {};
	\node[circle,scale=#3/2,draw=#7,fill=#7] at (#444) {};
}
\newcommand{\drawGFive}[7]%
{
	\node[circle,scale=2*#3/3] (#41) at (#1-#3/2,#2+#3/2) {};
	\node[circle,scale=2*#3/3] (#42) at (#1+#3,#2+#3/2) {};
	\node[circle,scale=2*#3/3] (#43) at (#1-#3/2,#2-#3) {};
	\node[circle,scale=2*#3/3] (#44) at (#1+#3,#2-#3) {};
	\node[circle,scale=2*#3/3] (#45) at (#1,#2) {};
	\node[circle,scale=2*#3/3] (#46) at (#1+#3,#2) {};
	\node[circle,scale=2*#3/3] (#47) at (#1,#2-#3) {};
	\node[circle,scale=2*#3/3] (#48) at (#1-#3,#2-#3/4) {};
	\node[circle,scale=2*#3/3] (#411) at (#1-#3/2-#3/2,#2+#3/2+#3/2) {};
	\node[circle,scale=2*#3/3] (#422) at (#1+#3+#3/2,#2+#3/2+#3/2) {};
	\node[circle,scale=2*#3/3] (#433) at (#1-#3/2-#3/2,#2-#3-#3/2) {};
	\node[circle,scale=2*#3/3] (#444) at (#1+#3+#3/2,#2-#3-#3/2) {};
	\def\startlist{5,5,6,7,1,2,3,8,8,8,1,2,3,4}
	\def\endlist{6,7,4,4,5,6,7,1,2,3,11,22,33,44}
	\foreach \x [count=\c,
	evaluate=\c as \y using {{\startlist}[\c-1]},
	evaluate=\c as \z using {{\endlist}[\c-1]}] 
	in #5%
	{
		\ifthenelse{\c=9}
		{
			\draw[edge,\x] (#4\y) to[bend left = 20] (#4\z);
		}
		{
			\draw[edge,\x] (#4\y) to[] (#4\z);
		}
	}
	\node[circle,scale=2*#3/3,draw=#6,fill=#6] at (#41) {};
	\node[circle,scale=2*#3/3,draw=#6,fill=#6] at (#42) {};
	\node[circle,scale=2*#3/3,draw=#6,fill=#6] at (#43) {};
	\node[circle,scale=2*#3/3,draw=#6,fill=#6] at (#44) {};
	\node[circle,scale=2*#3/3,draw=#6,fill=#6] at (#45) {};
	\node[circle,scale=2*#3/3,draw=#6,fill=#6] at (#46) {};
	\node[circle,scale=2*#3/3,draw=#6,fill=#6] at (#47) {};
	\node[circle,scale=2*#3/3,draw=#6,fill=#6] at (#48) {};
	\node[circle,scale=2*#3/3,draw=#7,fill=#7] at (#411) {};
	\node[circle,scale=2*#3/3,draw=#7,fill=#7] at (#422) {};
	\node[circle,scale=2*#3/3,draw=#7,fill=#7] at (#433) {};
	\node[circle,scale=2*#3/3,draw=#7,fill=#7] at (#444) {};
}
\newcommand{\drawPetMinusV}[7]%
{
	\node[circle,scale=#3] (#45) at (#1-#3*0.59,#2+#3*0.81) {};
	\node[circle,scale=#3] (#46) at (#1+#3*0.59,#2+#3*0.81) {};
	\node[circle,scale=#3] (#47) at (#1-#3*0.95,#2-#3*0.31) {};
	\node[circle,scale=#3] (#48) at (#1+#3*0.95,#2-#3*0.31) {};
	\node[circle,scale=#3] (#49) at (#1,#2-#3) {};
	\node[circle,scale=#3] (#41) at (#1-#3*1.18,#2+#3*1.62) {};
	\node[circle,scale=#3] (#42) at (#1+#3*1.18,#2+#3*1.62) {};
	\node[circle,scale=#3] (#43) at (#1-#3*1.9,#2-#3*0.62) {};
	\node[circle,scale=#3] (#44) at (#1+#3*1.9,#2-#3*0.62) {};
	\node[circle,scale=#3] (#433) at (#1-#3*1.9-2*#3/3,#2-#3*0.62-2*#3/3) {};
	\node[circle,scale=#3] (#444) at (#1+#3*1.9+2*#3/3,#2-#3*0.62-2*#3/3) {};
	\node[circle,scale=#3] (#499) at (#1,#2-2*#3) {};
	\def\startlist{3,1,2,5,8,7,6,9,1,2,3,4,3,4,9}
	\def\endlist{1,2,4,8,7,6,9,5,5,6,7,8,33,44,99}
	\foreach \x [count=\c,
	evaluate=\c as \y using {{\startlist}[\c-1]},
	evaluate=\c as \z using {{\endlist}[\c-1]}] 
	in #5%
	{
		\draw[edge,\x] (#4\y) to[] (#4\z);
	}
	\node[circle,scale=#3,draw=#6,fill=#6] (#45) at (#1-#3*0.59,#2+#3*0.81) {};
	\node[circle,scale=#3,draw=#6,fill=#6] (#46) at (#1+#3*0.59,#2+#3*0.81) {};
	\node[circle,scale=#3,draw=#6,fill=#6] (#47) at (#1-#3*0.95,#2-#3*0.31) {};
	\node[circle,scale=#3,draw=#6,fill=#6] (#48) at (#1+#3*0.95,#2-#3*0.31) {};
	\node[circle,scale=#3,draw=#6,fill=#6] (#49) at (#1,#2-#3) {};
	\node[circle,scale=#3,draw=#6,fill=#6] (#41) at (#1-#3*1.18,#2+#3*1.62) {};
	\node[circle,scale=#3,draw=#6,fill=#6] (#42) at (#1+#3*1.18,#2+#3*1.62) {};
	\node[circle,scale=#3,draw=#6,fill=#6] (#43) at (#1-#3*1.9,#2-#3*0.62) {};
	\node[circle,scale=#3,draw=#6,fill=#6] (#44) at (#1+#3*1.9,#2-#3*0.62) {};
	\node[circle,scale=#3,draw=#7,fill=#7] (#433) at (#1-#3*1.9-2*#3/3,#2-#3*0.62-2*#3/3) {};
	\node[circle,scale=#3,draw=#7,fill=#7] (#444) at (#1+#3*1.9+2*#3/3,#2-#3*0.62-2*#3/3) {};
	\node[circle,scale=#3,draw=#7,fill=#7] (#499) at (#1,#2-2*#3) {};
}
\newcommand{\drawBag}[5]%
{%
	\node[draw, ellipse, scale=0.8, minimum width=3cm,fill=#5] (#2) at #1 
	{
		\hspace{-15pt}
		\begin{tabular}{c}
			#3 \\ 
			#4
		\end{tabular}
		\hspace{-15pt}
	};
}
\newcommand{\petv}{\ensuremath{\text{Pet}-v}}
\newcommand{\calV}{\ensuremath{\mathcal{V}}}
\title{Reductions for the 3-Decomposition Conjecture}
\author{Oliver Bachtler and Irene Heinrich\footnote{The work of Irene Heinrich was supported by the
		European Research Council (ERC) under the European Union’s Horizon 2020
		research and innovation programme (EngageS: grant agreement No.\ 820148).}}
\begin{document}
\maketitle

\begin{abstract}
	The 3-decomposition conjecture is wide open.
	It asserts that every finite connected cubic graph can be decomposed into a spanning tree, a disjoint union of cycles, and a matching.
	We show that every such decomposition is derived from a homeomorphically irreducible spanning tree (HIST).
	This allows us to propose a novel reformulation of the 3-decomposition conjecture: the HIST-extension conjecture.
	
	We also prove that the following graphs are reducible configurations with respect to the 3-decomposition conjecture: the triangle, the $K_{2,3}$, the Petersen graph with one vertex removed, the claw-square, the twin-house, and the domino.
	As an application, we show that all 3-connected graphs of tree-width at most~3 or of path-width at most~4 satisfy the 3-decomposition conjecture and that a 3-connected minimum counterexample to the conjecture is triangle-free, all cycles of length at most~6 are induced, and every edge is in the centre of an induced $P_6$.
	
	Finally, we automate the na\"ive part of the process of checking whether a configuration is reducible and we prove that all graphs of order at most~20 satisfy the 3-decomposition conjecture.\end{abstract}

\section{Introduction}
Removing the edges of a spanning tree from a cubic graph results in a graph of maximum degree~2, which is necessarily a union of vertex-disjoint paths and cycles.
The 3-decomposition conjecture, postulated by Hoffmann-Ostenhof, see~\cite{HoffmannOstenhof2011, Cameron2011}, asserts that the spanning tree can be chosen so that all of the arising paths are of length~0 or~1.

\begin{3DC}
	Every finite connected cubic graph can be decomposed into a spanning tree, a 2-regular graph, and a (possibly empty) matching.
\end{3DC}
Any such decomposition $(T,C,M)$, consisting of a spanning tree $T$, a 2-regular graph $C$, and a matching $M$, is called a \emph{3-decomposition}.
So far, the 3-decomposition conjecture remains wide open.
It has been proved for the following classes: planar cubic graphs~\cite{HoffmannOstenhof2018}, traceable cubic graphs~\cite{Abdolhosseini2016, Liu2020}, generalised Hamiltonian cubic graphs~\cite{Bachtler2020, Xie2020}, and claw-free graphs~\cite{Aboomahigir2018, Hong2020}.
All of these results exploit one of the following two approaches:
\begin{enumerate}[(A)]
	\item \label{itm: reduction} \emph{Reduction:} prove a set of configurations to be unavoidable for the considered graph class. 
	Show that  each configuration is reducible with respect to the 3-decomposition conjecture. Moreover, prove that the reductions preserve the property of being in the considered class, cf.~\cite{HoffmannOstenhof2018, Aboomahigir2018, Hong2020, Lyngsie2019}.
	\item \emph{Manipulation of a global substructure:} pick a substructure which has a property that is guaranteed for the graphs in the considered class (for example a Hamiltonian path). Manipulate it such that the final result is a 3-decomposition, cf.~\cite{Abdolhosseini2016, Liu2020, Bachtler2020, Xie2020,BJSW21}.
\end{enumerate}
In this article, we focus on approach~\eqref{itm: reduction} as a powerful tool for tackling the 3-decomposition conjecture. 
We contribute new reducible configurations for the 3-decomposition conjecture and divide the reductions into two classes: coloured and uncoloured ones.

\paragraph{Coloured reductions.}
\emph{Coloured reductions} are applicable to graphs whose edges are given one of two colours (green and black) if the considered configuration respects the edge-colouring.
We show that the coloured reductions given in \Cref{fig: ExtensionOfDecomposition} already suffice to reduce each 3-decomposition to a 3-decomposition of a graph with a \emph{homeomorphically irreducible spanning tree} (\emph{HIST}) which is a spanning tree without degree-2 vertices (see~\cite{Chen2013} for an overview on the study of HISTs.)
\begin{restatable}{theorem}{hist}
	\label{thm: reducible or HIST}
	Let $G$ be a cubic graph with a 3-decomposition $(T, C, M)$ in which exactly the edges of $T$ are coloured green.
	If~$T$ is a HIST, then $M = \emptyset$.
	Otherwise $G$ with its 3-decomposition $(T,C,M)$ can be obtained by a Tutte-extension or a diamond-extension (see Figure~\ref{fig: ExtensionOfDecomposition}) from a smaller cubic graph $G'$ with 3-decomposition $(T',C,M')$ where exactly the edges of $T'$ are green.
	
	In particular, every 3-decomposition can be reduced to a 3-decomposition with a HIST by a finite sequence of Tutte- and diamond-reductions.
\end{restatable}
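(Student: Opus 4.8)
The plan is to first pin down, by a local degree count, exactly which vertices can be the degree-$2$ vertices of $T$, and to show that these are precisely the vertices saturated by $M$. Fix a vertex $v$; since the three edges at $v$ are partitioned among $T$, $C$ and $M$, we have $\deg_T(v)+\deg_C(v)+\deg_M(v)=3$. As $C$ is $2$-regular we have $\deg_C(v)\in\{0,2\}$, as $M$ is a matching $\deg_M(v)\in\{0,1\}$, and $\deg_T(v)\ge 1$ because $T$ is spanning. Running through the possibilities leaves exactly three vertex types: a leaf of $T$ lying on a cycle ($\deg_T=1,\deg_C=2$), a degree-$3$ vertex of $T$ ($\deg_T=3$), and a degree-$2$ vertex of $T$ that is saturated by $M$ ($\deg_T=2,\ \deg_M=1,\ \deg_C=0$). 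Thus the degree-$2$ vertices of $T$ are exactly the endpoints of the edges of $M$. This already yields the first assertion: if $T$ is a HIST it has no degree-$2$ vertex, hence no vertex is saturated by $M$, i.e.\ $M=\emptyset$.

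For the main assertion I would assume $T$ is not a HIST, so by the above $M\neq\emptyset$, and exhibit a reduction that removes one edge of $M$. Pick an edge $uv\in M$; both $u$ and $v$ are degree-$2$ vertices of $T$, say with tree-neighbours $a,b$ at $u$ and $c,d$ at $v$, and both lie off every cycle of $C$. The generic move is to suppress $u$ and $v$ simultaneously: delete both vertices and insert the edges $ab$ and $cd$. Because $u$ and $v$ have tree-degree $2$, suppressing them turns $T$ into a spanning tree $T'$ of the resulting graph $G'$ (note that $ab\notin T$ and $cd\notin T$, else $T$ would contain a triangle); the vertices $a,b,c,d$ regain degree $3$, every other vertex is untouched, and since $u,v$ met no cycle the $2$-regular graph $C$ survives verbatim. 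Setting $M'=M\setminus\{uv\}$ then gives a $3$-decomposition $(T',C,M')$ of a strictly smaller cubic graph in which exactly the edges of $T'$ are green, and $G$ with $(T,C,M)$ is recovered by reinserting the gadget. I would then verify that this inverse operation is precisely the Tutte-extension of \Cref{fig: ExtensionOfDecomposition} and that the colouring is respected.

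The delicate point, and the reason two extensions are needed, is that the naive suppression can fail to produce a simple cubic graph: the inserted edge $ab$ (or $cd$) may already be present in $C$ or $M$, or the sets $\{a,b\}$ and $\{c,d\}$ may overlap, so that a multi-edge or a small separating structure is created. The hard part of the argument is therefore a complete case analysis of the coincidences among $a,b,c,d$ and the adjacencies already present in $G$; I expect exactly these degenerate configurations to assemble into a diamond, so that in each such case $G$ arises instead by the diamond-extension. The crux is to show that the two families of local pictures are \emph{exhaustive}, that is, that for a suitably chosen edge of $M$ (for instance one whose tree-neighbourhood sits near a leaf of $T$, to control the coincidences) one of the two reductions always applies.

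Finally, the ``in particular'' clause follows by induction on $|V(G)|$. If $M=\emptyset$ then, by the same vertex classification run in the other direction, $T$ has no degree-$2$ vertex and is a HIST, so there is nothing to do. Otherwise the main assertion produces a Tutte- or diamond-reduction to a strictly smaller cubic graph carrying a $3$-decomposition whose tree edges are exactly the green ones; applying the induction hypothesis to this graph and prepending the single reduction yields the desired finite sequence of Tutte- and diamond-reductions terminating in a $3$-decomposition with a HIST.
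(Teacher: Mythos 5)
Your first paragraph (the degree count classifying vertices into the three types, hence ``$T$ a HIST $\Leftrightarrow M=\emptyset$'') and your final paragraph (the induction for the ``in particular'' clause) are correct and match the paper. The generic move you describe -- suppressing both ends of an $M$-edge $uv$ and inserting $ab$ and $cd$ -- is indeed the paper's Tutte-reduction. But the substance of the theorem is precisely the part you defer: you write that you ``expect'' the degenerate configurations to assemble into a diamond and that ``the crux is to show'' the two local pictures are exhaustive. That case analysis is the bulk of the paper's proof and it is not a formality, so as it stands the proposal has a genuine gap.

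Concretely, your guess about how the degeneracies resolve is also incomplete. The paper's analysis has three levels, not two. If, say, $ab=x_uy_u$ is already an edge of $G$, one first shows it must be an $M$-edge and then applies a Tutte-reduction to \emph{that} $M$-edge instead (suppressing $x_u$ and $y_u$); only when this too fails -- which forces $x_u$ to be adjacent to $v$, so that $u,v,x_u,y_u$ span a diamond -- does the diamond-reduction enter. And the diamond-reduction itself can still fail (when $y_v$ and $\tilde y_u$ are adjacent, the inserted edge would be parallel to an existing one); in that residual case the paper shows the offending edge $y_v\tilde y_u$ is again an $M$-edge and performs a third, different Tutte-reduction on it. So ``Tutte generically, diamond in the degenerate case'' is not exhaustive, and your alternative suggestion of choosing an $M$-edge ``near a leaf of $T$'' to avoid coincidences is speculative and not obviously workable. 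To complete the proof you must actually carry out the full local analysis of which of $x_uy_u$, $x_vy_v$, and the second-order adjacencies are present, and exhibit a valid reduction in every case.
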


Observe that a cubic graph which has a HIST trivially satisfies the 3-decomposition conjecture since the edges outside the HIST form a 2-regular graph.
We exploit our two coloured reductions to propose the new \emph{HIST-extension conjecture} and to prove that it is equivalent to the 3-decomposition conjecture.

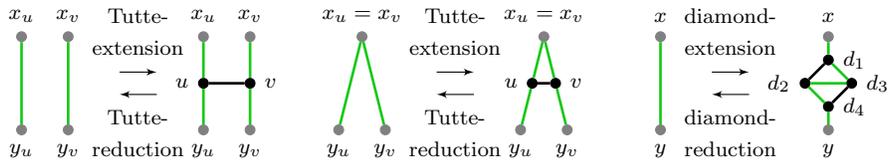
\begin{figure}[ht]
	\centering
	\begin{tikzpicture}[scale=.62]
		\begin{scope}[shift={(0,0)}]
			\begin{scope}[shift={(-.3,0)}]
				\draw
				(-.7,1) node (lo) [ivertexgrey,label=above:{$x_u$}] {}
				(-.7,-1) node (lu) [ivertexgrey,label=below:{$y_u$}] {}
				(.3,1) node (ro) [ivertexgrey,label=above:{$x_v$}] {}
				(.3,-1) node (ru) [ivertexgrey,label=below:{$y_v$}] {}
				;
				\draw[iedge, color=tcol] (lu)--(lo) (ru)--(ro);
			\end{scope}
			\begin{scope}[shift={(1.5,0)}, scale=.6]
				\draw
				(-1,.4) node (lo) {}
				(1,.4) node (ro) {}
				(0,0) node (c) {}
				(-1,-.4) node (lu) {}
				(1,-.4) node (ru) {};
				\draw[->] (lo)--(ro);
				\draw[<-] (lu)--(ru);
				\node[below of=c, align=center, yshift=9.5] (tutte-ext) {Tutte- \\ reduction};
				\node[above of=c, align=center, yshift=-9] (tutte-ext) {Tutte- \\ extension};
			\end{scope}
			\begin{scope}[shift={(3.6,0)}]
				\draw
				(-.7,1) node (lo) [ivertexgrey,label=above:{$x_u$}] {}
				(-.7,0) node (lm) [ivertexblack,label=left:{$u$}] {}
				(-.7,-1) node (lu) [ivertexgrey,label=below:{$y_u$}] {}
				(.3,1) node (ro) [ivertexgrey,label=above:{$x_v$}] {}
				(.3,0) node (rm) [ivertexblack,label=right:{$v$}] {}
				(.3,-1) node (ru) [ivertexgrey,label=below:{$y_v$}] {}
				;
				\draw[iedge, color=tcol] (lu)--(lm)--(lo) (ru)--(rm)--(ro);
				\draw[iedge] (lm)--(rm);
			\end{scope}
		\end{scope}
		\begin{scope}[shift={(6.8,0)}]
			\begin{scope}[shift={(-.3,0)}]
				\draw
				(-.2,1) node (o) [ivertexgrey,label=above:{$x_u=x_v$}] {}
				(-.7,-1) node (lu) [ivertexgrey, label=below:{$y_u$}] {}
				(.3,-1) node (ru) [ivertexgrey, label=below:{$y_v$}] {}
				;
				\draw[iedge, color=tcol] (lu)--(o)--(ru);
			\end{scope}
			\begin{scope}[shift={(1.5,0)}, scale=.6]
				\draw
				(-1,.4) node (lo) {}
				(1,.4) node (ro) {}
				(0,0) node (c) {}
				(-1,-.4) node (lu) {}
				(1,-.4) node (ru) {};
				\draw[->] (lo)--(ro);
				\draw[<-] (lu)--(ru);
				\node[below of=c, align=center, yshift=9.5] (tutte-ext) {Tutte- \\ reduction};
				\node[above of=c, align=center, yshift=-9] (tutte-ext) {Tutte- \\ extension};
			\end{scope}
			\begin{scope}[shift={(3.6,0)}]
				\draw
				(-.2,1) node (o) [ivertexgrey,label=above:{$x_u=x_v$}] {}
				(-.7,-1) node (lu) [ivertexgrey,label=below:{$y_u$}] {}
				(.3,-1) node (ru) [ivertexgrey,label=below:{$y_v$}] {}
				(-.45,0) node (lm) [ivertexblack, label=left:{$u$}] {}
				(.05,0) node (rm) [ivertexblack, label=right:{$v$}] {}
				;
				\draw[iedge, color=tcol] (lu)--(lm)--(o)--(rm)--(ru);
				\draw[iedge] (lm)--(rm);
			\end{scope}
		\end{scope}
		\begin{scope}[shift={(13,0)}]
			\begin{scope}[shift={(-.3,0)}]
				\draw
				(0,1) node (lo) [ivertexgrey, label=above:{$x$}] {}
				(0,-1) node (lu) [ivertexgrey, label=below:{$y$}] {}
				;
				\draw[iedge, color=tcol] (lu)--(lo);
			\end{scope}
			\begin{scope}[shift={(1.2,0)}, scale=.6]
				\draw
				(-1,.4) node (lo) {}
				(1,.4) node (ro) {}
				(0,0) node (c) {}
				(-1,-.4) node (lu) {}
				(1,-.4) node (ru) {};
				\draw[->] (lo)--(ro);
				\draw[<-] (lu)--(ru);
				\node[below of=c, align=center, yshift=9.5] (tutte-ext) {diamond- \\ reduction};
				\node[above of=c, align=center, yshift=-9] (tutte-ext) {diamond- \\ extension};
			\end{scope}
			\begin{scope}[shift={(3.3,0)}]
				\draw
				(0,1) node (lo) [ivertexgrey, label=above:{$x$}] {}
				(0,-1) node (lu) [ivertexgrey, label=below:{$y$}] {}
				(0,-.5) node (du) [ivertexblack, label=right:{$d_4$}]{}
				(0,.5) node (do) [ivertexblack, label=right:{$d_1$}]{}
				(-.5,0) node (dl) [ivertexblack, label=left:{$d_2$}]{}
				(.5,0) node (dr) [ivertexblack, label=right:{$d_3$}]{}
				;
				\draw[iedge, color=tcol] (lu)--(du)--(dl)--(dr)--(do)--(lo);
				\draw[iedge] (du)--(dr) (dl)--(do);
			\end{scope}
		\end{scope}	
	\end{tikzpicture}	\caption{Extensions and reductions. Exactly the tree-edges are coloured green. Grey vertices are those with neighbours outside of the configuration.
		\emph{Tutte-extension}: subdivide two green edges and join the subdivision vertices with a black edge. The two green edges may be non-incident (left figure) or incident (middle figure).
		\emph{Diamond-extension}: remove a green edge with ends $x$ and $y$; add a new diamond in which all edges are green except two non-incident ones; if $d_1$ and $d_4$ denote the two degree-2-vertices of the diamond, then join $d_1$ with $x$ and $d_4$ with $y$ by green edges.
		The inverse operations are the corresponding \emph{reductions}.
	} \label{fig: ExtensionOfDecomposition}
\end{figure}

\begin{HISTDC}
	Let $G$ be a finite connected cubic graph.
	Then there exists a cubic graph $G'$ admitting a HIST $T'\subseteq G'$ such that if exactly the edges of $T'$ are coloured green, then $G$ can be obtained from $G'$ by a finite sequence of Tutte- and diamond-extensions (see \Cref{fig: ExtensionOfDecomposition}).
\end{HISTDC}

We emphasise the strong similarities between the HIST-extension conjecture and the following theorem of Wormald.
\begin{theorem}[\cite{Wormald79}]
	Every connected cubic graph other than the~$K_4$ can be obtained from a smaller cubic graph by one of the following three operations:
	\begin{enumerate}[(i)]
		\item subdivide two edges and join the subdivision vertices by a new edge, \label{itm: Tuttestep}
		\item replace an edge by a diamond and join the degree-2 vertices of the diamond with the former ends of the replaced edge, \label{itm: wormald diamond}
		\item take the disjoint union with a $K_4$, subdivide an edge in each component of this union and join the subdivision vertices. \label{itm: wormald house}
	\end{enumerate}
\end{theorem}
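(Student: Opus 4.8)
The plan is to prove the equivalent statement that the three \emph{inverse} operations suffice: writing R(i), R(ii), R(iii) for the inverses of operations~(i)--(iii), I will show that every connected cubic graph $G \neq K_4$ admits at least one of them, producing a strictly smaller connected cubic graph $H$ from which $G$ is recovered by the matching forward operation. Concretely, R(i) deletes a non-bridge edge $uv$ and suppresses its two now degree-$2$ ends; R(ii) deletes a \emph{diamond} ($K_4$ minus an edge) whose two degree-$2$ vertices attach to the rest of $G$ and inserts the edge joining their outside neighbours; and R(iii) deletes a copy of ``$K_4$ with one edge subdivided'' that hangs off a bridge and suppresses the far end of that bridge, returning exactly the graph $H$ from which operation~(iii) would rebuild $G$. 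Each is readily checked to strictly decrease the order and, whenever it returns a simple connected graph, to keep the graph cubic and connected; so everything reduces to the Main Lemma that some reduction always returns a simple connected graph.

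First I would analyse when R(i) fails. Since $G$ is simple, suppressing a degree-$2$ vertex can create a parallel edge but never a loop, so R(i) on a non-bridge edge $uv$ fails precisely when one of its ends is the apex of a triangle formed by its two other neighbours, or when $u$ and $v$ share both of their remaining neighbours (which again forces a triangle through $uv$). Hence if $G$ is triangle-free, R(i) is clean on any edge lying on a cycle, and such an edge exists because every cubic graph contains a cycle. More generally R(i) succeeds as soon as some cycle-edge has both ends outside every triangle, so the whole difficulty is concentrated in the presence of triangles.

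Next I would run the triangle analysis. Fix a triangle $xyz$ and let $x',y',z'$ be the respective third neighbours. If $x',y',z'$ are pairwise distinct, then R(i) applied to the edge $xy$ is clean: it merely adds the edges $zx'$ and $zy'$, and distinctness guarantees these are new and mutually different. If all three coincide then $G = K_4$, which is excluded. The remaining case is that exactly two coincide, say $x'=y'=p$; then $\{x,y,z,p\}$ induces a diamond whose degree-$2$ vertices $z,p$ attach to the rest via outside neighbours $q := z'$ and $r$. Here I split further: if $q \neq r$ and $qr \notin E(G)$, then R(ii) contracts the diamond cleanly; if $qr \in E(G)$, then R(i) on the edge $qr$ is clean, because the diamond saturates $x$ and $y$, forcing the two freshly added edges to avoid all existing ones; and if $q = r$, the five vertices $\{x,y,z,p,q\}$ form exactly a subdivided $K_4$ attached by the bridge $qq'$, so R(iii) applies.

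The main obstacle, and the step I expect to require the most care, is the last contingency, where R(iii) is blocked because suppressing the far bridge-end $q'$ would create a parallel edge, that is, $q'$ is itself a triangle apex. The key observation that prevents an unbounded regress is that the attachment vertex $q$ is already \emph{saturated} inside the gadget, with neighbours $z,p,q'$, so it can never again serve as a coinciding outside neighbour. Tracing the triangle on $q'$ one step outward therefore lands in a configuration where either the three relevant outside neighbours are distinct, giving a clean R(i), or a fresh diamond appears whose two attachment vertices cannot both collapse onto $q$; in every such terminal configuration one of R(i) or R(ii) is clean. Assembling these cases yields the Main Lemma and with it the theorem, the bulk of the remaining labour being the exhaustive verification that in each branch no parallel edge, loop, or disconnection survives.
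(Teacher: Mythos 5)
The paper gives no proof of this statement at all: it is quoted as a known theorem and attributed to Wormald's 1979 paper, so there is nothing in-paper to compare your argument against. Your blind attempt is, however, a correct and essentially complete proof, and the case analysis is exhaustive. A triangle-free graph admits a clean R(i) on any cycle edge; a triangle whose three outside neighbours are pairwise distinct admits a clean R(i) on one of its edges; all three outside neighbours coinciding forces $G = K_4$; and the doubled-neighbour case produces an induced diamond, where your three-way split ($q \neq r$ with $qr \notin E(G)$, $qr \in E(G)$, and $q = r$) is exactly right---in particular the saturation of $x$ and $y$ really does make R(i) on $qr$ clean, and $q = r$ really does yield a subdivided $K_4$ hanging off a bridge. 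Your regress-stopping argument in the last case also checks out: in the triangle $q'st$ that blocks R(iii), the third neighbour of $q'$ is the saturated vertex $q$, so $q$ cannot coincide with the third neighbours $s_1, t_1$ of $s$ and $t$; hence either all three are distinct (clean R(i)) or $s_1 = t_1 = p^*$, giving a fresh diamond with attachment vertices $q$ and $r^*$. One point you state too loosely: for R(ii) to be clean there you need not only $q \neq r^*$ (which is all that ``cannot both collapse onto $q$'' addresses) but also $qr^* \notin E(G)$; fortunately the same saturation observation supplies this, since $qr^* \in E(G)$ would force $r^* \in \{z, p, q'\}$, and none of $z$, $p$, $q'$ can be adjacent to $p^*$ ($z$ and $p$ have neighbourhood $\{x,y,q\}$, and $q'$ already has its three neighbours $q,s,t$). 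With that line added, every branch terminates in a clean reduction, and each reduction visibly inverts the corresponding operation, strictly decreases the order, and preserves simplicity, connectivity, and $3$-regularity, so the theorem follows. In short, your proof stands on its own where the paper simply cites the literature.
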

The HIST-extension conjecture postulates that is is possible to replace the~$K_4$ in Wormald's theorem by the substantially larger infinite ground set of all cubic graphs which allow for a homeomorphically irreducible spanning tree but, in turn, to restrict Wormald's three operations to two coloured variants of the operations~\eqref{itm: Tuttestep} and~\eqref{itm: wormald diamond}.

We derive from~\Cref{thm: reducible or HIST} the equivalence of the two conjectures.

\begin{restatable}{corollary}{equivalentconjectures}
	The 3-decomposition conjecture and the HIST-extension conjecture are equivalent.
\end{restatable}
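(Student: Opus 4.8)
The plan is to prove the two implications separately, each time leaning on \Cref{thm: reducible or HIST}, which already does essentially all of the work; the corollary then amounts to reading that theorem in both directions and recording two easy structural observations. Throughout I would work with coloured cubic graphs (exactly the tree edges green) and use that the reductions and extensions of \Cref{fig: ExtensionOfDecomposition} are, by definition, mutually inverse operations on such coloured graphs.

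First I would show that the 3-decomposition conjecture implies the HIST-extension conjecture. Given a finite connected cubic graph $G$, the 3-decomposition conjecture furnishes a 3-decomposition $(T,C,M)$; colouring exactly the edges of $T$ green, the final (``in particular'') statement of \Cref{thm: reducible or HIST} yields a finite sequence of Tutte- and diamond-reductions carrying $(T,C,M)$ to a 3-decomposition $(T',C',M')$ of a smaller cubic graph $G'$ whose spanning tree $T'$ is a HIST (so that $M'=\emptyset$ by the same theorem). Since the extensions are the inverses of the reductions, reading this sequence backwards exhibits $G$ as the result of finitely many Tutte- and diamond-extensions applied to $G'$, which is exactly the conclusion demanded by the HIST-extension conjecture.

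For the converse I would argue that the HIST-extension conjecture implies the 3-decomposition conjecture. Let $G$ be a finite connected cubic graph and let $G'$ with HIST $T'$ be as supplied by the conjecture. The key initial observation is that a HIST of a cubic graph automatically induces a 3-decomposition with empty matching: since a HIST has no degree-2 vertices, every vertex has tree-degree $1$ or $3$, so the non-tree edges form a $2$-regular graph and $\big(T',\,E(G')\setminus E(T'),\,\emptyset\big)$ is a 3-decomposition of $G'$ in which exactly the tree edges are green. It then remains to note that each Tutte- and each diamond-extension transforms such a decomposition into another one of the same form — the green edges stay a spanning tree, the untouched part stays $2$-regular, and the two new black edges extend the matching — so that the extension sequence producing $G$ also transports the decomposition of $G'$ to a 3-decomposition of $G$. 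Proving both implications, each ranging over all finite connected cubic graphs, then gives the equivalence.

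The main obstacle is not conceptual but a matter of bookkeeping: one must check that the extensions genuinely preserve the full decomposition structure (spanning tree, $2$-regular graph, matching) and that cubicity and connectivity are maintained along the way. These incidences are, however, already implicit in \Cref{thm: reducible or HIST} and visible in \Cref{fig: ExtensionOfDecomposition}, so in the write-up I would point to the figure rather than re-derive them vertex by vertex. The one point I would state explicitly is that reduction and extension are inverse operations on coloured cubic graphs, as this is what legitimises reversing the reduction sequence in the first implication.
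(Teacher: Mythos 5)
Your proposal is correct and follows essentially the same route as the paper: the forward implication invokes \Cref{thm: reducible or HIST} and reverses the reduction sequence, while the converse observes that a HIST yields a 3-decomposition with empty matching and that Tutte- and diamond-extensions preserve 3-decompositions (the paper packages this last fact as \Cref{3-dec-preserved}, which you essentially re-derive).
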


\paragraph{Uncoloured reductions.}
A subcubic graph $S$ is a \emph{reducible configuration} if no minimal 3-connected counterexample to the 3-decomposition conjecture contains $S$ as a subgraph.
We show that the following six configurations are reducible: 
the triangle, the $K_{2,3}$, the \petv, the \emph{claw-square}, the \emph{twin-house}, and the \emph{domino}.
\begin{restatable}{theorem}{reduciblegraphs}
	\label{reducible-graphs}
	The graphs in \Cref{fig:reducible-graphs} are reducible.
\end{restatable}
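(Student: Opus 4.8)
Fix one of the six configurations $S$ of \Cref{fig:reducible-graphs} and suppose, for contradiction, that some minimal 3-connected counterexample $G$ to the 3-decomposition conjecture contains $S$ as a subgraph. For each $S$ the plan is to perform a local surgery that turns $G$ into a strictly smaller cubic graph $G'$ and then to lift a 3-decomposition of $G'$ back to one of $G$, contradicting the choice of $G$. The point is that each configuration meets the rest of $G$ in only a bounded number of edges — three for the triangle, the $K_{2,3}$, and the \petv, and four for the claw-square, the twin-house, and the domino (these are exactly the edges incident to the vertices of $S$ that are not yet of degree three inside $S$). Thus $S$ behaves like a generalised vertex with a fixed \emph{interface}. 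The reduction replaces $S$ by a small gadget with the same interface: the three-edge configurations are contracted to a single new cubic vertex incident to the three outgoing edges, while the four-edge configurations are replaced either by a gadget on two adjacent vertices, each picking up two of the outgoing edges, or by pairing the four outgoing edges into two edges directly, the choice being made to keep $G'$ cubic.

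Before lifting I would check that $G'$ is again a legitimate input to the minimality hypothesis, that is, cubic, simple, 3-connected, and smaller than $G$. Since $G$ is 3-connected and the surgery is local, the only failure modes are that a contracted configuration produces a loop or parallel edges, or that the interface becomes a small edge-cut so that $G'$ loses 3-connectivity. I would rule these out using the 3-connectivity of $G$: a loop or multi-edge at the contracted vertex, or a $2$-cut in $G'$, pulls back to a separator of size at most two in $G$, which is impossible — with the exception of a few degenerate placements of $S$ (for instance two outgoing edges sharing an endpoint outside $S$), which are treated by hand or by switching to an alternative pairing of the interface. Where a contraction genuinely destroys 3-connectivity, I would instead argue directly across the small separator or fall back to a second reduction with a different interface pairing.

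With $G'$ cubic, simple, 3-connected, and smaller than $G$, minimality yields a 3-decomposition $(T', C', M')$ of $G'$. The heart of the proof is the \emph{extension step}: I would read off the \emph{interface state} of $(T', C', M')$ — for each outgoing edge, whether it lies in the spanning tree, in the 2-regular part, or in the matching — and, for each of the finitely many admissible states, assign the internal edges of $S$ to $T$, $C$, and $M$ so that $(T, C, M)$ is a valid 3-decomposition of $G$. At every vertex of $S$ this must respect the local partition (the tree-, cycle-, and matching-edges summing to degree three), the reinserted cycle-edges must extend $C'$ to a 2-regular graph, the reinserted matching-edges must keep $M$ a matching, and — the genuinely delicate requirement — the reinserted tree-edges must turn $T'$ back into a spanning tree of $G$, reconnecting the interior of $S$ without creating a cycle. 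Because contracting the connected configuration collapses its spanning subtree to a point, connectivity and acyclicity of $T$ are exactly the conditions that can fail for a bad interface state, and showing that every admissible state admits a completion is the main obstacle.

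Finally, I expect this per-state analysis to be large but mechanical, particularly for the \petv, the domino, and the twin-house, where the numbers of internal edges and interface states are substantial. I would therefore organise the extension as an exhaustive enumeration over interface states and candidate internal assignments — the naïve search that the paper later automates — and establish reducibility of each $S$ by exhibiting, for every admissible state, a completion, or, when no single reduction covers all states, a finite menu of reductions that jointly do. The only non-mechanical ingredients are the 3-connectivity bookkeeping for $G'$ and the verification that the reinserted tree-edges preserve the spanning-tree property; once these are in place, the per-state completions close each of the six cases, and the resulting 3-decomposition of $G$ contradicts the assumption that $G$ is a counterexample.
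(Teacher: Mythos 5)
Your overall strategy is the same as the paper's: replace each configuration by a small gadget with the same interface, verify that the reduced graph is cubic, simple, 3-connected, and smaller, invoke minimality to get a 3-decomposition, and lift it back by a case analysis on how the decomposition meets the interface. The paper formalises this as $(X,Y)$-transformations, proves the 3-connectivity bookkeeping once and for all (for three-interface configurations reduced to a vertex and four-interface ones reduced to a square), and automates the per-state search — exactly the "mechanical enumeration" you describe.

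The gap is in the step you flag as "the main obstacle" but then assume away. It is \emph{not} true that every admissible interface state admits a completion determined by the state alone, even after trying a menu of reductions. For the \petv, the twin-house, and the domino there are specific local behaviours for which no static assignment of the internal edges works: whether a candidate assignment yields a spanning tree depends on global information about $T'$, namely which components of $T'$ minus certain tree-edges contain which interface vertices. The paper handles these by choosing the completion as a function of that component structure (twin-house, \petv), and for the domino the single bad behaviour requires a genuinely non-mechanical argument: one first tries to reduce the domino to an edge, and only if that destroys 3-connectivity does one reduce to a square — and then the failure of 3-connectivity of the edge-reduction is itself used to show that the two fundamental cycles closed by the interface edges are disjoint, which forces the bad behaviour to be extendable after all (Lemma~\ref{3-compatible-square-domino}). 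Your fallback of "a finite menu of reductions with different interface pairings" does not capture this interplay, and your proposed gadgets for the four-interface configurations (two adjacent vertices, or pairing the interface into two bare edges) are more fragile than the paper's square with respect to simplicity and 3-connectivity; the square is chosen precisely so that Lemma~\ref{square-reductions} goes through uniformly. So the skeleton is right, but the cases that make the theorem hard are exactly the ones your proposal leaves unresolved.
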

Roughly speaking, the reducibility proofs proceed as follows:
given a cubic graph $G$ containing the reducible configuration $S$, replace $S$ by some smaller graph $S'$.
This yields a new graph $G'$ containing $S'$ and we show that every local behaviour that a 3-decomposition might exhibit on $S'$ can be extended to $S$.
Consequently, these proofs are constructive: we can obtain a 3-decomposition of a graph $G$ containing $S$ from a 3-decomposition of the graph $G'$.

\begin{figure}[htb]
	\centering
	\begin{tikzpicture}[scale=.75]
		\drawTriangle{-8.6}{-0.1}{1}{tr}{{black,black,black,black,black,black}}{black}{black!50}
		\node[] at (-8.6,-1.5) {triangle};
		
		\drawKTwoThree{-5.75}{-0.1}{1}{k33}{{black,black,black,black,black,black,black,black,black}}{black}{black!50}
		\node[] at (-5.75,-1.5) {$K_{2,3}$};
		
		\drawPetMinusV{-2.5}{-.08}{0.5}{pet}{{black,black,black,black,black,black,black,black,black,black,black,black,black,black,black}}{black}{black!50}
		\node[] at (-2.5,-1.5) {\petv};
		
		\drawGFive{0.25}{.05}{0.75}{g5}{{black,black,black,black,black,black,black,black,black,black,black,black,black,black}}{black}{black!50}
		\node[] at (.45,-1.5) {claw-square};
		
		\drawGFour{2.9}{0}{1}{g4}{{black,black,black,black,black,black,black,black,black,black,black}}{black}{black!50}
		\node[] at (2.9,-1.5) {twin-house};
		
		\drawDomino{5.3}{0}{1}{g3}{{black,black,black,black,black,black,black,black,black,black,black}}{black}{black!50}
		\node[] at (5.3,-1.5) {domino};
	\end{tikzpicture}
	\caption{Reducible configurations for the 3-decomposition conjecture.
	The names refer to the graphs induced by the black vertices.
	\label{fig:reducible-graphs}}
\end{figure}
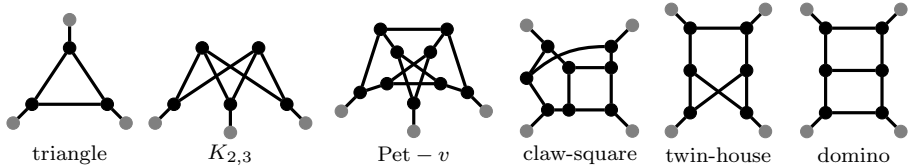

\paragraph{Applications and minimum counterexamples.}
Using the new reducible configurations enables us to extend the list of graph classes which satisfy the 3-decomposition conjecture.
Due to the constructive nature of the reducibility proofs, we obtain a method of obtaining decompositions for these graphs, too.
\begin{restatable}{theorem}{twthree}
	\label{tw-three}
	Every 3-connected cubic graph of tree-width~3 satisfies the 3-decomposition conjecture.%
	\ifnum\value{page}<10%
		\footnote{This result is contained in the PhD thesis of the second author~\cite{Heinrich2019, Heinrich2020} but is not published elsewhere.\label{diss}}%
	\else%
		\textsuperscript{\ref{diss}}
	\fi%
\end{restatable}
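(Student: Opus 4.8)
The overall strategy is the reduction approach~(A): I would combine the reducibility of the six configurations (\Cref{reducible-graphs}) with an \emph{unavoidability} statement, namely that every sufficiently large graph in the class contains one of them. Note first that a graph of tree-width at most~$2$ has a vertex of degree at most~$2$, so a cubic graph always has $\tw(G) \ge 3$; thus tree-width~$3$ is the smallest value the class can attain and there is no hope of reducing below it. The plan is therefore to argue by contradiction: assume the set of $3$-connected cubic graphs of tree-width~$3$ violating the conjecture is non-empty and let $G$ be one of minimum order. I will derive a contradiction by showing that $G$ contains a reducible configuration, so that the constructive reduction of \Cref{reducible-graphs} produces a strictly smaller graph in the same class whose $3$-decomposition lifts back to $G$.

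To make minimality usable, I would first verify that each of the six reductions keeps us inside the class and strictly decreases the order. Cubicity and $3$-connectivity are preserved by the very definition of a reducible configuration, so the essential extra point is that the tree-width does not rise above~$3$. For the reductions acting purely by deletion and contraction within the replaced configuration this is immediate from minor-monotonicity of tree-width; the remaining ones can be checked to retain a width-$3$ tree decomposition by locally editing the few bags that meet the configuration. Since the reduced graph $G'$ is again $3$-connected and cubic, $\tw(G') \ge 3$, hence $\tw(G') = 3$ and $G'$ lies in the class. By minimality $G'$ satisfies the conjecture and the decomposition extends to $G$, a contradiction; consequently \textbf{$G$ contains none of the six configurations of \Cref{fig:reducible-graphs}}. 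In particular $G$ is triangle-free and $K_{2,3}$-free.

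The heart of the argument — and the step I expect to be the main obstacle — is to show that a $3$-connected cubic graph of tree-width~$3$ avoiding all six configurations has bounded order. Since $\tw(G) \le 3$, the graph admits a balanced separator of size at most~$3$; if $G$ is large this yields a \emph{non-trivial} $3$-separator $S = \{a,b,c\}$ with both sides of size at least~$2$. Choosing such a separator so that one side $A$ is of minimum cardinality, the piece $G[A \cup S]$ is a small cubic fragment with no internal $3$-separator, attached to the rest of $G$ through at most nine edges incident with $a,b,c$. The crux is then a finite case analysis on how $A$ hangs off $\{a,b,c\}$: using cubicity together with the absence of triangles and of $K_{2,3}$ to control short cycles and common neighbours, one shows that every non-trivial such fragment already realises the claw-square, the twin-house, the domino, or the \petv{} inside $G$. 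Hence no non-trivial $3$-separator can exist, so $G$ is cyclically $4$-edge-connected; a cubic graph with this property and tree-width~$3$ must be one of finitely many small graphs.

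These finitely many exceptional graphs are of bounded order and are disposed of directly, or by the computational verification (established later in the paper) that all cubic graphs of order at most~$20$ satisfy the conjecture. Since each of them admits a $3$-decomposition, none is a counterexample, contradicting the choice of $G$ and completing the proof. The delicate points will be the tree-decomposition bookkeeping — in particular arguing that the minimum side $A$ is genuinely of bounded size rather than merely separated — and organising the case analysis so that every attachment pattern of $A$ to $\{a,b,c\}$ is covered and pinned down to one of the four larger configurations; the reducibility of the configurations themselves is already supplied by \Cref{reducible-graphs} and only needs to be invoked.
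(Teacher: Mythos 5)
Your reduction framework is the right one and matches the paper's: take a minimum counterexample $H$ in the class, find a reducible configuration in it, observe that the corresponding reduction is a minor operation (hence preserves tree-width at most~3) and preserves cubicity and 3-connectivity, and lift the 3-decomposition back. However, the entire mathematical content of the theorem lies in the unavoidability statement, and that is where your proposal has a genuine gap: you never actually prove that a 3-connected cubic graph of tree-width~3 avoiding the reducible configurations is small. You reduce this to two claims --- that every non-trivial minimum-side fragment over a 3-separator realises one of the larger configurations, and that cyclically 4-edge-connected cubic graphs of tree-width~3 form a finite (and explicitly bounded, so that the order-$\le 20$ computation applies) list --- and you label both as ``the crux'' and ``the main obstacle'' without carrying them out. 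Neither is self-evident: choosing the side $A$ of minimum cardinality does not by itself bound $|A|$, and the finiteness of the cyclically 4-edge-connected case is essentially equivalent to the unavoidability lemma you are trying to prove, so as written the argument is close to circular. A smaller slip: tree-width~3 gives balanced separators of size $4$ (bag size), not $3$; the size-$3$ separators you want come from the adhesion sets of a smooth decomposition, which need not be balanced.

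For comparison, the paper closes this gap with \Cref{lem: unavoidable set for tw 3}: \emph{every} cubic graph of tree-width at most~3 (on more than six vertices) contains a triangle, a $K_{2,3}$, or a domino. The proof takes a smooth tree-decomposition of width~3 (bags of size four, adjacent bags sharing three vertices), roots the decomposition tree, picks a leaf bag at maximum distance from the root, and runs a short case analysis on that leaf's siblings and parent; cubicity forces the ``new'' vertex of the leaf bag to have all three neighbours inside the bag, and each branch of the analysis produces one of the three subgraphs. Note that only three of the six configurations are needed for tree-width~3 (the claw-square, twin-house, and \petv{} enter only for the path-width~4 result), whereas your sketch invokes the larger configurations without establishing that they must appear. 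To complete your proof you would need to supply an argument of comparable substance to that lemma; as it stands, the step that makes the theorem true is asserted rather than proved.
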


\begin{restatable}{theorem}{pwfour}
	\label{pw-four}
	Every 3-connected cubic graph of path-width~4 satisfies the 3-decomposition conjecture.
\end{restatable}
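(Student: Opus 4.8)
The plan is to argue by contradiction, combining the reducible configurations of \Cref{reducible-graphs} with the linear structure provided by a path decomposition of width~$4$. Suppose the statement fails and let $G$ be a counterexample with the fewest vertices among all $3$-connected cubic graphs of path-width at most~$4$ that violate the $3$-decomposition conjecture. Before exploiting the structure of $G$, I would first establish an auxiliary lemma: each of the reductions associated with the configurations in \Cref{reducible-graphs} (contracting the triangle, suppressing the $K_{2,3}$, and so on) turns $G$ into a strictly smaller graph $G'$ that is again cubic, $3$-connected, and of path-width at most~$4$. Since all these reductions are of minor type, path-width cannot increase; the delicate point is preservation of $3$-connectivity, which I would handle by treating any small cut created by a reduction separately and re-gluing the pieces. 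Granting this lemma, the constructive (contrapositive) form of \Cref{reducible-graphs} shows that a $3$-decomposition of $G'$ lifts to one of $G$; hence by minimality $G$ contains none of the six configurations, and in particular $G$ is triangle-free and $K_{2,3}$-free.

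Next I would exploit the path decomposition. Fix a nice path decomposition $(B_1,\dots,B_r)$ of width~$4$, so that every bag has at most five vertices and every intersection $S_i = B_i \cap B_{i+1}$ is a separator of $G$ of size at most~$4$. Because $G$ is $3$-connected we have $3 \le |S_i| \le 4$ for every nontrivial separation induced this way. Scanning the decomposition from the left, I would pick the separation $(A,B)$ with $A\cap B = S$ of smallest order whose left side $A$ is inclusion-minimal and nontrivial, so that every vertex of $A\setminus S$ has all of its neighbours inside $A$, while the only edges leaving $A$ emanate from the three or four vertices of $S$. The induced subgraph $G[A]$ is then a subcubic graph in which only the vertices of $S$ may have degree below three, and a parity count on $\sum_{v\in A}\deg_{G[A]}(v)$ pins down the number of boundary edges.

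The heart of the argument is a case analysis on $|S|\in\{3,4\}$ and on the (necessarily small) structure of the end piece $G[A]$. Using triangle-freeness, the absence of a $K_{2,3}$, a \petv, a claw-square, a twin-house, and a domino, together with $3$-connectivity, I would enumerate the possible subcubic graphs that can be cut off by a boundary of three or four edges and that avoid all six configurations. I expect the outcome to be that every such end piece either already contains one of the forbidden configurations---contradicting the previous paragraph---or is so restricted that the whole of $G$ has tree-width at most~$3$, in which case \Cref{tw-three} supplies a $3$-decomposition and contradicts the choice of $G$; the genuinely small residual graphs can be dispatched directly or by the computational verification for order at most~$20$.

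The main obstacle is this enumeration for the size-$4$ separators: unlike the tree-width~$3$ situation, where a single $3$-separator already tightly constrains the end piece, a $4$-edge boundary admits more ways to close off $G[A]$, and ruling each of them out requires carefully combining the girth restrictions (all short cycles being induced) with the forbidden claw-square, twin-house, and domino. A secondary obstacle, logically prior, is the preservation lemma for $3$-connectivity under the reductions, since a reduction can a priori introduce a cut vertex or a $2$-cut; handling these degenerate cases so that the minimal-counterexample scheme goes through is where most of the bookkeeping will lie.
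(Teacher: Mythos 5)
Your overall strategy coincides with the paper's: take a minimum 3-connected counterexample of path-width at most~4, show that one of the six reducible configurations must appear, and use the fact that the corresponding reductions are minors (hence path-width non-increasing) and 3-connectivity-preserving to contradict minimality. The paper handles your ``logically prior'' obstacle exactly as you suspect it must be handled: \Cref{node-reductions} and \Cref{square-reductions} show that reducing a configuration with three boundary edges to a $K_1$, or one with four boundary edges to a square, preserves simplicity and 3-connectivity in any 3-connected cubic host; the one genuinely delicate case is the domino, whose reduction to an edge can destroy 3-connectivity, and the paper's proof of \Cref{tw-three}/\Cref{pw-four} explicitly falls back on the square reduction (via \Cref{3-compatible-square-domino}) in that situation. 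So that part of your plan is sound and matches the paper.

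The genuine gap is that the heart of the proof --- the unavoidability statement --- is only announced, not proved. The paper's \Cref{lem: unavoidable set for pw 4} asserts that \emph{every} cubic graph of path-width at most~4 contains a triangle, a $K_{2,3}$, a domino, a twin-house, or a claw-square, and its proof is a substantial explicit case analysis: one fixes a smooth path-decomposition (all bags of size~5, consecutive bags sharing~4 vertices), tracks which vertex $w_i$ leaves each bag, and uses the fact that all neighbours of a departed vertex lie in earlier bags to branch on the possible adjacencies, closing every branch with one of the five configurations (see \Cref{fig:unavoidable set for pw 4}). Your proposal replaces this with ``I expect the outcome to be that every such end piece either already contains one of the forbidden configurations \dots or is so restricted that the whole of $G$ has tree-width at most~3,'' which is precisely the claim that needs proving; note also that analysing a single end piece $G[A]$ cannot by itself bound the tree-width of all of $G$, so that escape route does not work as stated. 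Until the enumeration over $|S|\in\{3,4\}$ is actually carried out and every branch is shown to produce one of the six configurations, the proof is incomplete: everything else in your outline is scaffolding around this missing case analysis. A further small remark: the paper proves unavoidability unconditionally (no 3-connectivity, no minimality needed), which cleanly decouples the combinatorial lemma from the reduction machinery; your version entangles the two, which is not wrong but makes the bookkeeping you are worried about strictly harder.
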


As a further use of the new reducible configurations, we gain new insights into the structure of (potential) minimum counterexamples to the 3-decomposition conjecture.
These results are summarised in the following theorem.
\begin{restatable}{theorem}{propertiescounterexample}
	\label{thm: min counterexample}
	If $G$ is a minimum counterexample to the 3-decomposition conjecture amongst all 3-connected cubic graphs, then it does not contain any of the graphs in Figure~\ref{fig:reducible-graphs} as a subgraph. 
	In particular,
	\begin{enumerate}[(i)]
		\item the girth of $G$ is at least~4, \label{itm: trianglefree}
		\item every cycle of length 4, 5, or 6 in $G$ is induced, and \label{itm: induced cycles}
		\item every edge of $G$ is the centre edge of an induced $P_6$. \label{itm: P6}
	\end{enumerate}
\end{restatable}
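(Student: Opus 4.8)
The first assertion is immediate from \Cref{reducible-graphs}: each of the six graphs in \Cref{fig:reducible-graphs} is reducible, so by definition the minimum $3$-connected counterexample $G$ contains none of them as a subgraph. For the rest I use only that $G$ is cubic and contains no triangle, $K_{2,3}$, domino, twin-house, claw-square, or \petv. Claim (i) is then immediate, since a triangle would be a forbidden subgraph. For (ii), let $C$ be a cycle of length $\ell\in\{4,5,6\}$ with a chord $e$. If the ends of $e$ lie at distance $2$ on $C$, then $e$ closes a triangle, contradicting (i); as every chord of a $4$- or $5$-cycle is of this type, the cases $\ell\in\{4,5\}$ are done. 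For $\ell=6$ the one remaining possibility is a chord $v_1v_4$ of $C=v_1\cdots v_6$ joining antipodal vertices; then the two $4$-cycles $v_1v_2v_3v_4$ and $v_1v_6v_5v_4$ share the edge $v_1v_4$, which is exactly a domino. Thus $C$ has no chord.

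For (iii), fix an edge $uv$ and write $N(u)=\{v,x_1,x_2\}$ and $N(v)=\{u,y_1,y_2\}$. By (i) these six vertices are distinct with $x_1\not\sim x_2$ and $y_1\not\sim y_2$, and the adjacencies between $\{x_1,x_2\}$ and $\{y_1,y_2\}$ form at most a matching: were $y_1$ adjacent to both $x_1$ and $x_2$, the vertices $u,y_1$ would share the three neighbours $x_1,x_2,v$, a $K_{2,3}$. Hence some pair $x\in\{x_1,x_2\}$, $y\in\{y_1,y_2\}$ is non-adjacent, giving an induced $P_4$ on $x\,u\,v\,y$. Assume, for contradiction, that $uv$ is the centre of no induced $P_6$; the plan is to extend one of these paths and to show that every obstruction embeds a forbidden configuration.

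Call $p\in N(x)\setminus\{u\}$ \emph{admissible} if $p\not\sim v$ and $p\not\sim y$ (so that $p\,x\,u\,v\,y$ is an induced $P_5$), and symmetrically $q\in N(y)\setminus\{v\}$ admissible if $q\not\sim u$ and $q\not\sim x$; using (i) one checks that $p\,x\,u\,v\,y\,q$ is an induced $P_6$ exactly when $p,q$ are admissible with $p\neq q$ and $p\not\sim q$. The one-sided blocks are handled directly. If the two neighbours of $x$ other than $u$ are both adjacent to $v$, then $x$ and $v$ have three common neighbours (these two and $u$), a $K_{2,3}$; if both are adjacent to $y$, then $x$ and $y$ are joined by two internally disjoint paths of length $2$ together with the path $x\,u\,v\,y$, i.e.\ a twin-house. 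The same holds with the roles of $x$ and $y$, and of $u$ and $v$, exchanged.

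What remains, and what I expect to be the crux, are the mixed blocks (one neighbour of $x$ adjacent to $v$, the other to $y$) and the interference blocks (admissible vertices exist on both sides, but every admissible pair coincides or is adjacent). Here I would use the freedom to switch between $x_1,x_2$ and between $y_1,y_2$: requiring the extension to fail simultaneously for all induced $P_4$'s through $uv$ forces many edges among $u$, $v$, and their first and second neighbours. The task is to show that, subject to girth at least $4$ and the already-excluded $K_{2,3}$, twin-house, and domino, these forced adjacencies always close up into one of the two larger configurations --- a claw-square when a $4$-cycle appears among the second neighbours, and a \petv\ in the girth-$5$ regime where the second neighbourhood is forced to spread out as in the Petersen graph. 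The main difficulty is precisely this exhaustive bookkeeping: combining the obstructions to all the $P_4$'s and certifying that the six graphs of \Cref{fig:reducible-graphs} are exactly what is needed, while checking that no smaller forbidden subgraph is created first.
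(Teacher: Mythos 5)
Your treatment of the first assertion and of parts (i) and (ii) is correct and matches the paper: reducibility of the six configurations gives the subgraph exclusions, a chord of a $4$- or $5$-cycle closes a triangle, and the antipodal chord of a $6$-cycle produces a domino. Your preliminary observations for (iii) are also sound --- the induced $P_4$ through $uv$ exists by excluding the triangle and $K_{2,3}$, and your two ``one-sided blocks'' do embed a $K_{2,3}$ and a twin-house respectively.

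However, for part (iii) the proposal is a plan, not a proof. You explicitly stop at what you yourself call the crux: the ``mixed'' and ``interference'' blocks, where admissible extensions exist on both sides of the $P_4$ but every admissible pair coincides or is adjacent, and where the claw-square and \petv{} must be extracted. You state that ``the task is to show'' the forced adjacencies close up into a forbidden configuration and that ``the main difficulty is precisely this exhaustive bookkeeping,'' but you do not carry out that bookkeeping. This case analysis is the bulk of the paper's argument: the paper organises it into three claims (first that any edge lying on a $6$-cycle is the centre of an induced $P_6$, then that any edge that is the middle of a not-necessarily-induced $P_6$ but on no $6$-cycle is the centre of an induced $P_6$, and finally that some $P_6$ with middle edge $wx$ always exists), and within each claim it tracks exactly which of the six exclusions kills each potential chord --- including the delicate subcases where the claw-square and the \petv{} are invoked. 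None of that work appears in your write-up, and it is not routine: whether your alternative organisation around ``all induced $P_4$'s through $uv$'' actually terminates in only the listed configurations is precisely what would need to be verified. As it stands, the proof of (iii) has a genuine gap.
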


\paragraph{Automatically checking reducibility}
At the core of our reducibility proofs lies the aforementioned task of checking that \emph{every} behaviour which a 3-decomposition might exhibit on the smaller graph $S'$ can be extended to $S$.
This leads to a lot of cases, many of which are straight-forward.
We identify these \emph{na\"ively extendable} cases and, in the final part of this article, we demonstrate how these can be handled algorithmically in order to focus on the remaining harder cases.
We also prove that checking for na\"ive extendability is an \NP-complete problem.

\begin{restatable}{theorem}{npcomplete}
	\label{naive-extensions-np-complete}
	Determining whether a forest is na\"ively extendable is \NP-complete.
\end{restatable}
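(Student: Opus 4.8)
The plan is to establish the two directions of \NP-completeness separately. Membership in \NP\ is the routine direction: a na\"ive extension of a forest is a finite combinatorial object (a completion of the partial green/black colouring together with the induced assignment of the remaining edges to the cycle-part and to the matching), and it can be encoded in size polynomial in the input. Given such a certificate, one checks in polynomial time that it satisfies the local constraints defining na\"ive extendability --- that the green edges meet the prescribed tree-degree conditions at every vertex, that the non-green edges split correctly into a $2$-regular part and a matching, and that each forced step of the na\"ive procedure was respected. Hence the problem lies in \NP, and the entire difficulty is concentrated in the hardness proof.

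For hardness I would reduce from \threesat. Given a 3-CNF formula $\varphi$ with variables $x_1,\dots,x_n$ and clauses $c_1,\dots,c_m$, I would assemble a forest $F_\varphi$ out of three kinds of gadgets --- variable gadgets, clause gadgets, and wires joining them --- whose shapes are dictated by exactly which local colour/degree configurations the na\"ive extension rules do and do not permit. Each variable gadget should admit precisely two na\"ive extensions, corresponding to the truth values \emph{true} and \emph{false}, and the wires should propagate the chosen value (and its negation) to every clause in which the variable occurs. Each clause gadget should then be na\"ively extendable if and only if at least one of its three incoming literals carries the satisfying value. Consequently $F_\varphi$ is na\"ively extendable if and only if there is a consistent truth assignment satisfying every clause, i.e.\ if and only if $\varphi$ is satisfiable. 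Since $F_\varphi$ has size polynomial in $|\varphi|$ and is constructible in polynomial time, this gives the required reduction.

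The main obstacle, and the part demanding genuine care, is the gadget construction, because two requirements pull against each other: the instance must be a \emph{forest} (hence acyclic), yet the consistency of a Boolean assignment is most naturally enforced by cycles. I would therefore have to force consistency entirely through the forced propagation of the na\"ive extension rules rather than through the topology of $F_\varphi$. In particular, for each gadget in isolation and then for their composition I must verify that the set of na\"ive extensions is \emph{exactly} the intended one: no spurious extension that cheats the encoding, and no legitimate assignment that fails to extend. Establishing this two-sided correspondence, rather than a mere one-directional implication, is where the bulk of the case analysis lives, and it is tightly coupled to the precise definition of na\"ive extendability; accordingly I would first fix a small library of local configurations whose extendability is completely understood, and build every gadget only from these certified building blocks.
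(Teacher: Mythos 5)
Your high-level strategy coincides with the paper's (membership in \NP{} is routine; hardness via a reduction from \threesat{} using variable gadgets, clause gadgets, and connecting wires), but the proposal stops exactly where the proof begins: no gadget is actually constructed, and you yourself concede that ``the bulk of the case analysis lives'' in that construction. As it stands there is no reduction, only the announcement that one should exist, so the hardness direction is a genuine gap. For reference, the paper builds the following concrete pieces: a \emph{forced tree-edge} gadget (subdivide an edge and attach a pendant outer vertex whose $f$-value is \texttt{m}, which forces both subdivision edges into the tree); a variable gadget consisting of two internally disjoint paths between $s_i$ and $t_i$ (an ``upper'' and a ``lower'' path, partitioned into blocks, one per occurrence of the literal) tied together by forced tree-edges; and a clause gadget that is a single vertex joined by forced tree-edges to the blocks of its three literals. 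Two outer vertices with $f$-value \texttt{c} at $s_1$ and $t_n$ force the cycle part to contain an $s_1$--$t_n$ path that must choose the upper or the lower path in each variable gadget, which is what encodes the truth value; a clause whose three literals are all falsified then produces a tree component with no outer vertex, violating 3-consistency.

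A second, more conceptual issue: you identify the ``main obstacle'' as the tension between the instance being a forest (hence acyclic) and the need for cycles to enforce consistency. This misreads where the combinatorial search takes place. The input forest is only $T_X$, the prescribed local behaviour on the small template graph $X$; the object one searches for is a 3-consistent forest $T_Y$ inside the template graph $Y$, and $Y$ is an arbitrary graph whose inner vertices have degree~3 --- it may, and in the paper's construction does, contain cycles (each variable gadget is a pair of parallel paths). Consistency of the truth assignment is therefore not threatened by acyclicity of the instance at all; it is enforced by the definition of 3-consistency itself, namely that every component of $T_Y$ must contain an outer vertex and that $Y - E(T_Y)$ must decompose into single vertices, single edges, cycles, and paths joining outer vertices. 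Any corrected write-up should build its gadgets around these two constraints rather than around the acyclicity of the input.
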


Finally, we verify the conjecture for all small graphs employing a computer.
\begin{restatable}{theorem}{smallgraphs}
	\label{small-graphs-3-dec}
	All graphs of order at most 20 satisfy the 3-decomposition conjecture.
	In particular, a minimum counterexample has order at least~22.
\end{restatable}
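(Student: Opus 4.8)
The plan is to turn the statement into a finite computer search and to make that search tractable by combining the reducible configurations of \Cref{reducible-graphs} with the structural restrictions of \Cref{thm: min counterexample}. I would begin with the elementary parity remark that explains the jump from~$20$ to~$22$: a cubic graph on $n$ vertices has $3n/2$ edges, so $n$ is even and there is no cubic graph of order~$21$; hence, once the conjecture is confirmed up to order~$20$, the smallest possible order of a counterexample is~$22$. Throughout, ``graph'' means connected cubic graph, since these are the only graphs the conjecture concerns.

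Next I would reduce to the $3$-connected case. A connected cubic graph that is not $3$-connected contains a bridge or a $2$-edge-cut, and in each case one can cut along the separator, cap the two sides off into strictly smaller cubic graphs, and reassemble a $3$-decomposition from $3$-decompositions of the pieces; this is precisely the mechanism underlying the coloured Tutte- and diamond-reductions of \Cref{thm: reducible or HIST} (together with the routine treatment of bridges), so by induction on the order it suffices to verify all $3$-connected cubic graphs of order at most~$20$. For these, \Cref{thm: min counterexample} does the pruning: a smallest $3$-connected cubic graph without a $3$-decomposition would contain none of the configurations of \Cref{fig:reducible-graphs}, and in particular would be triangle-free and $K_{2,3}$-free. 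It therefore suffices to show that \emph{every} $3$-connected cubic graph of order at most~$20$ avoiding all six reducible configurations admits a $3$-decomposition; the existence of one without a decomposition would, being minimal, contradict \Cref{thm: min counterexample}.

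Concretely I would: (a) generate, up to isomorphism, all $3$-connected cubic graphs of each even order $n\le 20$ with an established cubic-graph generator (such as \texttt{snarkhunter} or \texttt{genreg}, whose completeness is documented and whose output counts can be cross-checked against the known enumeration sequence); (b) filter out every graph containing one of the reducible configurations; and (c) for each surviving graph search for a $3$-decomposition by backtracking over candidate spanning trees $T$ (or an equivalent SAT/constraint formulation), accepting $T$ exactly when $G-E(T)$ is a disjoint union of cycles and single edges, i.e.\ has no path component with two or more edges — in which case the cycles form $C$ and the single edges form $M$. Note that, since $T$ is spanning, every vertex has $T$-degree at least~$1$, so $G-E(T)$ automatically has maximum degree at most~$2$ and only the ``no long path'' condition must be tested.

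The principal obstacle is scale and trustworthiness rather than mathematical depth: the number of connected cubic graphs on $20$ vertices already runs into the hundreds of thousands, so the search must be pruned hard (which is exactly what the reducible-configuration filter and the $3$-connectivity restriction achieve) and every component of the pipeline must be validated. I would therefore guard against bugs on several fronts: verifying that the generator reproduces the published counts of (3-connected) cubic graphs order by order, testing the decomposition finder on instances whose answer is known, and — most importantly — having the finder emit, for each graph, an explicit certificate $(T,C,M)$ whose validity (spanning tree, $2$-regularity of $C$, matching $M$, correct edge partition) is checked by a separately written, deliberately simple verifier. A last point to nail down is the precise equivalence between a $3$-decomposition $(T,C,M)$ and the searched combinatorial condition, including the correct handling of length-$0$ paths (tree-saturated vertices) and of a possibly empty matching.
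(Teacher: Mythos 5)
Your computational kernel is exactly the paper's: enumerate the known complete lists of cubic graphs, and for each graph exhibit a spanning tree $T$ such that $G-E(T)$ is a disjoint union of cycles, single edges, and isolated vertices, with the tree stored as a checkable certificate. The parity remark explaining the jump from $20$ to $22$ is also the intended one. However, the paper does \emph{none} of the pruning you build on top of this: it simply runs a heuristic (falling back to enumeration) over \emph{all} connected cubic graphs of order at most~$20$, with no restriction to $3$-connected graphs and no filtering by reducible configurations. That brute force is feasible at this scale, and it keeps the proof logically independent of the rest of the paper --- which matters, because your superstructure introduces two genuine problems.

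First, a circularity: the reducibility results you invoke (\Cref{reducible-graphs}, and hence \Cref{thm: min counterexample}) are themselves proved in this paper \emph{using} \Cref{small-graphs-3-dec} to dispose of small host graphs ($K_4$ in \Cref{reducible-triangle}, graphs on at most six vertices in \Cref{reducible-k23-pet-g5,reducible-domino}). Filtering your search by these configurations therefore assumes the theorem you are proving. This is repairable --- the dependency only reaches order~$6$, so one could verify orders $4$ and $6$ directly and then induct --- but you would have to set that bootstrap up explicitly, and the paper avoids the issue entirely by not pruning. Second, your reduction to the $3$-connected case is asserted, not proved, and the justification offered is wrong: the Tutte- and diamond-reductions of \Cref{thm: reducible or HIST} operate on a \emph{given} $3$-decomposition whose tree has a degree-$2$ vertex; they are not a mechanism for cutting a cubic graph along a bridge or $2$-edge-cut, capping the sides, and reassembling decompositions. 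That connectivity reduction is a separate (known, but nontrivial) argument that this paper neither states nor needs for this theorem; if you want to restrict the search to $3$-connected graphs you must prove or cite it independently. As written, a graph of order at most~$20$ that is connected but not $3$-connected simply falls outside your case analysis.
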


\paragraph{Outline.}
We specify the basic notation and definitions in the next section.
In Section~\ref{sec: extended or HIST} we prove that every 3-decomposition stems from an extension of a decomposition with a HIST.
We formalise the concepts of extensions and reductions, which are fundamental for this paper, in \Cref{sec:ext-red}, where we also show initial properties concerning these.
We give proofs of reducibility of the six configurations shown in \Cref{fig:reducible-graphs} in Section~\ref{sec: reducible configurations} (and partly in the appendix).
Na\"ive extensions and their complexity are discussed in Section~\ref{sec: naive and complex}.
In \Cref{sec: applications and small graphs} we present all the consequences that follow from these reducible configurations, together with our computer-aided proof that such a counterexample has at least 22 vertices.

\section{Preliminaries}
All graphs in this paper are simple and finite.
Most of the following notation is based on \cite{Die10}.
A \emph{graph} $G$ has \emph{vertex set} $V(G)$ and \emph{edge set} $E(G)$.
We write $uv$ for an edge with ends~$u$ and~$v$ and, for $A,B\subseteq V(G)$, $E(A,B)$ denotes the set of edges with one end in $A$ and the other in $B$.
The set of \emph{neighbours} of a vertex $v$ in $G$ is $N_G(v)$ or $N(v)$.
The graph obtained from $G$ by removing a subset $V'$ of $V(G)$ and all edges with at least one end in $V'$ is denoted by $G-V'$.
If $V' = \{v\}$ is a singleton, then we write $G-v$ as a shorthand for $G-\{v\}$.
Similarly, for an edge set $E'\subseteq E(G)$ we write $G-E'$ for the graph with vertex set $V(G)$ and edge set $E\setminus E'$ and we abbreviate $G-\{e\}$ to $G-e$.
A \emph{path} is a graph~$P$ with $V(P)=\{v_1,\ldots,v_n\}$ and $E(P) =\{v_1v_2,\ldots,v_{n-1}v_n\}$, for which we write $P=v_1\ldots v_n$.
Similarly, we write $v_1\ldots v_nv_1$ for the \emph{cycle} $P+v_nv_1$.
The complete graph, the cycle, and the path on $n$ vertices are denoted by $K_n$, $C_n$, and $P_n$ respectively and $K_{r_1,r_2}$ is the \emph{complete bipartite graph} with parts of cardinality $r_1$ and $r_2$.

A \emph{tree-decomposition} $(T, \calV)$ of a graph $G$ consists of a tree $T$
and a family of vertex sets $\calV=\{V_i\colon i\in V(T)\} \subseteq 2^{V(G)}$ indexed by the vertices $i$ of $T$.
It satisfies that 
\begin{enumerate}[(i)]
	\item every vertex $v\in V(G)$ is contained is some set $V_i$,
	\item for each edge $uv\in E(G)$ there exists a set $V_i$ such that $\{u,v\}\subseteq V_i$, and
	\item the set $\{i\colon v\in V_i\}$ induces a subtree of $T$ for all $v\in V(G)$.
\end{enumerate}
The \emph{width} of $(T, \calV)$ is defined as $\max\{|V'|\colon V' \in \calV \}-1$.
If $T$ is a path, then $(T,\calV)$ is a \emph{path-decomposition of~$G$}.
Furthermore, the \emph{tree-width} (respectively \emph{path-width}) of~$G$ is the minimal width of its tree-decompositions (respectively path-decompositions).

A \emph{decomposition} of a graph~$G$ is a list of subgraphs such that every edge of~$G$ is contained in exactly one of the subgraphs.
We say that $G$ \emph{decomposes} into the graphs in the list.
In the context of decompositions 1-regular subgraphs are called \emph{matchings}.
A \emph{3-decomposition $(T,C,M$)} of a graph~$G$ is a decomposition of $G$ into a spanning tree $T$, a 2-regular subgraph~$C$, and a matching $M$.
An edge of  $T$, $C$, and $M$ is called \emph{$T$-edge}, \emph{$C$-edge}, and \emph{$M$-edge} and coloured green, red, and blue when we depict 3-decompositions in figures, respectively.

\section{The HIST-extension conjecture}
\label{sec: extended or HIST}
A graph is \emph{homeomorphically irreducible} if it does not contain vertices of degree~2.
We use \emph{HIST} as an abbreviation for {homeomorphically irreducible spanning tree}.
\begin{lemma}[3-decompositions are preserved by Tutte- and diamond-extensions and reductions]
	\label{3-dec-preserved}
	Let $G'$ be a graph with a spanning tree $T'$.
	Colour the edges of~$T'$ green and all other edges of $G'$ black.
	If $(G, T)$ is obtained from $(G', T')$ by either a Tutte- or a diamond-extension (see \Cref{fig: ExtensionOfDecomposition}), then $G-E(T)$ decomposes into cycles and and a matching if and only if $G'-E(T')$ does.
\end{lemma}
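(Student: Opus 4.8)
The plan is to reduce the whole statement to one elementary fact about edge-decompositions and then to carry out a short, case-based bookkeeping of which edges are black (non-tree) before and after an extension. The elementary fact I would isolate first is: \emph{if $H$ is a graph and $F$ is a disjoint union of independent edges (copies of $K_2$) that is vertex-disjoint from $H$, then $H \sqcup F$ decomposes into cycles and a matching if and only if $H$ does.} The backward direction is immediate --- add the edges of $F$ to the matching, which remains a matching since $F$ is vertex-disjoint from $H$. For the forward direction, in any decomposition of $H \sqcup F$ into a $2$-regular subgraph and a matching, both ends of each edge of $F$ have degree $1$ and so cannot lie on a cycle; hence every edge of $F$ lies in the matching, the cycles lie entirely in $H$, and restricting the matching to $E(H)$ gives a decomposition of $H$.

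The core of the argument is then to show that for each extension type in \Cref{fig: ExtensionOfDecomposition} one has $G - E(T) = (G' - E(T')) \sqcup F$, where $F$ is a vertex-disjoint union of independent edges supported on the newly created vertices. I would verify this case by case. Subdividing a green edge changes neither the set of black edges nor the black-degree of any original vertex, so for either variant of the Tutte-extension (the non-incident and the incident one) the only new black edge is the single edge $uv$ joining the two subdivision vertices, and $u,v$ carry no other black edge; thus $F = K_2$. For the diamond-extension, deleting the green edge $xy$ removes no black edge (it lies in $T'$), and the only black edges introduced are the two non-incident diamond edges $d_1 d_2$ and $d_3 d_4$; since all four diamond vertices are incident to no further black edge, $F = 2K_2$. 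Applying the elementary fact with $F = K_2$ or $F = 2K_2$ then yields the claimed equivalence. For completeness, though it is not needed for the decomposition statement itself, I would also note that the green edges still form a spanning tree of $G$, since the rerouted green path reconnects exactly the two subtrees into which deleting the subdivided/removed green edge splits $T'$.

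Finally, because the statement is an ``if and only if'', it is symmetric in $(G,T)$ and $(G',T')$; consequently the same equivalence also covers the inverse operations, namely the Tutte- and diamond-reductions, and no separate argument for reductions is required. The main obstacle here is purely one of careful bookkeeping rather than of ideas: one has to confirm, in each of the three configurations and with attention to the green/black labelling inside the diamond and to the incident versus non-incident Tutte variant, that every black edge created by the extension is isolated from the rest of the black graph. Once this is checked, the equivalence follows immediately from the one elementary observation above.
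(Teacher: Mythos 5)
Your proof is correct and takes essentially the same route as the paper's: both arguments come down to the observation that the black (non-tree) graph of $G$ equals that of $G'$ together with a vertex-disjoint $K_2$ (Tutte-extension) or $2K_2$ (diamond-extension), and that attaching or removing such isolated independent edges does not affect decomposability into cycles and a matching. Your explicit packaging of this as a stand-alone fact with both directions argued merely spells out what the paper states in a single line.
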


\begin{proof}
	First observe that the respective extensions and reductions preserve the property that the green edges form a spanning tree (both only insert subdivision vertices on green edges and connect them by black edges).
	Since $G'-E(T')$ is a graph of maximum degree~2 it decomposes into a 2-regular graph $C'$ and a disjoint union of paths $P'$.
	If $G$ is obtained from $G'$ by a Tutte-extension, then $G$ decomposes into $T$, the 2-regular graph $C'$, and the disjoint union of $P'$ with an additional $K_2$.
	Otherwise $G$ is a diamond-extension of $G'$ and decomposes into $T$, $C'$, and the disjoint union of $P'$ with a $2K_2$.
	In both cases~$P'$ is a matching if and only if $G-E(T)-E(C')$ is a matching.
	This settles the claim.
\end{proof}

\hist*

\begin{proof}
	Throughout this proof, a graph is always given with a 3-decomposition.
	The edges of the tree in the decomposition are coloured green and all other edges are black.
	If $T$ is homeomorphically irreducible, then $G-E(T)$ has only vertices of degree~0 and~2.
	In particular, $G-E(T)$ is a disjoint union of isolated vertices and cycles and, hence, $M = \emptyset$.
	
	From now on we may assume that $T$ contains a vertex $v$ with $\deg_T(v) =2$.
	Since $(T, C, M)$ is a 3-decomposition of $G$, we obtain that $v$ is incident to an $M$-edge $uv$ in~$G$.
	We denote the neighbours of $u$ and $v$ as depicted in \Cref{fig:3-dec-hist-proof-deg2}.
	Since $uv$ is an M-edge, the edges $ux_u$, $uy_u$, $vx_v$, and $vy_v$ are $T$-edges.
	
	We can apply a Tutte-reduction here unless one of the edges $x_uy_u$ or $x_vy_v$ is present in $G$ since these would create parallels.
	Hence, if neither of these edges is in $G$, then $(G, (T, C, M))$ can be obtained via a Tutte-extension from $G-\{u,v\}+\{x_uy_u, x_vy_v\}$ with the 3-decomposition $(T-\{x_uu, uy_u, x_vv, vy_v\}+\{x_uy_u, x_vy_v\}, C, M-uv)$.
	
	By symmetry, we may assume that $x_uy_u \in E(G)$, as shown in \Cref{fig:3-dec-hist-proof-xuyu}.
	This implies that $x_uy_u$ is an $M$-edge (a $T$-edge would cause the cycle $ux_uy_uu$ in $T$ whereas a $C$-edge would disconnect the $T$-edges $ux_u$ and $uy_u$ from the rest of the tree).
	We may conclude that $x_u\tilde{x}_u$ and $y_u\tilde{y}_u$ are $T$-edges, where $\tilde{x}_u$ (respectively $\tilde{y}_u$) denotes the unique neighbour of $x_u$ (respectively $y_u$) outside the triangle $ux_uy_uu$.
	In particular, we obtain $\tilde{x}_u \neq \tilde{y}_u$ since $T$ is cycle-free.
	This is illustrated in \Cref{fig:3-dec-hist-proof-notv}.
	
	The only obstacle to applying a Tutte-reduction using the $M$-edge $x_ux_v$ is the case where either $\tilde{x}_u=v$ or $\tilde{y}_u = v$ since this creates a parallel to the $M$-edge $uv$.
	Thus, if $v \notin \{\tilde{x}_u$, $\tilde{y}_u\}$, then $G$ can be obtained by a Tutte-extension of $G'\coloneqq G-\{x_u, x_v\} + \{u\tilde{x}_u, u\tilde{y}_u\}$ with the 3-decomposition $(T-E(\tilde{x}_ux_uuy_u\tilde{y}_u)+\{\tilde{x}_uu, u\tilde{y}_u \}, C, M-\{x_uy_u\})$.
	
	Otherwise $\tilde{x}_u = v$ or $\tilde{y}_u = v$.
	Say, $\tilde{x}_u = v$ (the other case works similarly by interchanging the roles of $x$ and $y$).
	We assume, as illustrated in \Cref{fig:3-dec-hist-proof-xut}, that $x_u = x_v$ (otherwise rename $x_v$ and $y_v$).
	Observe that $y_v\neq \tilde{y}_u$ since this would cause a cycle in $T$.
	If~$y_v$ and~$\tilde{y}_u$ are not adjacent, then $G$ can be obtained by a diamond-extension of the graph $G-\{y_u, x_u, u, v\} + \{\tilde{y}_u\tilde{v}\}$ with the 3-decomposition $(T-E(\tilde{y}_uy_uux_uv\tilde{v})+\{\tilde{y}_u\tilde{v}\}, C,  M-\{y_ux_u, uv\})$.
	If, otherwise, $\tilde{v}\tilde{y}_u \in E(G)$, then this edge is an $M$-edge (a $T$-edge would cause a cycle in $T$ and a $C$-edge would disconnect $T$).
	Let $\tilde{y}_v'$ be the unique vertex in $N_G(y_v)\setminus \{\tilde{x}_u, \tilde{y}_u\}$ and let $\tilde{y}_u'$ be the unique neighbour of $\tilde{y}_u$ which is not in $\{y_u, y_v\}$.
	Since $y_u$ and $v$ already have three neighbours, we obtain that $y_u\tilde{y}_u'$ and $v\tilde{y}_v'$ are not edges of $G$.
	In particular, $G$ can be obtained by a Tutte-extension of $G-\{y_v,\tilde{y}_u\}+\{v\tilde{y}_v', y_u\tilde{y}_u'\}$ with the 3-decomposition $(T-\{\tilde{y}_u'\tilde{y}_u,\tilde{y}_uy_u, vy_v, y_v\tilde{y}_v'\}+\{v\tilde{y}_v', y_u\tilde{y}_u'\}, C, M-y_v\tilde{y}_u)$.
	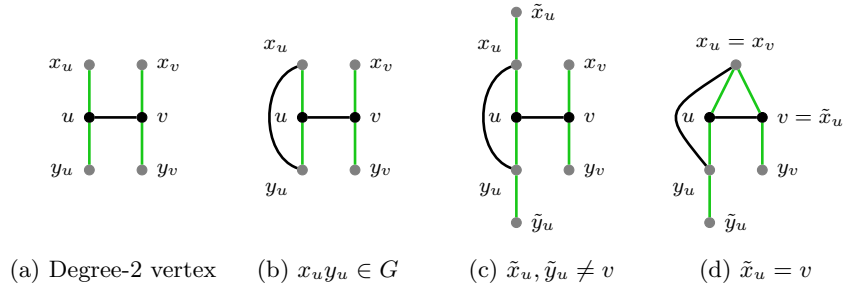
\begin{figure}[ht]
		\centering
		\subcaptionbox[]{Degree-2 vertex\label{fig:3-dec-hist-proof-deg2}}[.225\textwidth][c]{
			\begin{tikzpicture}[scale=0.7]
				\draw
				(-.7,2) node (lo2) [label=right:{\phantom{$\tilde{x}_u$}}] {}
				(-.7,-2) node (lu2) [label=right:{\phantom{$\tilde{y}_u$}}] {}	
				(-.7,1) node (lo) [ivertexgrey,label=left:{$x_u$}] {}
				(-.7,0) node (lm) [ivertexblack,label=left:{$u$}] {}
				(-.7,-1) node (lu) [ivertexgrey,label=left:{$y_u$}] {}
				(.3,1) node (ro) [ivertexgrey,label=right:{$x_v$}] {}
				(.3,0) node (rm) [ivertexblack,label=right:{$v$}] {}
				(.3,-1) node (ru) [ivertexgrey,label=right:{$y_v$}] {};
				\draw[iedge, color=tcol] (lu)--(lm)--(lo) (ru)--(rm)--(ro);
				\draw[iedge] (lm)--(rm);
			\end{tikzpicture}		}
		\subcaptionbox[]{$x_uy_u\in G$\label{fig:3-dec-hist-proof-xuyu}}[.225\textwidth][c]
		{
			\begin{tikzpicture}[scale=0.7]
				\draw
				(-.7,2) node (lo2) [label=right:{\phantom{$\tilde{x}_u$}}] {}
				(-.7,-2) node (lu2) [label=right:{\phantom{$\tilde{y}_u$}}] {}	
				(-.7,1) node (lo) [ivertexgrey,label=above left:{$x_u$}] {}
				(-.7,0) node (lm) [ivertexblack,label=left:{$u$}] {}
				(-.7,-1) node (lu) [ivertexgrey,label=below left:{$y_u$}] {}
				(.3,1) node (ro) [ivertexgrey,label=right:{$x_v$}] {}
				(.3,0) node (rm) [ivertexblack,label=right:{$v$}] {}
				(.3,-1) node (ru) [ivertexgrey,label=right:{$y_v$}] {};
				\draw[iedge, color=tcol] (lu)--(lm)--(lo) (ru)--(rm)--(ro);
				\draw[iedge] (lm)--(rm);
				\draw[iedge] (lo) to[bend right=70] (lu) {};
			\end{tikzpicture}		}
		\subcaptionbox[]{$\tilde{x}_u,\tilde{y}_u \neq v$\label{fig:3-dec-hist-proof-notv}}[.225\textwidth][c]
		{
			\begin{tikzpicture}[scale=0.7]
				\draw
				(-.7,2) node (lo2) [ivertexgrey,label=right:{$\tilde{x}_u$}] {}
				(-.7,1) node (lo) [ivertexgrey,label=above left:{$x_u$}] {}
				(-.7,0) node (lm) [ivertexblack,label=left:{$u$}] {}
				(-.7,-1) node (lu) [ivertexgrey,label=below left:{$y_u$}] {}
				(-.7,-2) node (lu2) [ivertexgrey,label=right:{$\tilde{y}_u$}] {}
				(.3,1) node (ro) [ivertexgrey,label=right:{$x_v$}] {}
				(.3,0) node (rm) [ivertexblack,label=right:{$v$}] {}
				(.3,-1) node (ru) [ivertexgrey,label=right:{$y_v$}] {};
				\draw[iedge, color=tcol] (lu2)--(lu)--(lm)--(lo)--(lo2) (ru)--(rm)--(ro);
				\draw[iedge] (lm)--(rm);
				\draw[iedge] (lo) to[bend right=70] (lu) {};
			\end{tikzpicture}		}
		\subcaptionbox[]{$\tilde{x}_u = v$\label{fig:3-dec-hist-proof-xut}}[.225\textwidth][c]
		{
			\begin{tikzpicture}[scale=0.7]
				\draw
				(-.7,2) node (lo2) [label=right:{\phantom{$\tilde{x}_u$}}] {}
				(-.7,-2) node (lu2) [label=right:{\phantom{$\tilde{y}_u$}}] {}	
				(-.2,1) node (lo) [ivertexgrey,label=above:{$x_u = x_v$}] {}
				(-.7,0) node (lm) [ivertexblack,label=left:{$u$}] {}
				(-.7,-1) node (lu) [ivertexgrey,label=below left:{$y_u$}] {}
				(-.7,-2) node (lu2) [ivertexgrey,label=right:{$\tilde{y}_u$}] {}
				(.3,0) node (rm) [ivertexblack,label=right:{$v=\tilde{x}_u$}] {}
				(.3,-1) node (ru) [ivertexgrey,label=right:{$y_v$}] {};
				\draw[iedge, color=tcol] (lu2)--(lu)--(lm)--(lo) (ru)--(rm)--(lo);
				\draw[iedge] (lm)--(rm);
				\draw[iedge] (lo) .. controls (-1.6, 0.1) .. (lu);
			\end{tikzpicture}		}
		\caption{Illustration of the cases appearing in the proof of \Cref{thm: reducible or HIST}.
		\label{fig:3-dec-hist-proof}} 
	\end{figure}
\end{proof}

\equivalentconjectures*
\begin{proof}
	Throughout this proof, whenever a graph is given with a spanning tree we implicitly assume that the edges of the tree are green and all other edges are black.
	
	Let $G$ be a finite connected cubic graph.
	First assume that $G$ satisfies the 3-decomposition conjecture and let $(T, C, M)$ be a 3-decomposition of $G$.
	If $T$ is homeomorphically irreducible, then $G$ trivially satisfies the HIST-extension conjecture (with an empty sequence of extensions).
	Otherwise, by \Cref{thm: reducible or HIST}, there exists a graph $G'$ with a HIST $T'$ such that $(T', C, \emptyset)$ is a 3-decomposition of $G'$ and $G$ can be obtained from $G'$ (with the green-black colouring induced by~$T'$) by a finite sequence of Tutte- and diamond-extensions, that is, $G$ satisfies the HIST-extension conjecture.
	
	Now we assume that $G$ satisfies the HIST-extension conjecture.
	In particular, there exists a graph $G'$ with a homeomorphically irreducible spanning tree $T'$ such that $G$ can be obtained by a finite sequence of Tutte- and diamond-extensions from $G'$.
	Since all vertices of $T'$ are of degree~1 or~3, the graph $G-E(T')$ decomposes into a 2-regular graph $C$ and isolated vertices.
	Thus, $(T',C, \emptyset)$ is a 3-decomposition of $G'$.
	By \Cref{3-dec-preserved}, $G$ satisfies the 3-decomposition conjecture.
\end{proof}

\section{Extensions and reductions}
\label{sec:ext-red}
In the following we formalise the replacement of an induced subgraph $R$ of a cubic graph~$G$ by some other subcubic graph $S$.
Intuitively, this means that we add the graph~$S$ to $G-V(R)$ and connect the vertices of $S$ ``appropriately''.
In order to define what ``appropriately'' means, we augment $R$ to a graph $X$ by adding leaves to all vertices of $R$ of degree less than~3 such that all vertices of $R$ are of degree~3 in $X$.
We do the same for $S$ and use these additional vertices to specify the edges between $S$ and $G-V(R)$.

This leads us to the definition of a \emph{template graph} which is a graph $X$ whose vertex set is partitioned into a set of \emph{inner vertices} $I(X)$ and a set of \emph{outer vertices} $O(X)=\{v_1,\ldots,v_k\}$.
All inner vertices have degree~3 and all outer vertices have degree~1.
We call $X-O(X)$ the \emph{core of $X$} and denote it by $c(X)$.
For a subcubic graph $R$, the unique template graph $X$ (up to labelling the outer vertices) with $R$ as its core is called the template graph of $R$. 
The template graph of the domino can be seen in on the right of \Cref{fig:transformation-illustration}, where the outer vertices are grey.

We can now define the replacements we need:
let $R$, $S$ be subcubic graphs and let $X$, $Y$ be their template graphs.
Furthermore, let $G$ be a cubic graph containing an induced subgraph $R'$ isomorphic to $R$, that is, $\varphi(R) = R'$ for some isomorphism $\varphi$.
Since $G$ is cubic, any vertex $\varphi(v)\in R'$ has exactly as many incident edges to vertices in $G-V(R')$ as $v$ has neighbours in $O(X)$.
Thus, we can identify each neighbour $w$ of $v$ that is an outer vertex with a neighbour $\psi(w)$ of $\varphi(v)$ that is not in $R'$.

The graph
\begin{alignat*}{2}
	G_\varphi[X\to Y] 
	\coloneqq &(G-V(R'))\cup S     	&\,+\, \{v\psi(w)\colon vw\in Y, w\in O(Y)\} \\
	= &(G-\varphi(I(X))) \cup c(Y)  &\,+\, \{v\psi(w)\colon vw\in Y, w\in O(Y)\}
\end{alignat*}
is the \emph{$(X,Y,\varphi)$-transformation} of $G$.
(Note that the resulting graph does not depend on the choice of $\psi$.)
An \emph{$(X,Y)$-transformation} of $G$ is a graph $G[X\to Y] \coloneqq G_\varphi[X\to Y]$ for some isomorphism $\varphi\colon c(X)\to R'$ with $R'\subseteq G$.
Moreover, a pair $(X,Y)$ of template graphs with $O(X) = O(Y)$ is called an \emph{$(X,Y)$-transformation}.
Transformations are called \emph{extensions} if $|V(X)|<|V(Y)|$ and \emph{reductions} if $|V(X)| > |V(Y)|$.

We depict $(X,Y)$-transformations as seen in \Cref{fig:transformation-illustration}.
We use such illustrations to describe the transformations since they are more convenient and easier to understand than formally writing down vertex and edge sets.
We omit the labels of the outer vertices; their positioning shows which of these coincide.
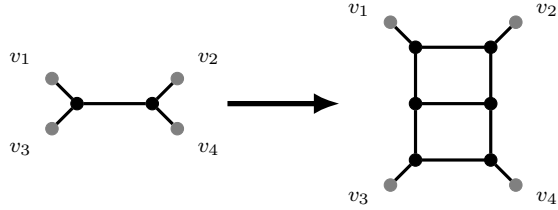
\begin{figure}[htb]
	\centering
	\begin{tikzpicture}
		\drawEdge{0}{0}{1}{B}{{black,black,black,black,black}}{black}{black!50}
		\node at (-1.25,0.6) {$v_1$};
		\node at (1.25,0.6) {$v_2$};
		\node at (-1.25,-0.6) {$v_3$};
		\node at (1.25,-0.6) {$v_4$};
		\transformationArrow{1.5}{0}{1.5}{2}
		\drawDomino{4.5}{0}{1}{A}{{black,black,black,black,black,black,black,black,black,black,black}}{black}{black!50}
		\node at (3.25,1.25) {$v_1$};
		\node at (5.75,1.25) {$v_2$};
		\node at (3.25,-1.25) {$v_3$};
		\node at (5.75,-1.25) {$v_4$};
	\end{tikzpicture}	\caption{Template graphs and transformations:
	this example illustrates the extension where $c(X)$ is the $K_2$ and $c(Y)$ is the domino.
	The outer vertices of the template graphs are grey.
	\label{fig:transformation-illustration}}
\end{figure}

Note that the outer vertices of the template graph do not necessarily correspond to distinct vertices of the graph $G$ containing the core. 
As a result, $G$ need not have a subgraph isomorphic to $X$, just to $c(X)$.
Additionally, the graphs $G[X\to Y]$ are not necessarily simple, even if $G$, $X$, and $Y$ are.
However, we only apply transformations which yield a simple graph.
A sufficient condition for this is that no vertex in the core of $Y$ has multiple adjacent outer vertices, or equivalently, that the minimum degree in the core of $Y$ is~2.

\paragraph{Compatible extensions and reducible configurations.}
A \emph{reducible configuration} is a (subcubic) graph $S$ that is not part of a 3-connected minimum counterexample to the 3-decomposition conjecture.
In this case we also say that $S$ is \emph{reducible}.

Our goal is to describe $(X,Y)$-extensions that allow us to extend 3-de\-com\-po\-si\-tions.
For ease of use, an $(X,Y)$-extension is \emph{3-compatible} if, for every cubic graph $G$ with a 3-decomposition and for every extension $H\coloneqq G[X\to Y]$ of~$G$, the graph $H$ also has a 3-decomposition.
In fact, in our proofs that certain transformations are 3-compatible we show how to construct a 3-decomposition of~$H$ from one of~$G$.

Finding a 3-compatible extension $(X,Y)$ is very helpful for proving that the core of $Y$ is reducible.
The following lemma exhibits a property that implies reducibility if it is satisfied by such an extension.
\begin{lemma}
	\label{compatible-to-reducible}
	Let $(X,Y)$ be a 3-compatible extension.
	The core $c(Y)$ is reducible if for every 3-connected cubic graph $H$ containing (a subgraph isomorphic to) $c(Y)$
	\begin{itemize}
		\item $H$ has a 3-decomposition, or,
		\item $c(Y)$ is induced and the reduction $G=H[Y\to X]$ is a simple 3-connected graph.
	\end{itemize}
\end{lemma}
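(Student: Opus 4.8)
The plan is to argue by contradiction, leveraging the minimality of a hypothetical counterexample together with the 3-compatibility of $(X,Y)$. Suppose $c(Y)$ were \emph{not} reducible. Then, by definition, there is a 3-connected minimum counterexample $H$ to the 3-decomposition conjecture that contains a subgraph isomorphic to $c(Y)$. Being a counterexample, $H$ has no 3-decomposition, so the first alternative of the hypothesis fails for this particular $H$; I would therefore invoke the second alternative and conclude that $c(Y)$ is induced in $H$ and that the reduction $G = H[Y\to X]$ is a simple, 3-connected graph.

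Next I would check that $G$ is a strictly smaller cubic graph than $H$. Cubicity is immediate: every transformation reconnects the newly inserted core through its outer vertices so as to restore degree three at every affected vertex, and $H$ is cubic, so the reduction $[Y\to X]$ again yields a cubic graph. For the size, since $(X,Y)$ is an \emph{extension} we have $|V(X)| < |V(Y)|$, and as $O(X) = O(Y)$ this forces $|I(X)| < |I(Y)|$, i.e.\ $|c(X)| < |c(Y)|$; replacing the induced copy of $c(Y)$ in $H$ by $c(X)$ thus strictly decreases the vertex count. Hence $G$ is a simple, 3-connected cubic graph with fewer vertices than $H$, and by minimality of $H$ it must admit a 3-decomposition.

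To reach the contradiction I would show that $H$ is recovered from $G$ as an $(X,Y)$-extension, that is, $H = G[X\to Y]$. Once this identity is in place, 3-compatibility of $(X,Y)$ applies to $G$ (which has a 3-decomposition) and to its extension $H = G[X\to Y]$, yielding a 3-decomposition of $H$ and contradicting that $H$ is a counterexample. Therefore $c(Y)$ is reducible.

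I expect the one genuinely delicate point to be the verification that the reduction and the extension are mutually inverse here, namely that $H = G[X\to Y]$. The subtlety is twofold: one must ensure that the copy of $c(X)$ produced inside $G$ is itself \emph{induced} (so that the extension $G[X\to Y]$ is even defined, per the definition of $G_\varphi[X\to Y]$), and that the outer-vertex identifications $\psi$ used by the reduction and by the inverse extension coincide, so that exactly the removed incidences are restored and no others. Both of these hinge on $c(Y)$ being induced in $H$, which is precisely why that condition is bundled into the second alternative of the hypothesis; granting it, the reduction cleanly excises only the inner vertices of the copy of $Y$ and reattaches $c(X)$ along the same interface, and the remaining bookkeeping is routine from the definitions of the template graphs and of the $(X,Y)$-transformation.
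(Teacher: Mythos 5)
Your proposal is correct and follows essentially the same argument as the paper: take a minimum 3-connected counterexample $H$ containing $c(Y)$, use the second alternative to obtain a simple 3-connected reduction $G=H[Y\to X]$, invoke minimality to get a 3-decomposition of $G$, and apply 3-compatibility to $H=G[X\to Y]$ for a contradiction. The additional bookkeeping you flag (cubicity, strict size decrease, and the reduction/extension being mutually inverse) is left implicit in the paper's one-line proof but is handled correctly in your write-up.
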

\begin{proof}
	Assume $H$ is a minimal counterexample containing $c(Y)$ as an induced subgraph and let $G\coloneqq H[Y\to X]$ be both simple and 3-connected.
	Since $H$ is a minimal counterexample, $G$ has a 3-decomposition and, because $(X,Y)$ is 3-compatible, $H=G[X\to Y]$ also has a 3-decomposition, which is a contradiction.
\end{proof}

Whenever possible, we want to apply this lemma.
Therefore, in order to prove that a certain configuration $S$ is reducible, we proceed as follows:
\begin{enumerate}[(i)]
	\item Prove a certain $(X,Y)$-extension to be 3-compatible where $S=c(Y)$.
	\item Check that in any simple 3-connected cubic graph $H$ containing a subgraph~$S'$ isomorphic to $S$, $S'$ is actually induced and the $(Y,X)$-reduction of $H$ yields another simple 3-connected graph.
\end{enumerate}
The first option of the lemma lets us assume that the graph obtained from the reduction is still sufficiently large by excluding some small graphs that already have 3-decompositions.
The procedure described here is actually entirely constructive in the sense that given a graph $H$, we can perform reductions using the reducible structures as long as they are present.
Let $G$ be the resulting graph.
If we can find a 3-decomposition for this smaller graph $G$, then we can undo the reductions step by step and extend our decompositions to the larger graphs, finally obtaining one for $H$.

To use this procedure means we regularly need to check whether subgraphs are induced and whether reductions yield simple 3-connected graphs.
To facilitate this, we end this section with two lemmas that are helpful in this respect.
Their proofs use Menger's theorem~\cite{Men27,Die10} when verifying 3-connectivity.
\begin{lemma}
	\label{node-reductions}
	Let $H$ be a 3-connected cubic graph that contains the core of a template graph $Y$ with three outer vertices and let $X$ be the template graph of the~$K_1$.
	Then $c(Y)$ is induced and the reduction $G\coloneqq H[Y\to X]$ is 3-connected.
	If, additionally, $H-V(c(Y))$ consists of more than one vertex, then $G$ is simple.
\end{lemma}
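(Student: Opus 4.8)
The plan is to set $A \coloneqq \varphi(V(c(Y)))$ for the embedded core inside $H$ and $B \coloneqq V(H)\setminus A$, to write $w$ for the single vertex of $c(X) = K_1$, and to recall that $G$ is obtained from $H$ by deleting $A$, adding $w$, and joining $w$ to the $B$-endpoints of the edges of $E(A,B)$. Since $H$ is cubic and $3$-connected, its edge-connectivity satisfies $\lambda(H) \ge \kappa(H) = 3$, and this inequality (together with one parity observation) does essentially all the work. Throughout I write $e(\cdot)$ for the number of edges of a graph.

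For \emph{inducedness} I would double count degrees. Each vertex of the core is an inner vertex of $Y$ and thus has degree $3$ in $Y$, so $2\,e(c(Y)) + 3 = 3|A|$, giving $e(c(Y)) = (3|A|-3)/2$. On the other hand $H$ is cubic, so $2\,e(H[A]) + |E(A,B)| = 3|A|$. Subtracting yields $2\bigl(e(H[A]) - e(c(Y))\bigr) = 3 - |E(A,B)|$; as $c(Y) \subseteq H[A]$ the left-hand side is a nonnegative even integer, whence $|E(A,B)| \in \{1,3\}$, and $c(Y)$ is induced exactly when $|E(A,B)| = 3$. Since $|E(A,B)| \ge 1$ forces $B \neq \emptyset$, an edge cut of size $1$ is ruled out by $\lambda(H) \ge 3$, so $|E(A,B)| = 3$ and $c(Y) = H[A]$ is induced. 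Denote the three edges of $E(A,B)$ by $a_ib_i$ with $a_i \in A$ and $b_i \in B$.

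For \emph{$3$-connectivity} I would assume a vertex cut $S$ of $G$ with $|S| \le 2$ and lift it to a cut of $H$ of order at most $2$, contradicting $\kappa(H) = 3$. Write $G - S$ as the disjoint union of nonempty parts $P$ and $Q$ with no edges between them. If $w \notin S$, then $S \subseteq B$ and, say, $w \in P$; replacing $w$ by $A$ yields the sets $A \cup (P \setminus \{w\})$ and $Q$, whose only candidate connecting edges in $H$ are the $B$--$B$ edges (absent, exactly as in $G$) and the edges $a_ib_i$ (whose ends $b_i$, being neighbours of $w$, lie in $P \cup S$), so $S$ already separates $H$. If $w \in S$, say $S = \{w, q\}$, then $P, Q \subseteq B$ and the $b_i$ distribute among $P$, $Q$, and $\{q\}$; if no $b_i$ lay in $Q$, say, then $q$ alone would separate $Q$, contradicting $\kappa(H) = 3$, so both $P$ and $Q$ receive some $b_i$, and since only three edges are available one side---say $Q$---receives exactly one, through a single edge $a^*b^*$. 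Then $\{q, a^*\}$ separates $H$, again a contradiction. I expect this pigeonhole step, isolating a side that absorbs a single cut edge, to be the crux of the whole lemma.

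Finally, \emph{simplicity}. The graph $G$ has no loops, since the edges inside $A$ are discarded, and because $H[B]$ is simple a parallel pair can only arise if some $b \in B$ is the $B$-end of at least two of the edges $a_ib_i$. If $b$ absorbed all three, then $\{b\} \cup A$ would send no edge to $B \setminus \{b\}$, disconnecting $H$ as soon as $|B| > 1$. If $b$ absorbed exactly two, then $A \cup \{b\}$ would send at most $1 + 1 = 2$ edges to the nonempty set $B \setminus \{b\}$ (the one remaining cut edge, plus the at most one further edge at the cubic vertex $b$), an edge cut of size at most $2$ contradicting $\lambda(H) \ge 3$. Hence, when $|B| > 1$, the $b_i$ are pairwise distinct and $G$ is simple.
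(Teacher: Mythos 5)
Your proposal is correct, and two of its three parts diverge from the paper's argument in an interesting way. For inducedness, the paper argues directly: $3$-connectivity of a cubic graph gives $3$-edge-connectivity, so at least three edges leave $V(c(Y))$, while the three outer vertices cap this at three; your parity count $2\bigl(e(H[A])-e(c(Y))\bigr)=3-|E(A,B)|$ reaches the same conclusion with slightly more machinery but has the small virtue of also delivering $B\neq\emptyset$ and the exact value $e(H[A])=e(c(Y))$ in one stroke. Your simplicity argument is essentially identical to the paper's (all three attachment vertices coinciding disconnects $H$; exactly two coinciding yields a $2$-edge cut). The real difference is in the $3$-connectivity step: the paper works constructively with Menger's theorem, taking three internally disjoint paths in $H$ between two target vertices, observing that at most one of them can pass through $c(Y)$ because only three edges enter it, and rerouting that one segment through the new vertex $w$; you instead argue by contradiction, lifting a hypothetical cut $S$ of $G$ with $|S|\le 2$ back to a cut of $H$, with the pigeonhole step (one side of the cut absorbs exactly one of the three attachment edges, so $\{q,a^*\}$ separates $H$) carrying the load. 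Both are sound; the paper's version generalises more smoothly to the four-outer-vertex setting of the subsequent square-reduction lemma, where two paths may enter the core and must be crossed, whereas your cut-lifting argument is arguably cleaner for the present three-edge case. One presentational remark: in your case $w\in S$ you silently assume $|S|=2$; the subcase $S=\{w\}$ needs the same pigeonhole applied to $P$ and $Q$ alone (yielding a cut vertex $a^*$ of $H$), which works but should be said.
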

\begin{proof}
	The 3-connectivity of $H$ implies that there are at least three edges joining $H-V(c(Y))$ with $c(Y)$.
	Since $Y$ has three outer vertices, there are exactly three such edges and $c(Y)$ is induced.
	Furthermore, if $H-V(c(Y))$ has more than one vertex, then the three neighbours of $c(Y)$ are distinct:
	if all three of the neighbours coincide, then $H$ is not connected, which is a contradiction. If exactly two of them coincide, then we find a 2-edge cut in $H$, which contradicts to $H$ being 3-connected.
	Consequently, $G$ is simple in this case.
	
	To see that $G$ is 3-connected, consider two distinct vertices $u$ and $v$ of $G$.
	If neither $u$ nor $v$ is the vertex in $c(X)$, then they are in $H$ and we obtain three internally vertex-disjoint paths linking them in $H$.
	At most one of these paths can use vertices of $c(Y)$ since $|E(V(H)\setminus V(c(Y)), V(c(Y)))| = 3$.
	Thus, by potentially replacing the path segment through $c(Y)$ by the vertex in $c(X)$, we obtain three paths in $G$.
	
	If $u$ is the vertex in $c(X)$, then let $u' \in V(c(Y))$.
	We obtain three vertex-disjoint $u'v$-paths in $H$, each of which uses one of the edges from $c(Y)$ to the rest of $H$.
	Thus we can replace these initial path segments by the edges incident to $u$ in $G$ and obtain the required paths there.
\end{proof}

\begin{lemma}
	\label{square-reductions}
	Let $H$ be a 3-connected cubic graph that contains the core of a template graph $Y$ with four outer vertices and $|V(c(Y))| \geq 5$.
	Moreover, let $X$ be the template graph of the square.
	Then $c(Y)$ is induced and the reduction $G\coloneqq H[Y\to X]$ is simple and 3-connected.
\end{lemma}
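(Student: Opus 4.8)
The plan is to verify the three assertions—that $c(Y)$ is induced, that $G$ is simple, and that $G$ is $3$-connected—in that order, reusing the edge-counting idea behind \Cref{node-reductions} for the first two and then reducing $3$-connectivity to $3$-edge-connectivity. Throughout I write $F \coloneqq H - V(c(Y))$ and note that, for the reduction to be defined, $F \neq \emptyset$.

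First I would settle that $c(Y)$ is induced. Since $Y$ has four outer vertices, its core satisfies $3\,|V(c(Y))| - 2\,|E(c(Y))| = 4$; so if $c(Y)$ has $k$ \emph{chords} in $H$ (edges of $H$ between core vertices that are not edges of $c(Y)$), then a degree count shows the cut $E(V(c(Y)), V(F))$ has exactly $4 - 2k$ edges. As $H$ is $3$-connected and cubic it is $3$-edge-connected, so this cut has at least three edges; hence $k = 0$, the cut has exactly four edges, and $c(Y)$ is induced. Simplicity of $G$ is then immediate from the simplicity criterion in \Cref{sec:ext-red}: the inserted core $c(X)$ is the square $C_4$, which has minimum degree $2$, so no vertex of $c(X)$ receives two boundary edges and $G$ has no parallel edges (the four square vertices are new, and each is joined to $F$ by a single edge). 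I would also record that $G$ is cubic: each square vertex has degree $2$ in the cycle plus one boundary edge, and the degrees of the vertices of $F$ are unchanged.

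For $3$-connectivity I would \emph{not} argue directly with vertex-Menger as in \Cref{node-reductions}. This is the main obstacle: the blob $c(Y)$ can internally route several disjoint paths between its four attachment points, whereas the replacing $4$-cycle can simultaneously route two disjoint paths between its attachment points only for \emph{non-crossing} pairings; transferring three internally disjoint $u$--$v$ paths of $H$ directly to $G$ can therefore fail when two of them traverse $c(Y)$ with a crossing pattern at the square. To sidestep this, I would instead prove that $G$ is $3$-edge-connected and then invoke that vertex- and edge-connectivity coincide for cubic graphs (a short Menger-based argument, which I would include) to conclude that $G$ is $3$-connected.

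To show $G$ is $3$-edge-connected, suppose for contradiction it has an edge cut of size at most $2$ with sides $A, B$, and split its crossing edges into those inside $F$, those inside the square, and the four boundary edges; write $A_S, B_S$ for the square vertices on each side. A cut of $C_4$ into two nonempty parts crosses an even number of its edges, namely $2$ or $4$, so the square contributes $0$, $2$, or $4$; the value $4$ is impossible since the total is at most $2$. If the square contributes $0$, all four square vertices lie on one side, and placing $V(c(Y))$ on that same side turns the $G$-cut into a cut of $H$ with the same (at most $2$) crossing edges, contradicting the $3$-edge-connectivity of $H$. If the square contributes $2$, the remaining budget forces no crossing edge inside $F$ and no crossing boundary edge, so each attachment point lies on the side of its square vertex; grouping $V(c(Y))$ with the $F$-part on the side holding more square vertices then yields an $H$-cut whose crossing edges are exactly the $\min(|A_S|,|B_S|) \le 2$ boundary edges attached to the other side, again a contradiction. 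This exhausts the cases, so $G$ is $3$-edge-connected and hence $3$-connected. The only genuinely delicate point is the square-contributes-$2$ case, where one must check that the lifted partition has both sides nonempty and that precisely the boundary edges on one side cross; the rest is bookkeeping.
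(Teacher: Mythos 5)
Your proposal is correct, and the first two claims (inducedness via the degree count $3|V(c(Y))|-2|E(c(Y))|=4$ giving a cut of size $4-2k$, and simplicity from the square's minimum degree being~2) match the paper's argument. For 3-connectivity, however, you take a genuinely different route. The paper argues directly with Menger: it takes three internally disjoint $u$--$v$ paths in $H$, observes that at most two of them can traverse $c(Y)$ (only four edges enter it), and handles precisely the ``crossing'' obstruction you identify by re-pairing the entry and exit points of the two offending paths --- if the pairing $(a_1,b_1),(a_2,b_2)$ of attachment vertices is the diagonal one that the $C_4$ cannot realise disjointly, the swapped pairing $(a_1,b_2),(a_2,b_1)$ can be realised, and the resulting walks are still three internally disjoint $u$--$v$ connections; separate short cases then treat one or both of $u,v$ lying in the square. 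So the difficulty you flag is real but is resolved in the paper by this swap rather than avoided. Your detour through 3-edge-connectivity is clean and arguably less case-heavy: the parity of a cycle cut neatly forces the square to contribute $0$ or $2$ crossing edges, and both subcases lift to a small cut of $H$ (and your check that the lifted sides are nonempty in the contributes-$2$ case does go through, since no boundary edge crosses there, so every square vertex on the minority side drags an $F$-vertex with it). The price is that you must import and prove the auxiliary fact that vertex- and edge-connectivity coincide for cubic graphs, which the paper does not need; what you gain is a single cut-counting argument in place of the path-rerouting and endpoint case analysis.
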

\begin{proof}
	The 3-connectivity of $H$ implies that there are at least $3$ edges between $c(Y)$ and $H-c(Y)$.
	Since $Y$ only has four outer vertices, $c(Y)$ is induced in $H$ and $G$ is simple because the square has minimum degree~2.
	
	To verify the 3-connectivity of $G$, we consider two vertices $u$ and $v$ of $G$.
	If both are not vertices of the square, then we obtain three internally vertex-disjoint paths linking them in $H$.
	At most two of these paths use vertices of $c(Y)$ and all other paths exist in $G$.
	If a single path passes $c(Y)$, then it can be replaced by one in the square and so can two paths unless they both need to connect non-adjacent vertices of the square.
	In this case, we can cross the paths by pairing the start of one with the end of the other and vice versa.
	In all cases we obtain the three desired disjoint paths.
	
	If one vertex, say $v$, is in the square and the other is not, then the paths obtained in $H$ consist of one path that only meets $c(Y)$ in $v$ and two more paths that start with a path segment in $c(Y)$.
	However, the square contains such path segments as well.
	Finally, if both vertices are in the square, replace $u$ its neighbour $x$ outside of the square.
	By the previous case, we obtain a path from $x$ to $v$ that only meets the square in $v$.
	We extend this path to $u$ and take the two disjoint paths the square provides.
	Thus $G$ is also 3-connected.
\end{proof}

\section{New reducible configurations}
\label{sec: reducible configurations}
In this section we prove \Cref{reducible-graphs}.
\reduciblegraphs*

For convenience, we repeat \Cref{fig:reducible-graphs} here.
\begin{figure}[htb]
	\centering
	\begin{tikzpicture}[scale=.75]
		\drawTriangle{-8.6}{-0.1}{1}{tr}{{black,black,black,black,black,black}}{black}{black!50}
		\node[] at (-8.6,-1.5) {triangle};
		
		\drawKTwoThree{-5.75}{-0.1}{1}{k33}{{black,black,black,black,black,black,black,black,black}}{black}{black!50}
		\node[] at (-5.75,-1.5) {$K_{2,3}$};
		
		\drawPetMinusV{-2.5}{-.08}{0.5}{pet}{{black,black,black,black,black,black,black,black,black,black,black,black,black,black,black}}{black}{black!50}
		\node[] at (-2.5,-1.5) {\petv};
		
		\drawGFive{0.25}{.05}{0.75}{g5}{{black,black,black,black,black,black,black,black,black,black,black,black,black,black}}{black}{black!50}
		\node[] at (.45,-1.5) {claw-square};
		
		\drawGFour{2.9}{0}{1}{g4}{{black,black,black,black,black,black,black,black,black,black,black}}{black}{black!50}
		\node[] at (2.9,-1.5) {twin-house};
		
		\drawDomino{5.3}{0}{1}{g3}{{black,black,black,black,black,black,black,black,black,black,black}}{black}{black!50}
		\node[] at (5.3,-1.5) {domino};
	\end{tikzpicture}
\end{figure}

All but one of these proofs adhere to the procedure described in the previous section and we illustrate it using the triangle.
For the remaining graphs we focus on the more challenging cases and refer to the appendix for the others.
The first step for the triangle is to find an $(X,Y)$-extension that is 3-compatible and where $c(Y)$ is the triangle.
For this we choose $X$ such that $c(X)$ is a single vertex.
\begin{lemma}
	\label{3-compatible-triangle}
	The extension shown in \Cref{fig:extension-reduction-triangle} is 3-compatible.
	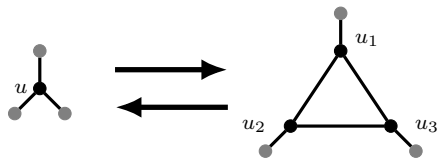
\begin{figure}[htb]
		\centering
		\begin{tikzpicture}
			\drawVertex{0}{0}{1}{B}{{black,black,black}}{black}{black!50}
			\node at (-0.25,0) {$u$};
			\transformationArrow{1}{0.25}{1.5}{2}
			\transformationArrow{2.5}{-0.25}{-1.5}{2}
			\drawTriangle{4}{0}{1}{A}{{black,black,black,black,black,black}}{black}{black!50}
			\node at (4.35,0.65) {$u_1$};
			\node at (2.85,-0.5) {$u_2$};
			\node at (5.15,-0.5) {$u_3$};
		\end{tikzpicture}		\caption{Transforming a node to a triangle.
		\label{fig:extension-reduction-triangle}}
	\end{figure}
\end{lemma}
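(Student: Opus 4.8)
The plan is to start with an arbitrary cubic graph $G$ carrying a 3-decomposition $(T,C,M)$ and an arbitrary vertex $u$, and to show that the graph $H=G[X\to Y]$ produced by expanding $u$ into the triangle on $u_1,u_2,u_3$ (see \Cref{fig:extension-reduction-triangle}) again has a 3-decomposition. Since the triangle is vertex-transitive, $H$ is determined up to isomorphism independently of the isomorphism $\varphi$, so it suffices to exhibit a single valid colouring. Every edge of $H$ outside the triangle keeps its colour from $(T,C,M)$ --- in particular the three edges formerly at $u$ become the external edges of $u_1,u_2,u_3$ and retain their colours --- so the entire task reduces to colouring the three triangle edges.

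First I would classify the colour pattern at $u$. As $C$ is 2-regular, $u$ meets $0$ or $2$ red edges; as $M$ is a matching, it meets at most one blue edge; and as $T$ is spanning, it meets at least one green edge. Hence the multiset of colours on the three edges at $u$ is exactly one of $\{\text{green},\text{green},\text{green}\}$, $\{\text{green},\text{green},\text{blue}\}$, or $\{\text{green},\text{red},\text{red}\}$, the pattern with two red and one blue edge being ruled out. This yields a three-case analysis, and in each case I would colour the triangle so that exactly two of its edges are green, forming a path on $u_1,u_2,u_3$ (a spanning tree of the triangle). Concretely: in the all-green case, make $u_1u_2$ and $u_2u_3$ green and $u_1u_3$ blue; in the $\{\text{green},\text{green},\text{blue}\}$ case, letting $u_3$ carry the blue external edge, make $u_1u_3$ and $u_2u_3$ green and $u_1u_2$ blue; in the $\{\text{green},\text{red},\text{red}\}$ case, letting $u_1$ carry the green external edge, make $u_1u_2$ and $u_1u_3$ green and colour the opposite edge $u_2u_3$ red.

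It then remains to verify the three colour classes. The red edges stay 2-regular, since each triangle vertex ends with red-degree $0$ or $2$; in the last case the edge $u_2u_3$ simply re-splices the two external red edges into the cycle of $C$ that previously passed through $u$. The blue edges stay a matching, because any new blue edge joins two triangle vertices whose external edges are green, so no vertex gains a second blue edge. The step I expect to be the crux is checking that the green edges still form a \emph{spanning tree} of $H$. Here one must confirm that every triangle vertex receives at least one green edge --- this is precisely what forces both $u_1u_2$ and $u_1u_3$ to be green in the $\{\text{green},\text{red},\text{red}\}$ case, so that $u_2$ and $u_3$ are not dropped from the tree --- and that the chosen green path reconnects the tree-branches meeting at $u$ without closing a cycle. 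I would settle this uniformly by viewing the green subgraph of $H$ as $T$ with the vertex $u$ replaced by the green path on $u_1,u_2,u_3$, carrying the former tree-branches of $u$ at the appropriate endpoints; connectivity is then immediate and the edge count equals $|V(H)|-1$, so the green subgraph is a spanning tree. Completing these checks in all three cases shows that $H$ has a 3-decomposition, hence the extension is 3-compatible.
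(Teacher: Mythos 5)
Your proposal is correct and follows essentially the same route as the paper: classify the three possible colour patterns at $u$ (all green; two green and one blue; one green and two red), then in each case colour the triangle with a green path plus one blue or red edge exactly as in the paper's Figure~\ref{fig:3-dec-extension-triangle}, and verify the tree, cycle, and matching properties. The case-by-case assignments you give coincide (up to relabelling) with the paper's, and your contraction-style argument for the spanning tree is a valid packaging of the paper's component-counting verification.
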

\begin{proof}
	Let $G$ be a cubic graph with a 3-decomposition $(T,C,M)$ and let $H$ be an $(X,Y)$-extension of $G$, where the extension is the one from \Cref{fig:extension-reduction-triangle}.
	We wish to extend the decomposition of $G$ to it.
	To this end, we start by determining the possible behaviours at $u$.
	As $T$ is a spanning tree, at least one of the edges incident to $u$ is in $T$.
	If it is exactly one, then the other two are part of a cycle.
	If there are exactly two, then the missing edge is in the matching and, otherwise, there are three.
	These options are shown in \Cref{fig:behaviour-node}, up to rotational symmetry.
	Recall that edges of $T$ are green, those of $C$ red, and those in $M$ blue.
	\begin{figure}[htb]
		\centering
		\begin{tikzpicture}
			\drawVertex{0}{0}{1}{A}{{tcol,ccol,ccol}}{black}{black!50}
			\drawVertex{2}{0}{1}{A}{{tcol,tcol,mcol}}{black}{black!50}
			\drawVertex{4}{0}{1}{A}{{tcol,tcol,tcol}}{black}{black!50}
		\end{tikzpicture}		\caption{Possible behaviour of a 3-decomposition at a single node.
		\label{fig:behaviour-node}}
	\end{figure}
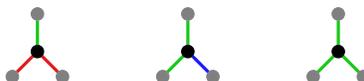
	
	We now need to turn the decomposition $(T,C,M)$ of $G$ into one of $H$, which we do by distinguishing between the possible behaviours described above.
	The extensions obtained are shown in \Cref{fig:3-dec-extension-triangle}.
	We describe how these depictions are to be read, using the first case as an example.
	The remaining ones should then be self-explanatory.
	\begin{figure}[htb]
		\centering
		\begin{tikzpicture}
			\drawVertex{0}{0}{0.9}{A}{{tcol,ccol,ccol}}{black}{black!50}
			\drawVertex{4}{0}{0.9}{A}{{tcol,tcol,mcol}}{black}{black!50}
			\drawVertex{8}{0}{0.9}{A}{{tcol,tcol,tcol}}{black}{black!50}
			\transformationArrow{0.5}{0}{1}{1.5}
			\transformationArrow{4.5}{0}{1}{1.5}
			\transformationArrow{8.5}{0}{1}{1.5}
			\drawTriangle{2.25}{0}{0.75}{B}{{tcol,tcol,ccol,tcol,ccol,ccol}}{black}{black!50}
			\drawTriangle{6.25}{0}{0.75}{C}{{mcol,tcol,tcol,tcol,tcol,mcol}}{black}{black!50}
			\drawTriangle{10.25}{0}{0.75}{D}{{mcol,tcol,tcol,tcol,tcol,tcol}}{black}{black!50}
		\end{tikzpicture}		\caption{Extending a 3-decomposition to a triangle.
		\label{fig:3-dec-extension-triangle}}
	\end{figure}
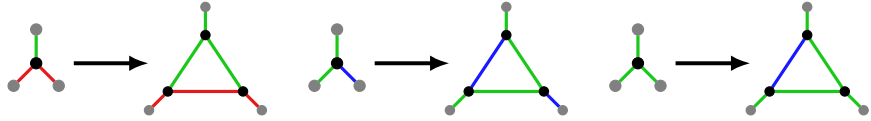
	
	If we only have a single $T$-edge, say $uv_1$, incident to $u$, we make the following extension:
	we begin by setting $T'=T-I(X)+I(Y)$, $C' = C-I(X) + I(Y)$, and $M' = M-EX$.
	These are a spanning forest, a disjoint union of cycles together with a $v_2v_3$-path, and a matching. 
	The forest has four components, three of them being the vertices $I(Y)$ of the triangle.
	
	By adding the edges $u_1v_1,\ u_1u_2$, and $u_1u_3$ to $T'$ we obtain a spanning tree of~$G$ and adding the remaining edges incident to $u_2$ and $u_3$ to $C'$ turns it a disjoint union of cycles.
	As a result $(T',C',M')$ is a 3-decomposition of $G$.
	
	The figure illustrates how to extend the decomposition for the remaining two behaviour types, giving us a 3-decomposition of $H$ in all cases, and thus showing that the extension is 3-compatible.
\end{proof}

We can now complete the second step to obtain the reducibility of the triangle.
\begin{corollary}
	\label{reducible-triangle}
	The triangle is reducible.
\end{corollary}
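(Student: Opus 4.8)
The plan is to combine the 3-compatibility of the node-to-triangle extension, already established in \Cref{3-compatible-triangle}, with the reducibility criterion of \Cref{compatible-to-reducible} and the structural \Cref{node-reductions}. Concretely, I would take $(X,Y)$ to be the extension of \Cref{3-compatible-triangle}, so that $c(X)$ is a single vertex and $c(Y)$ is the triangle, and I would note that this extension is 3-compatible. It then suffices, by \Cref{compatible-to-reducible}, to verify that for every 3-connected cubic graph $H$ containing the triangle as a subgraph, either $H$ already has a 3-decomposition, or the triangle is induced in $H$ and the reduction $G\coloneqq H[Y\to X]$ is a simple, 3-connected graph.

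The next step is to observe that the template graph $Y$ of the triangle has exactly three outer vertices: each of the three triangle vertices has degree~$2$ inside the triangle and degree~$3$ in the cubic graph $H$, hence contributes precisely one outer vertex. Since $X$ is the template graph of the $K_1$, this is exactly the situation of \Cref{node-reductions}. Applying that lemma immediately yields that the triangle is induced in $H$ and that $G=H[Y\to X]$ is 3-connected, and moreover that $G$ is simple whenever $H-V(c(Y))$ contains more than one vertex. In all such cases the second bullet of \Cref{compatible-to-reducible} is satisfied.

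The only remaining case to dispose of is when $H-V(c(Y))$ consists of a single vertex (it cannot be empty, as $H$ is cubic and the triangle has all degrees equal to~$2$). Then $|V(H)|=4$, and the only 3-connected cubic graph on four vertices is $K_4$. Here I would appeal to the first bullet of \Cref{compatible-to-reducible}: $K_4$ has a 3-decomposition, obtained by taking any spanning star as $T$, whereupon the three remaining edges form a triangle, which is $2$-regular, so $(T,C,M)$ with $C$ this triangle and $M=\emptyset$ works. Combining the two cases, every 3-connected cubic $H$ containing the triangle meets one of the two alternatives, and \Cref{compatible-to-reducible} concludes that the triangle is reducible.

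I expect the main (and indeed only) subtle point to be precisely this boundary case $H=K_4$, where the reduction collapses the graph and would fail to produce a larger simple graph; this is exactly what the escape clause ``$H$ has a 3-decomposition'' in \Cref{compatible-to-reducible} is designed to absorb, so the argument goes through without further difficulty.
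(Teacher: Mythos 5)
Your proposal is correct and follows essentially the same route as the paper: invoke the 3-compatibility from \Cref{3-compatible-triangle}, apply \Cref{compatible-to-reducible}, and discharge the structural conditions via \Cref{node-reductions}, with the boundary case $H=K_4$ absorbed by the first alternative of \Cref{compatible-to-reducible}. The only (harmless) difference is that you exhibit an explicit 3-decomposition of $K_4$ (spanning star plus triangle), whereas the paper cites \Cref{small-graphs-3-dec}; your version is slightly more self-contained.
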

\begin{proof}
	Here we regard the inverse $(Y,X)$-reduction shown in \Cref{fig:extension-reduction-triangle}.
	Let $H$ be a simple 3-connected cubic graph containing a triangle $u_1u_2u_3u_1$.
	By \Cref{compatible-to-reducible} it suffices to prove that the triangle is induced and that the $(Y,X)$-reduction $G$ is simple and 3-connected.
	In fact, we may assume that $H\neq K_4$ as the $K_4$ has a 3-decomposition (for example by \Cref{small-graphs-3-dec}).
	But all these properties follow directly from \Cref{node-reductions}.
\end{proof}

Now that we have illustrated the general concept, let us fix some notation for the remainder of this section.
Whenever we regard a transformation, we denote the smaller graph by $X$ and the larger one by $Y$, unless explicitly stated otherwise.
Furthermore, when we prove that an extension is 3-compatible, we write $G$ for the smaller graph containing the core of $X$ and $H=G[X\to Y]$ is an $(X,Y)$-extension of $G$.
The 3-decomposition of $G$ is $(T,C,M)$ and we want to construct a 3-decomposition $(T',C', M')$ for $H$.

Next, we look at three further examples of 3-compatible extensions that are straight-forward.
As such the proof of the following lemma can be found in the appendix.
\begin{lemma}
	\label{3-compatible-naive-extensions}
	The extensions shown in \Cref{fig:extension-reduction-k23,fig:extension-reduction-claw-square,fig:extension-reduction-edge-domino} are 3-compatible.
\end{lemma}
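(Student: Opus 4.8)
The plan is to follow the template of the proof of \Cref{3-compatible-triangle} for each of the three extensions. I fix a cubic graph $G$ with a 3-decomposition $(T,C,M)$, form the extension $H\coloneqq G[X\to Y]$, and build a 3-decomposition $(T',C',M')$ of $H$ by recolouring only the edges of $c(Y)$ and the new edges joining $c(Y)$ to $H-c(Y)$. Throughout, every edge of $H$ lying entirely outside $c(Y)$ keeps its colour, and each new edge at an outer vertex inherits the colour that the corresponding edge of $G$ carried at the boundary of $c(X)$. Since at every vertex the only admissible local configurations are one tree- and two cycle-edges, two tree- and one matching-edge, or three tree-edges (\Cref{fig:behaviour-node}), preserving these colours automatically keeps the decomposition valid at every vertex of $H-c(Y)$. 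The lemma therefore reduces to a finite check: enumerate the admissible colourings of the edges of $c(X)$ together with the edges crossing its boundary, and for each exhibit a completing colouring of $c(Y)$, drawn in the style of \Cref{fig:3-dec-extension-triangle}.

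For \Cref{fig:extension-reduction-k23} the core of $X$ is a single node and $c(Y)$ is the $K_{2,3}$, so the behaviours are exactly the three of \Cref{fig:behaviour-node}, up to the symmetry permuting the three outer vertices. First I would record, for each behaviour, an explicit 3-decomposition of the $K_{2,3}$ using its two degree-$3$ vertices and three degree-$2$ vertices, and verify the three defining properties.

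For \Cref{fig:extension-reduction-edge-domino} (here $c(X)$ is the $K_2$ and $c(Y)$ is the domino) and \Cref{fig:extension-reduction-claw-square} (here $c(X)$ is the square and $c(Y)$ is the claw-square) the core of $X$ has several vertices, so there are more behaviours. I would first prune the list using the automorphisms of the core: the interchange of the two ends of the edge and of the two crossing edges at each end for the domino, and the symmetries of the square and of the claw-square. It is convenient to classify the admissible colourings by the colour of the core edge(s). For the edge $ab$: if $ab$ is a matching-edge then all four crossing edges are forced into $T$; if $ab$ is a cycle-edge then each end carries one crossing tree-edge and one crossing cycle-edge; and if $ab$ is a tree-edge a handful of sub-cases remain. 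The square splits analogously according to its pattern of tree-, cycle-, and matching-edges. For each surviving case I would display the completing colouring of $c(Y)$.

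That $C'$ is $2$-regular and $M'$ is a matching are local conditions at the vertices of $c(Y)$ and can be read off each figure. The property demanding genuine care, and the main obstacle, is that $T'$ is a spanning tree. Deleting $V(c(X))$ from $G$ breaks $T$ into a forest with $1-|V(c(X))|+i+x$ components, where $i$ and $x$ count the tree-edges of $T$ inside $c(X)$ and crossing its boundary, respectively; because $T$ is a tree, these components are reached by the crossing tree-edges in a tree-compatible way. In each case I must check that the tree-edges chosen inside $c(Y)$, together with the unchanged crossing tree-edges, reconnect precisely these components and all of $V(c(Y))$ into a single tree without closing a cycle. The number of internal tree-edges required is pinned down by the above count, and acyclicity together with connectivity is then confirmed by inspecting the relevant figure. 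It is this bookkeeping across all cases, rather than any single difficult idea, that makes the argument lengthy, which is why it is deferred to the appendix.
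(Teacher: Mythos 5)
Your proposal matches the paper's proof in essentially every respect: the paper also handles this lemma by enumerating the admissible local behaviours of a 3-decomposition at the vertex, the edge, and the square (up to the symmetries shared by $c(X)$ and $c(Y)$) and exhibiting, for each, an explicit completing colouring of the $K_{2,3}$, the domino, and the claw-square, with the spanning-tree verification amounting to exactly the component bookkeeping you describe. The only difference is cosmetic: to cut the raw list of square behaviours down further, the paper additionally uses local ``switching'' modifications of the decomposition of $G$ itself rather than only core automorphisms, but this is a labour-saving device, not a new idea.
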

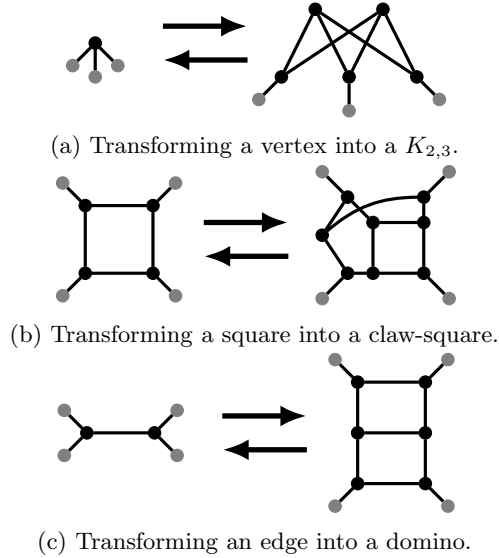
\begin{figure}[htb]
	\centering
	\subcaptionbox[]{Transforming a vertex into a $K_{2,3}$.\label{fig:extension-reduction-k23}}[\textwidth][c]
	{
		\begin{tikzpicture}[scale=.9]
			\drawVertexDown{0}{0}{1}{B}{{black,black,black}}{black}{black!50}
			\transformationArrow{1}{0.25}{1.25}{2}
			\transformationArrow{2.25}{-0.25}{-1.25}{2}
			\drawKTwoThree{3.75}{0}{1}{A}{{black,black,black,black,black,black,black,black,black}}{black}{black!50}
		\end{tikzpicture}	}
	\subcaptionbox[]{Transforming a square into a claw-square.\label{fig:extension-reduction-claw-square}}[\textwidth][c]
	{
		\begin{tikzpicture}[scale = .9]
			\drawSquare{0}{0}{1}{B}{{black,black,black,black,black,black,black,black}}{black}{black!50}
			\transformationArrow{1.25}{0.25}{1.25}{2}
			\transformationArrow{2.5}{-0.25}{-1.25}{2}
			\drawGFive{3.75}{0.25}{0.75}{A}{{black,black,black,black,black,black,black,black,black,black,black,black,black,black}}{black}{black!50}
		\end{tikzpicture}	}
	\subcaptionbox[]{Transforming an edge into a domino.\label{fig:extension-reduction-edge-domino}}[\textwidth][c]
	{
		\begin{tikzpicture}[scale = .9]
			\drawEdge{0}{0}{1}{B}{{black,black,black,black,black}}{black}{black!50}
			\transformationArrow{1.5}{0.25}{1.25}{2}
			\transformationArrow{2.75}{-0.25}{-1.25}{2}
			\drawDomino{4}{0}{1}{A}{{black,black,black,black,black,black,black,black,black,black,black}}{black}{black!50}
		\end{tikzpicture}	}
	\caption{Three straight-forward 3-compatible extensions.
	\label{naively-extendable-extensions}}
\end{figure}

By completing the second step for the $K_{2,3}$ and the claw-square we get reducibility for the next two configurations.
\begin{corollary}
	\label{reducible-k23-pet-g5}
	The $K_{2,3}$ and the claw-square are reducible.
\end{corollary}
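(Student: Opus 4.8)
The plan is to complete step~(ii) of the reducibility procedure for each of the two configurations, since step~(i)---the 3-compatibility of the two extensions in \Cref{fig:extension-reduction-k23,fig:extension-reduction-claw-square}---is already supplied by \Cref{3-compatible-naive-extensions}. For each configuration I would therefore invoke \Cref{compatible-to-reducible}, whose hypotheses require that in every simple 3-connected cubic graph $H$ containing the configuration, either $H$ already has a 3-decomposition, or the configuration is induced and the associated reduction yields a simple 3-connected graph. The two cases are then dispatched by the two structural reduction lemmas of \Cref{sec:ext-red}, the only real difference being which lemma applies and whether a degenerate small case must be handled separately.

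For the $K_{2,3}$ the relevant extension (\Cref{fig:extension-reduction-k23}) turns a single vertex into a $K_{2,3}$, so the reduction target $X$ is the template graph of the $K_1$. In any cubic host, the two degree-3 vertices of the $K_{2,3}$ send no edges outside while each of its three degree-2 vertices sends exactly one, so the template graph $Y$ of the $K_{2,3}$ has exactly three outer vertices. This is precisely the setting of \Cref{node-reductions}, which I would apply to conclude that the $K_{2,3}$ is induced and that $G \coloneqq H[Y \to X]$ is 3-connected. The one point demanding care is simplicity: \Cref{node-reductions} guarantees $G$ simple only when $H - V(c(Y))$ has more than one vertex. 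As $c(Y)$ has five vertices and $H$ is cubic, hence of even order at least six, the exceptional case is exactly $|V(H)| = 6$; there the three edges leaving the $K_{2,3}$ all meet the unique outside vertex, forcing $H = K_{3,3}$, which satisfies the conjecture by \Cref{small-graphs-3-dec}. Thus the first option of \Cref{compatible-to-reducible} covers this case and the second option covers all others, giving reducibility of the $K_{2,3}$.

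For the claw-square the argument is cleaner, with no exceptional case. The extension (\Cref{fig:extension-reduction-claw-square}) replaces a square by the claw-square, so here $X$ is the template graph of the square. The claw-square has eight core vertices and four outer vertices, whence $|V(c(Y))| = 8 \geq 5$ and $Y$ has four outer vertices---exactly the hypotheses of \Cref{square-reductions}. That lemma then immediately yields that the claw-square is induced and that $G \coloneqq H[Y \to X]$ is simple and 3-connected, independently of the order of $H$, and a single application of \Cref{compatible-to-reducible} finishes the claim.

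Since the analytic content is already packaged in the 3-compatibility lemma and in the two reduction lemmas, I expect no genuinely hard step: the only obstacle is the bookkeeping of the degenerate $K_{2,3}$ case, where one must verify both that the loss of simplicity can occur only when the reduction collapses the $K_{2,3}$ onto a single outside neighbour, and that the resulting graph is small enough to fall under \Cref{small-graphs-3-dec}. Everything else is a direct invocation of \Cref{node-reductions}, \Cref{square-reductions}, and \Cref{compatible-to-reducible}.
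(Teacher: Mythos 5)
Your proposal is correct and follows the paper's argument exactly: both invoke \Cref{compatible-to-reducible} together with \Cref{node-reductions} for the $K_{2,3}$ and \Cref{square-reductions} for the claw-square, and your explicit treatment of the $|V(H)|=6$ case forcing $H=K_{3,3}$ is precisely the degenerate case the paper dispatches via \Cref{small-graphs-3-dec}. No differences of substance.
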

\begin{proof}
	Regard the $(Y,X)$-reductions in \Cref{fig:extension-reduction-k23,fig:extension-reduction-claw-square}.
	By \Cref{small-graphs-3-dec} and \Cref{compatible-to-reducible} it suffices to prove that, given a 3-connected cubic graph $H$ with $|H|>6$ containing the core of $Y$, $c(Y)$ is induced and the reduction $G\coloneqq H[Y\to X]$ is simple and 3-connected.
	This holds for the $K_{2,3}$ by \Cref{node-reductions} (since we excluded the $K_{3,3}$) and for the claw-square by \Cref{square-reductions}.
\end{proof}

Next we take a look at the \petv{} and the twin-house.
\begin{lemma}
	\label{3-compatible-petv}
	The extension shown in \Cref{fig:extension-reduction-petv} is 3-compatible.
\end{lemma}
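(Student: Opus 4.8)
The plan is to mirror the argument used for the triangle in \Cref{3-compatible-triangle}. Since the \petv{} has exactly three vertices of degree~$2$ (the former neighbours of the deleted Petersen vertex), its template graph $Y$ has three outer vertices, and the smaller graph $X$ is the template of the $K_1$; thus the extension in \Cref{fig:extension-reduction-petv} replaces a single node $u$ by the \petv. As in the triangle case, a $3$-decomposition $(T,C,M)$ of $G$ can behave at $u$ in only three ways up to symmetry, with the three edges at $u$ coloured $(T,C,C)$, $(T,T,M)$, or $(T,T,T)$ (see \Cref{fig:behaviour-node}), and the inherited colours of these edges become the colours of the three boundary edges joining the gadget to $G-u$. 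It therefore suffices to extend the decomposition across the gadget in each of these three cases. I would first record that the automorphism group of the \petv{} acts as the full symmetric group $S_3$ on its three degree-$2$ vertices (this is inherited from the vertex stabiliser in the automorphism group of the Petersen graph), so the three attachment points are interchangeable and one colouring per behaviour type suffices.

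For each behaviour I would exhibit an explicit colouring of the twelve edges of the \petv{} and set $T' = T - I(X) + (\text{green gadget edges})$, and analogously for $C'$ and $M'$, exactly as in the triangle proof. The verification then has three parts. The blue edges form a matching once every gadget vertex is checked to carry at most one blue edge; the red edges form a disjoint union of cycles once every gadget vertex has red-degree $0$ or $2$, so that together with $C$ they close up; and, most delicately, the green edges must form a spanning tree of $H$. Here I would reuse the reconnection argument from the triangle case: deleting $u$ from $T$ splits it into the branches formerly hanging at the green edges of $u$, the green boundary edges reattach these branches to the gadget, and a short count shows that in each of the three cases exactly eight of the twelve internal edges must be green and must form a spanning tree of the nine gadget vertices. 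One then checks that the displayed green edges are acyclic and connected, which guarantees that $T'$ is an acyclic spanning subgraph, hence a spanning tree.

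The key structural observation, and the reason the cases split cleanly, is a girth argument. Because the \petv{} is an induced subgraph of the Petersen graph it has girth~$5$, while a spanning tree of its nine vertices uses eight of its twelve edges and leaves only four non-tree edges. Four edges are too few to close a cycle inside the gadget, so any red cycle must leave and re-enter the \petv{} through red boundary edges. In the $(T,C,C)$ behaviour two boundary edges are red, so the four co-tree edges may contain a red path joining the two red attachment points together with a blue matching, and this case is the most flexible. In the $(T,T,M)$ and especially the $(T,T,T)$ behaviours there are \emph{no} red boundary edges, so no red edge is permitted at all and all four co-tree edges must form a matching; equivalently, one must produce a spanning tree of the \petv{} whose four non-tree edges are pairwise disjoint, with the one uncovered vertex forced to be of type $(3,0,0)$ in the $(T,T,T)$ case. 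I expect the main obstacle to be exhibiting such a spanning-tree-with-matching-complement and verifying its acyclicity and the induced vertex types by hand in the comparatively dense \petv; the $S_3$-symmetry is what keeps the number of subcases manageable. Once a working colouring is displayed for each of the three behaviours, $3$-compatibility follows, since the construction turns any $3$-decomposition of $G$ into a $3$-decomposition $(T',C',M')$ of $H = G[X\to Y]$ regardless of how the outer vertices are identified in $G$.
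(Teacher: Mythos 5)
Your overall architecture is the paper's: reduce to the three local behaviours of \Cref{fig:behaviour-node} via the $S_3$-symmetry of the \petv{} on its three attachment vertices, and extend each one explicitly. The $(T,C,C)$ and $(T,T,T)$ cases indeed work exactly as you describe (for $(T,T,T)$ a spanning tree of the \petv{} with matching complement does exist, e.g.\ with complement $\{u_6u_8,\,u_7u_9,\,u_4u_3,\,u_5u_1\}$ in the labelling of \Cref{fig:extension-reduction-petv}). The gap is in the $(T,T,M)$ case, which is precisely the one the paper has to treat by hand. Your girth argument forces you to look for a spanning tree of the nine gadget vertices whose four complementary edges form a matching avoiding the attachment vertex of the blue boundary edge. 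No such object exists: if the blue boundary edge attaches at $u_5$, then $\petv-u_5$ has exactly two perfect matchings (each degree-2 vertex among $u_6,u_7,u_8,u_9$ has only two possible partners, and the choices propagate), and for both of them the complementary eight edges contain a $5$-cycle (the inner pentagon $u_1u_4u_3u_2u_5u_1$ in one case, the cycle $u_6u_7u_2u_5u_1u_6$ in the other), so the complement is never a tree. So the colouring you plan to exhibit for this case cannot be found, and the step ``no red boundary edges, hence no red edges at all'' is the culprit.

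What the paper does instead is abandon the requirement that the gadget realise the same boundary assignment: it promotes the blue boundary edge $v_2u_5$ to a \emph{tree} edge (deleting the $M$-edge $uw$ of $G$ altogether) and declares the inner pentagon $u_1u_4u_3u_2u_5u_1$ to be a brand-new cycle of $C'$. This is consistent because a cycle lying wholly inside the gadget needs no red boundary edges; your deduction excluded it only because you assumed the green edges inside the gadget form a spanning tree of all nine vertices with exactly four co-tree edges. Once the boundary edge at $u_5$ is green, the internal green part is a two-component forest with seven edges (with $u_5$ isolated internally and hanging off the tree through $v_2$), the three components of $T-uv_1-uv_3$ are reconnected without creating a cycle, and five internal edges are free to form the red pentagon. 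To repair your proof you must allow this recolouring of the boundary edge in the $(T,T,M)$ case; as written, the proposal cannot be completed.
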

\begin{figure}[ht]
	\centering
	\subcaptionbox{Transforming a vertex into a \petv.\label{fig:extension-reduction-petv}}[.5\textwidth][c]
	{
		\begin{tikzpicture}[scale=.95]
			\drawVertexDown{0}{0}{1}{B}{{black,black,black}}{black}{black!50}
			\node at (0,0.25) {$u$};
			\transformationArrow{1}{0.25}{1.25}{2}
			\transformationArrow{2.25}{-0.25}{-1.25}{2}
			\drawPetMinusV{3.75}{-0.25}{0.5}{A}{{black,black,black,black,black,black,black,black,black,black,black,black,black,black,black}}{black}{black!50}
			\node at (3.55,0.35) {$u_1$};
			\node at (4,0.35) {$u_2$};
			\node at (3.15,-0.2) {$u_3$};
			\node at (4.35,-0.2) {$u_4$};
			\node at (3.5,-0.75) {$u_5$};
			\node at (3.05,0.75) {$u_6$};
			\node at (4.5,0.75) {$u_7$};
			\node at (2.5,-0.5) {$u_8$};
			\node at (5,-0.5) {$u_9$};
		\end{tikzpicture}	}
	\hfill
	\subcaptionbox{Extending a local behaviour at $u$.\label{fig:extension-petv-not-naive}}[.475\textwidth][c]
	{
		\begin{tikzpicture}[scale=.95]
			\drawVertexDown{0}{0}{1}{B}{{mcol,tcol,tcol}}{black}{black!50}
			\transformationArrow{1}{0}{1.25}{2}
			\drawPetMinusV{3.75}{-0.25}{0.5}{A}{{tcol,tcol,tcol,ccol,ccol,ccol,ccol,ccol,tcol,tcol,tcol,tcol,tcol,tcol,tcol}}{black}{black!50}
		\end{tikzpicture}	}
	\caption{Transforming the \petv.}
\end{figure}
\begin{proof}
	We note that the \petv, like the triangle, is symmetric.
	Thus, we only need to check the three possible behaviours seen in \Cref{fig:behaviour-node}.
	Of these, the first and the last are straight-forward and can be found in the appendix.
	This leaves the second one, where an $M$-edge is at the single vertex, and we extend the decomposition as shown in~\Cref{fig:extension-petv-not-naive}.
	
	Note that the removal of the edge $uv_1$ and $uv_3$ from $T$ (where the vertex names are taken from \Cref{fig:extension-reduction-petv}) creates three connected components, one containing only $u$ and one with $v_2$ and $v_3$, respectively.
	By adding all edges (including $v_2u_5$) but those on the cycle $u_1u_4u_3u_2u_5u_1$ to this forest we obtain a spanning tree~$T'$ and adding the excluded cycle to $C$ yields $C'$.
	Now $(T',C',M\setminus\{v_2u_5\})$ is a 3-decomposition of $H$.
\end{proof}

The twin-house is reduced to the square, which has significantly more possible local behaviours than the $K_1$ or the $K_2$.
In order to reduce the amount of cases that need to be extended, we show that some can be transformed into one another.
Again, this is found in the appendix and we only show how to deal with the problematic cases in the proofs here.
\begin{lemma}
	\label{3-compatible-g4}
	The extension shown in \Cref{fig:extension-reduction-twin-house} is 3-compatible.
\end{lemma}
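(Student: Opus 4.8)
The plan is to follow the two-step procedure established for the triangle and formalised in \Cref{compatible-to-reducible}: first exhibit a concrete $(X,Y)$-extension in which $c(X)$ is the square and $c(Y)$ is the twin-house, then enumerate the possible local behaviours of a 3-decomposition $(T,C,M)$ on the square and extend each one across the edges into a valid 3-decomposition $(T',C',M')$ of the twin-house. Since the twin-house has four outer vertices and at least five core vertices, reducibility will then follow by combining 3-compatibility with \Cref{square-reductions} and \Cref{small-graphs-3-dec}, exactly as in \Cref{reducible-k23-pet-g5}. So the heart of the lemma is purely the compatibility claim.

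First I would fix the labelling of the four outer (grey) vertices $v_1,\dots,v_4$ of the square's template graph and record, for each vertex of the square, which of its three incident edges is a $T$-edge, a $C$-edge, or an $M$-edge. Because $T$ restricted to the square is a forest, $C$ restricted to it has maximum degree~$2$, and $M$ is a matching, the admissible colourings of the four square-edges together with the four pendant edges $v_iu$ are heavily constrained. I would then appeal to the reduction-of-cases argument promised in the appendix (the paragraph preceding the lemma states that some behaviours can be transformed into one another): using the symmetries of the square and these transformations, the full list of local behaviours collapses to a small number of representatives. For each representative I would write $T' = T - I(X) + (\text{chosen green edges of the twin-house})$, and analogously build $C'$ and $M'$, and then verify the three defining properties — that $T'$ is a spanning tree, that $C'$ is $2$-regular, and that $M'$ is a matching — by checking that adding the twin-house's internal green edges reconnects the components of $T - I(X)$ into a single tree while the leftover edges close up into cycles.

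The main obstacle I expect is the bookkeeping of the connectivity of $T'$: when the two ``houses'' of the twin-house each contain a degree-$3$ vertex, one must route the green edges so that the interior of the twin-house is spanned by a subtree that attaches to every component of $T-I(X)$ exactly once, with no green cycle created. The genuinely hard cases are those in which the behaviour on the square already uses several $T$-edges on non-adjacent corners, because then the forest $T-I(X)$ may have up to four components that all need to be stitched together through the twin-house's limited interior edge set without introducing a cycle or a degree-$2$ tree vertex; these are precisely the cases the authors single out to handle in the main text rather than deferring to the appendix. I would dispatch each such case by explicitly naming which internal edges of the twin-house become green, red, and blue, and confirming the count of components drops to one.

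Concretely, after building $(T',C',M')$ for every representative behaviour, the verification reduces to the observation that in each case the number of green edges added equals $|I(Y)| + (\text{number of components to merge}) - 1$ so that $T'$ is a tree, and that every vertex of the twin-house ends with the correct incidences; the remaining edges automatically form $C'$ (a disjoint union of cycles) and $M'$ (a matching). This shows $H = G[X\to Y]$ has a 3-decomposition whenever $G$ does, establishing 3-compatibility. I would present the problematic representatives as labelled sub-figures in the style of \Cref{fig:3-dec-extension-triangle} (green/red/blue edges), stating that the straightforward representatives are deferred to the appendix, mirroring the treatment of the \petv{} and matching the paper's stated convention of focusing on the challenging cases in the main body.
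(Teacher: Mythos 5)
Your overall framework---enumerate the local behaviours of a 3-decomposition on the square, reduce them by symmetry and switching, extend each representative into the twin-house, and verify the tree/cycle/matching properties---matches the paper's, and it does dispose of the majority of behaviours: those are exactly the na\"ively extendable ones that the paper defers to the appendix. The gap is in your treatment of the hard cases. You propose to ``dispatch each such case by explicitly naming which internal edges of the twin-house become green, red, and blue'', i.e.\ to produce one fixed recolouring of the twin-house's interior per local behaviour. For the two behaviours the paper isolates in \Cref{fig:bad-forests-twin-house} this is impossible. In the behaviour where $u_1u_2\in M$, the three remaining square edges are in $T$, and the pendant edges at $u_3$ and $u_4$ are $M$-edges, every vertex of the twin-house must still meet a tree edge; since $u_3$ and $u_4$ already carry a matching edge, both of their interior edges would be forced into $T'$, putting the entire 4-cycle $u_5u_3u_6u_4u_5$ into the tree --- a contradiction. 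No colouring of the interior edges that preserves the roles of the pendant edges can work. The paper's fix is to recolour the two pendant $M$-edges at $u_3,u_4$ as tree edges, which is legitimate only because of the global observation that $u_3$ and $u_4$ are isolated vertices of the forest $T$ minus the square's tree edges, so reattaching them through formerly-$M$ edges cannot close a cycle in $T'$.

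Similarly, for the behaviour with $T$-edges $u_1u_2$ and $u_3u_4$, $M$-edges $u_1u_3$ and $u_2u_4$, and all pendants in $T$, a single colouring does not suffice: which interior edges of the twin-house must become green depends on how the three components of $T-\{u_1u_2,u_3u_4\}$ distribute the vertices $u_2,u_3,u_4$ (two essentially different sub-cases, each requiring its own set of green edges), and this is information about the global structure of the spanning tree that is \emph{not} determined by the local behaviour on the template graph. Your proposal implicitly assumes the extension is a function of the local behaviour alone; the whole point of these two cases is that it is not. To close the gap you need to (i) let the chosen extension depend on the component structure of $T$ outside the configuration, arguing by symmetry to limit the sub-cases, and (ii) permit modifying the decomposition on edges leaving the configuration (promoting pendant $M$-edges to tree edges), in each instance supplying a global acyclicity argument for the resulting $T'$.
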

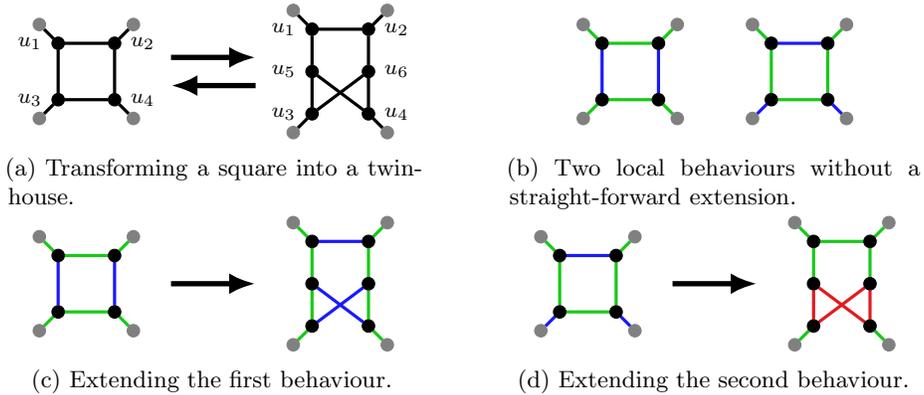
\begin{figure}[htb]
	\centering
	\subcaptionbox[]{Transforming a square into a twin-house.\label{fig:extension-reduction-twin-house}}[.45\textwidth][c]
	{
		\begin{tikzpicture}[scale=.75]
			\drawSquare{0}{0}{1}{B}{{black,black,black,black,black,black,black,black}}{black}{black!50}
			\node at (-1,0.5) {$u_1$};
			\node at (1,0.5) {$u_2$};
			\node at (-1,-0.5) {$u_3$};
			\node at (1,-0.5) {$u_4$};
			\transformationArrow{1.5}{0.25}{1.5}{2}
			\transformationArrow{3}{-0.25}{-1.5}{2}
			\drawGFour{4.5}{0}{1}{A}{{black,black,black,black,black,black,black,black,black,black,black}}{black}{black!50}
			\node at (3.5,0.75) {$u_1$};
			\node at (5.5,0.75) {$u_2$};
			\node at (3.5,0) {$u_5$};
			\node at (5.5,0) {$u_6$};
			\node at (3.5,-0.75) {$u_3$};
			\node at (5.5,-0.75) {$u_4$};
		\end{tikzpicture}	}
	\hfill
	\subcaptionbox[]{Two local behaviours without a straight-forward extension.\label{fig:bad-forests-twin-house}}[.45\textwidth][c]
	{
		\begin{tikzpicture}[scale =.75]
			\drawSquare{0}{0}{1}{A}{{tcol,mcol,mcol,tcol,tcol,tcol,tcol,tcol}}{black}{black!50}
			\drawSquare{3}{0}{1}{B}{{mcol,tcol,tcol,tcol,tcol,tcol,mcol,mcol}}{black}{black!50}
			\phantom{\drawGFour{1}{0}{1}{A}{{black,black,black,black,black,black,black,black,black,black,black}}{black}{black!50}}
		\end{tikzpicture}	}
	\subcaptionbox[]{Extending the first behaviour.\label{fig:bad-forest-twin-house-1}}[.45\textwidth][c]
	{
		\begin{tikzpicture}[scale=.75]
			\drawSquare{0}{0}{1}{A}{{tcol,mcol,mcol,tcol,tcol,tcol,tcol,tcol}}{black}{black!50}
			\transformationArrow{1.5}{0}{1.5}{2}
			\drawGFour{4.5}{0}{1}{A}{{mcol,tcol,tcol,tcol,tcol,mcol,mcol,tcol,tcol,tcol,tcol}}{black}{black!50}
		\end{tikzpicture}	}
	\hfill
	\subcaptionbox[]{Extending the second behaviour.\label{fig:bad-forest-twin-house-2}}[.45\textwidth][c]
	{
		\begin{tikzpicture}[scale =.75]
			\drawSquare{0}{0}{1}{A}{{mcol,tcol,tcol,tcol,tcol,tcol,mcol,mcol}}{black}{black!50}
			\transformationArrow{1.5}{0}{1.5}{2}
			\drawGFour{4.5}{0}{1}{A}{{tcol,tcol,tcol,ccol,ccol,ccol,ccol,tcol,tcol,tcol,tcol}}{black}{black!50}
		\end{tikzpicture}	}
	\caption{Transforming the twin-house.}
\end{figure}
\begin{proof}
	For this extension, we regard the local behaviours in~\Cref{fig:bad-forests-twin-house} and refer to the appendix for the remaining ones.
	
	For the first forest, we note that the removal of the edge $u_1u_2$ (using the terminology from \Cref{fig:extension-reduction-twin-house}) from $T$ creates two connected components, one containing $u_1$ and one containing $u_2$.
	By symmetry we may assume that the component containing $u_2$ also contains $u_3$ and $u_4$.
	Once we remove the edge $u_3u_4$ as well, we end up with three components.
	One of these contains $u_1$ whereas the other two either contain $u_2,u_3$ and $u_4$ or $u_2,u_4$ and $u_3$.
	The first assignment we now make is illustrated in \Cref{fig:bad-forest-twin-house-1}, the second is analogous.
	By replacing the square by the twin-house and adding the edges $u_1u_5, u_5u_3, u_2u_6, u_6u_4$ or $u_1u_5, u_5u_4, u_2u_6, u_6u_3$ to the forest, we obtain a spanning tree $T'$ of $H$.
	The missing edges are $u_1u_2, u_5u_4, u_6u_3$ or $u_1u_2, u_5u_3, u_6u_4$, which form a matching $M'$ together with $M\setminus\{u_1u_3, u_2u_4\}$.
	Thus $(T',C,M')$ is a 3-decomposition of the extended graph.
	
	For the second forest, removing the three edges $u_1u_3, u_3u_4, u_4u_2$ from $T$ yields a spanning forest with four components, two of which are the isolated vertices $u_3, u_4$.
	By replacing the square by the twin-house and adding the edges $u_1u_2, u_1u_5, u_2u_6$ to the forest, we obtain a spanning tree of $G-\{u_3,u_4\}$.
	We can connect these last two vertices by using their incident edges in $M$ to obtain a spanning tree $T'$ of $G$ as shown in \Cref{fig:bad-forest-twin-house-2}.
	We then take $M'$ to be the set $M$ without the edge $u_1u_2$ and the two edges we just used, which is still a matching.
	The remaining edges in the twin-house form a $C_4$ which we add to $C$ to obtain $C'$.
	Then $(T',C',M')$ is a 3-decomposition of $G$.
\end{proof}

We now obtain reducibility for both.
\begin{corollary}
	\label{reducible-g4}
	The \petv{} and the twin-house are reducible.
\end{corollary}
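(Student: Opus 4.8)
The plan is to derive both reducibility statements from the 3-compatibility results just proved, \Cref{3-compatible-petv} and \Cref{3-compatible-g4}, by feeding them into the reducibility criterion \Cref{compatible-to-reducible}, precisely as in the proofs of \Cref{reducible-triangle} and \Cref{reducible-k23-pet-g5}. For each configuration, \Cref{compatible-to-reducible} reduces the claim to showing that every 3-connected cubic graph $H$ containing the configuration either already admits a 3-decomposition, or has the configuration induced with the corresponding reduction yielding a simple 3-connected graph.

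For the \petv{} I would use the extension of \Cref{fig:extension-reduction-petv}, whose template graph $Y$ has three outer vertices (the neighbours of the vertex deleted from the Petersen graph) and whose reduction target $X$ is the template graph of the $K_1$. Here \Cref{node-reductions} does the work: it guarantees unconditionally that the \petv{} is induced and that $G = H[Y\to X]$ is 3-connected, and it guarantees simplicity of $G$ as soon as $H - V(c(Y))$ has more than one vertex. The sole obstruction is thus $|V(H)| = 10$, and in that case the three edges leaving the \petv{} must all meet the single remaining vertex, so $H$ is the Petersen graph, which has a 3-decomposition by \Cref{small-graphs-3-dec}. The two bullets of \Cref{compatible-to-reducible} therefore cover all $H$, and the \petv{} is reducible.

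For the twin-house I would use the extension of \Cref{fig:extension-reduction-twin-house}, whose template graph $Y$ has four outer vertices and a six-vertex core, with reduction target $X$ the template graph of the square. Since $|V(c(Y))| = 6 \geq 5$, \Cref{square-reductions} applies to every 3-connected cubic $H$ containing the twin-house and directly delivers the second bullet of \Cref{compatible-to-reducible}: the twin-house is induced and $G = H[Y\to X]$ is simple and 3-connected. Combined with \Cref{3-compatible-g4}, this gives reducibility; no small cases need to be excluded here, since \Cref{square-reductions} imposes no size restriction beyond $|V(c(Y))| \geq 5$.

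Since all of the conceptual difficulty is already absorbed into the 3-compatibility lemmas and the structural reduction lemmas, the corollary is essentially bookkeeping. The only point demanding genuine attention is the boundary case for the \petv: one must recognise that the unique way a nine-vertex core with three attachment points can fail the simplicity requirement of \Cref{node-reductions} is when the ambient graph is the Petersen graph, and then dispatch this single exception through \Cref{small-graphs-3-dec}. I expect this to be the main (and essentially the only) obstacle.
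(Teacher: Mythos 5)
Your proposal is correct and follows the paper's own proof exactly: both reducibility claims are obtained by combining the 3-compatibility lemmas with \Cref{compatible-to-reducible}, using \Cref{node-reductions} for the \petv{} and \Cref{square-reductions} for the twin-house. Your explicit treatment of the $|V(H)|=10$ boundary case (where $H$ is the Petersen graph, dispatched via \Cref{small-graphs-3-dec}) is in fact slightly more careful than the paper's two-line argument, which leaves this simplicity caveat of \Cref{node-reductions} implicit.
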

\begin{proof}
	The $(Y,X)$-reductions in \Cref{fig:extension-reduction-petv} and \Cref{fig:extension-reduction-twin-house} yield simple 3-con\-nect\-ed graphs by \Cref{node-reductions,square-reductions}.
	By \Cref{compatible-to-reducible} both graphs are reducible.	
\end{proof}

This just leaves the domino, whose reducibility is by far the most difficult to prove.
If we regard the reduction to the square again, then all but one case is straight-forward, which is the one shown on the left in \Cref{fig:bad-domino}.
However, this case is difficult to remedy and requires us to know more about the structure of the graph $G$ in which this occurs.
To obtain this information, we first attempt to reduce the domino to a single edge, for which the corresponding extension is 3-compatible by \Cref{3-compatible-naive-extensions}.
If this reduction yields a 3-connected graph, then we are done.
Otherwise we have acquired enough information to deal with the only problematic case occurring in the reduction to the square.
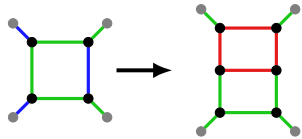
\begin{figure}
	\centering
	\begin{tikzpicture}
		\drawSquare{9}{-8}{0.75}{15}{{tcol,tcol,mcol,tcol,mcol,tcol,mcol,tcol}}{black}{black!50}
		\transformationArrow{9.75}{-8}{0.75}{1.5}
		\drawDomino{11.5}{-8}{0.75}{A}{{ccol,ccol,ccol,ccol,tcol,tcol,tcol,tcol,tcol,tcol,tcol}}{black}{black!50}
	\end{tikzpicture}	\caption{The bad local behaviour for the domino.
	\label{fig:bad-domino}}
\end{figure}

\begin{lemma}
	\label{3-compatible-square-domino}
	Let $(X,Y)$ be the extension and $(X,Z)$ be the reduction of the square shown in \Cref{fig:partly-compatible-square-domino}.
	Moreover, let $G$ be a 3-connected graph containing the core of $X$ such that its $(X,Z)$-reduction is not 3-connected.
	If $G$ has a 3-decomposition, then also the $(X,Y)$-extension of $G$ has a 3-decomposition.
	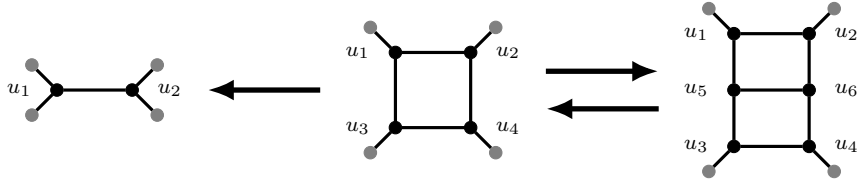
\begin{figure}[htb]
		\centering
		\begin{tikzpicture}
			\drawEdge{0}{0}{1}{C}{{black,black,black,black,black}}{black}{black!50}
			\node at (-1,0) {$u_1$};
			\node at (1,0) {$u_2$};
			\transformationArrow{3}{0}{-1.5}{2}
			\drawSquare{4.5}{0}{1}{B}{{black,black,black,black,black,black,black,black}}{black}{black!50}
			\node at (3.5,0.5) {$u_1$};
			\node at (5.5,0.5) {$u_2$};
			\node at (3.5,-0.5) {$u_3$};
			\node at (5.5,-0.5) {$u_4$};
			\transformationArrow{6}{0.25}{1.5}{2}
			\transformationArrow{7.5}{-0.25}{-1.5}{2}
			\drawDomino{9}{0}{1}{A}{{black,black,black,black,black,black,black,black,black,black,black}}{black}{black!50}
			\node at (8,0.75) {$u_1$};
			\node at (10,0.75) {$u_2$};
			\node at (8,0) {$u_5$};
			\node at (10,0) {$u_6$};
			\node at (8,-0.75) {$u_3$};
			\node at (10,-0.75) {$u_4$};
		\end{tikzpicture}		\caption{Transforming a square into a domino or an edge.
		\label{fig:partly-compatible-square-domino}}
	\end{figure}
\end{lemma}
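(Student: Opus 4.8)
The plan is to mine the failure of $3$-connectivity of the edge-reduction for structural information and then to use that information to repair the single local behaviour of a $3$-decomposition that does not extend to the domino.

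First I would understand the obstruction in $G':=G[X\to Z]$. Let $a$ and $b$ be the two vertices of $c(Z)$, where (reading off \Cref{fig:partly-compatible-square-domino}) $a$ carries the edges formerly leaving $u_1$ and $u_3$, while $b$ carries those leaving $u_2$ and $u_4$, the two being joined by the edge $ab$. Since a cubic graph is $3$-edge-connected exactly when it is $3$-connected, the hypothesis supplies a $2$-edge-cut of $G'$. I would rule out all but one position of this cut: any $2$-edge-cut avoiding both $a$ and $b$, and any cut of the form $\{e,ab\}$ with $e$ incident to $a$ or $b$, would already separate $G$ (the square links its four attaching edges in $G$ in the same all-to-all pattern that $a$, $b$, and $ab$ provide in $G'$), contradicting $3$-connectivity of $G$. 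The only survivor is a cut $\{ab,f\}$ with $f\in E(G-V(c(X)))$, and pulling it back through the square this is exactly a $3$-edge-cut $\{u_1u_2,\,u_3u_4,\,f\}$ of $G$ separating $\{u_1,u_3\}$ (together with the ``$f$-side'' $R_1$ of the rest) from $\{u_2,u_4\}$ (together with the remainder $R_2$); write $f=pq$ with $p\in R_1$ and $q\in R_2$.

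Next I would use the given $3$-decomposition $(T,C,M)$ of $G$. Every local behaviour of $(T,C,M)$ on the square other than the one in \Cref{fig:bad-domino} extends to the domino by the routine arguments deferred to the appendix (as for \Cref{3-compatible-naive-extensions}), so I may assume we are in the \emph{bad case}: $u_1u_2,u_1u_3,u_3u_4\in T$, $u_2u_4\in M$, the external edges at $u_1,u_3$ lie in $M$, and those at $u_2,u_4$ lie in $T$. I would then observe that, since $u_1$ and $u_3$ reach $R_1$ only through their two $M$-edges, the edge $f$ is forced into $T$ and is the unique $T$-edge leaving the $\{u_1,u_3\}$-side; in particular all three cut-edges $u_1u_2,u_3u_4,f$ are tree edges, and a count of the tree components on the two sides of the cut shows that neither $p$ nor $q$ can be a cycle vertex.

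Finally I would build a $3$-decomposition of $H:=G[X\to Y]$. The key point is that no extension keeping the external colours exists: these colours force $u_1u_2,u_1u_5,u_5u_3,u_3u_4$ into $T'$, and then the new vertex $u_6$ can be joined neither as a tree vertex (any two of its edges close a $4$-cycle through the forced path) nor by a pair of cycle edges (its neighbours are already tree- or matching-saturated). The cut edge $f$ supplies the missing slack. When $p$ and $q$ carry no $M$-edge I would promote $u_1w_1$ to $T'$ and demote $f$ to $M'$; this is a valid swap because adding $u_1w_1$ to $T$ closes a fundamental cycle through $f$, and the component count rules out $p,q$ being saturated cycle vertices. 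I then colour the domino by placing the top edge $u_1u_2$ and the middle edge $u_5u_6$ into $M'$ and the path $u_2u_6,\,u_6u_4,\,u_4u_3,\,u_3u_5,\,u_5u_1$ into $T'$; one checks directly that $T'$ is a spanning tree (this path with $R_1$ hanging off $u_1$ and the two components of $R_2$ hanging off $u_2$ and $u_4$), that $M'$ is a matching, and that $C'=C$ remains $2$-regular. The main obstacle is the remaining configuration, in which an end of $f$ is incident with an $M$-edge, so that $f$ cannot simply be recoloured: there $R_1$ can only be reconnected by following the matching edge at that end, and making this alternating re-routing terminate in a genuine $3$-decomposition—while keeping $C'$ $2$-regular and $M'$ a matching—is the delicate part of the argument.
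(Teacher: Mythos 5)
Your proposal is not complete: the case you defer at the end --- where an end of the cut edge $f$ is already covered by an $M$-edge, so that $f$ cannot be moved into the matching --- is precisely the case your method cannot handle as described, and ``making this alternating re-routing terminate'' is not a routine verification; nothing in your argument guarantees that following matching edges produces a valid swap rather than a cascade of conflicts. There is also an error earlier in the cut analysis: a $2$-edge-cut of $G[X\to Z]$ of the form $\{ab,av_i\}$ is \emph{not} excluded by $3$-connectivity of $G$, since it pulls back to a $3$-edge-cut $\{u_1u_2,u_3u_4,u_iv_i\}$ of $G$, which a $3$-connected cubic graph may well have. (This case can in fact be ruled out, but only by invoking the bad local behaviour: with $u_1v_1,u_3v_3\in M$ the tree $T$ could not reach the $v_3$-side of such a cut, so $T$ would be disconnected. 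Your stated justification is wrong even though the conclusion is salvageable.) Finally, your claim that ``no extension keeping the external colours exists'' is true but misleading as a motivation: the way out is not to recolour a distant edge $f$, but to recolour the external $M$-edges $u_1v_1$ and $u_3v_3$ themselves.

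That last point is exactly what the paper does, and it avoids your hard case entirely. Writing $H_2,H_4$ for the components of $T$ minus the three square tree-edges that contain $u_2$ and $u_4$, the paper shows that whenever $v_3\notin V(H_4)$ (or symmetrically $v_1\notin V(H_2)$) one can put one of the domino's two $4$-cycles into $C'$ and move $u_1v_1,u_3v_3$ into the tree, giving a direct extension; in the only remaining case ($v_1\in H_2$ and $v_3\in H_4$) the two fundamental cycles of $u_1v_1$ and $u_3v_3$ are disjoint and force the edge-reduction to be $3$-connected, contradicting the hypothesis of the lemma. So the non-$3$-connectivity assumption is used purely to kill the hard subcase, not as a source of an edge to recolour. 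Ironically, your own structural analysis almost finishes the proof along the paper's lines: since $T[R_1]$ is connected and attaches to the rest of $T$ only through $f$, both $v_1$ and $v_3$ land in the \emph{same} component $H_2$ or $H_4$, which is exactly the situation in which the paper's direct extension (or its mirror image) applies --- no swap involving $f$, and hence no matching conflict, is needed. As it stands, however, your write-up has a genuine gap in its final case and an incorrect justification in the cut analysis.
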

\begin{proof}
	Let $G$ be a graph as in the claim and let $(T,C,M)$ be a 3-decomposition of~$G$.
	Furthermore let $H$ be the extension $G[X\to Y]$ and $H'$ the reduction $G[X\to Z]$.
	If any local behaviour other than the one in \Cref{fig:bad-domino} occurs, we can extend the decomposition of $G$ to $H$.
	Thus, we restrict ourselves to the forest $T_X$ with $E(T_X)=\{v_2u_2, u_2u_1, u_1u_3, u_3u_4, u_4v_4\}$ (where we use the names from \Cref{fig:partly-compatible-square-domino} and denote the outer vertex incident to $u_i$ by $v_i$).
	
	If we remove the three edges of $T_X$ in the square, then we end up with four components, the vertices $u_1$ and $u_3$ and components $u_2\in H_2, u_4\in H_4$.
	If $v_3\notin V(H_4)$, then we take all edges except those on the cycle $u_1u_2u_6u_5u_1$ to be part of the tree component, as shown in \Cref{fig:bad-domino}.
	This results in a spanning tree since each of the five edges added connects different components and its complement is the matching $M\setminus\{u_1v_1, u_3v_3, u_2u_4\}$ and the cycles in $C$ together with $u_1u_2u_6u_5u_1$.
	This lets us assume that $v_3\in V(H_4)$.
	Note that a symmetric assignment (leaving the cycle $u_5u_6u_4u_3u_5$) lets us assume that $v_1\in V(H_2)$.
	Now let $C_1$ and $C_3$ be the unique cycles in $T+u_1v_1$ and $T+u_3v_3$, which are disjoint in the situation we are in now.
	
	We claim this shows that $H'$ is 3-connected, contrary to our assumption.
	The graph $H'$ is simple since the neighbours of $u_1$ are distinct, they are in different components of $T-u_1u_3$.
	The same holds for the neighbours of $u_2$.
	If $H'$ were not 3-connected, then a partition $(A',B')$ of $V(H')$ exists such that $E_{H'}(A',B')$ contains at most two edges.
	We now regard the partition $(A,B)$ of $V(G)$ obtained by adding $u_3$ to the set containing $u_1$ and $u_4$ to the set containing $u_2$.
	Next, we compare the sets $E_{H'}(A',B')$ and $E_{H}(A,B)$.
	An edge $xy\in E_{H'}(A',B')$ is also in $E_{H}(A,B)$ if neither of its ends is $u_1$ or $u_2$.
	If just one of them is, say $x$, then it corresponds to a unique edge in $E_{H'}(A',B')$, where $x$ is potentially replaced by the vertex $u_3$ or $u_4$.
	Only the edge $u_1u_2$ corresponds to multiple edges in $E_{H'}(A',B')$, namely to the edges $u_1u_2$ and $u_3u_4$.
	
	Since $G$ is 3-connected, we get that $|E_{H'}(A',B')| = 2,\ |E_{H}(A,B)|=3, \ u_1u_2\in E_{H'}(A',B')$, and $u_1u_2, u_3u_4\in E_{H}(A,B)$.
	This lets us assume that $u_1,u_3\in A$ and $u_2,u_4\in B$.
	Both cycles $C_1$ and $C_3$ contain an element of $A$ and one of $B$, resulting in a cut of these cycles.
	These cuts have at least two edges crossing, giving us a total of at least four since the cycles are disjoint, which is a contradiction.
\end{proof}

\begin{corollary}
	\label{reducible-domino}
	The domino is reducible.
\end{corollary}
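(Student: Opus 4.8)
The plan is to derive a contradiction from a minimal 3-connected counterexample $H$ that contains a domino, by combining the two transformations available for the domino: the square-to-domino extension (partly handled by \Cref{3-compatible-square-domino}) and the edge-to-domino extension (3-compatible by \Cref{3-compatible-naive-extensions}). First I would observe that, since the domino has six vertices and its template graph has four outer vertices, \Cref{square-reductions} applies: the domino is induced in $H$ and the reduction $G \coloneqq H[Y\to X]$ to the square is a simple 3-connected cubic graph. As $|V(G)| = |V(H)| - 2 < |V(H)|$, minimality gives a 3-decomposition of $G$.

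The crucial auxiliary object is the \emph{further} reduction of the domino to a single edge, $H' \coloneqq H[\text{domino}\to\text{edge}]$. I would record that $H'$ coincides with $G[\text{square}\to\text{edge}]$, since reducing the domino region to an edge directly, or via the intermediate square, contracts the same region with the same placement of the four outer edges (two per endpoint). The proof then splits according to whether $H'$ is 3-connected. If $H'$ is \emph{not} 3-connected, the hypotheses of \Cref{3-compatible-square-domino} are exactly met: $G$ contains the square, its square-to-edge reduction $G[\text{square}\to\text{edge}] = H'$ is not 3-connected, and $G$ has a 3-decomposition; the lemma then yields a 3-decomposition of $G[\text{square}\to\text{domino}] = H$, contradicting that $H$ is a counterexample.

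In the complementary case $H'$ is 3-connected, and here I would first argue that $H'$ must in fact be simple. The only way the reduction can introduce a parallel edge is for the two outer edges attached to a common endpoint of the new edge to share their external endpoint; but then that endpoint of $H'$ has only two distinct neighbours in the cubic graph $H'$, producing a 2-vertex-cut and contradicting 3-connectivity. Hence $H'$ is a simple 3-connected cubic graph with $|V(H')| = |V(H)| - 4 < |V(H)|$, so by minimality it has a 3-decomposition, and the 3-compatibility of the edge-to-domino extension (\Cref{3-compatible-naive-extensions}) transfers this to $H = H'[\text{edge}\to\text{domino}]$, a contradiction. Since both cases are impossible, no minimal counterexample contains a domino, so the domino is reducible.

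I expect the main obstacle to be the bad local behaviour of \Cref{fig:bad-domino}: for almost every decomposition of $G$ the square-to-domino extension is routine, and it is precisely this single configuration that cannot be extended locally, forcing the detour through the edge reduction. The delicate structural point is that the failure of 3-connectivity of $H'$ is exactly the extra information \Cref{3-compatible-square-domino} requires, while 3-connectivity of $H'$ automatically supplies the simplicity needed to invoke minimality; the two cases thereby dovetail to cover every possibility, which is what makes the argument close.
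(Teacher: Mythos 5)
Your proposal is correct and follows essentially the same route as the paper's proof: both split on whether the domino-to-edge reduction yields a 3-connected graph, handling that case via the 3-compatibility of the edge-to-domino extension and minimality, and otherwise passing through the square reduction so that \Cref{3-compatible-square-domino} applies. Your extra remarks (that the edge reduction of $H$ equals the edge reduction of $G$, and that 3-connectivity of $H'$ forces simplicity) are points the paper also makes, only more tersely.
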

\begin{proof}
	In light of the new situation, we cannot just apply Lemma~\ref{compatible-to-reducible}.
	Let $H$ be a minimal counterexample containing the domino, that is, $H$ is a 3-connected cubic graph without a 3-decomposition.
	Since all cubic graphs on six vertices have a 3-decomposition by \Cref{small-graphs-3-dec}, we may assume that $H$ has more vertices than the ones in the domino.
	Also, because $H$ is 3-connected, we may assume that the domino is induced.
	
	If we can replace the domino by a single edge without harming 3-connectivity as shown in \Cref{fig:extension-reduction-edge-domino}, then we obtain a 3-decomposition of $H$ by \Cref{3-compatible-naive-extensions}, a contradiction.
	(Note that all 3-connected cubic graphs are simple.)
	
	So we may assume that this is not the case.
	By replacing the domino by a square as shown in \Cref{fig:partly-compatible-square-domino}, we obtain a new graph $G$ which is simple and 3-connected by \Cref{square-reductions}.
	By minimality of $H$, $G$ has a 3-decomposition and satisfies the premise of \Cref{3-compatible-square-domino} (since the reduction of the square to an edge corresponds to the reduction of the domino in $H$ to an edge directly).
	This gives us a contradiction.
\end{proof}

\section{Na\"ive extensions and their complexity}
\label{sec: naive and complex}
Our goal in this section is twofold.
First we describe a method of determining whether an extension is 3-compatible.
In the case that it is not, we get a reduced amount of local behaviours that need to be checked manually.
This greatly reduces the amount of work required in the proofs in the previous section, in particular, all the cases in the appendix are covered by this method.
Finally, we prove that the problem solved by this method is \NP-complete.

\paragraph{Algorithmically checking 3-compatibility.} 
For the first part, assume we want to prove that an $(X,Y)$-extension is 3-compatible for template graphs $X,Y$.
Let $T_X$ be a spanning forest of $X$.
We say that $T_X$ is \emph{3-consistent} if
\begin{itemize}
	\item every component of $T_X$ contains an outer vertex and
	\item every component of $X-E(T_X)$ is a single vertex, a single edge, a cycle, or a path joining two outer vertices of $X$.
\end{itemize}
We denote the set of edges that form a component of $X-E(T_X)$ by $M_X$ and write $C_X$ for the union of the cycles in $X-E(T_X)$.
See \Cref{fig:3-consistent-forests-and-naive-extensions} for two examples of 3-consistent forests.

\begin{observation}
	\label{restriction-consistent}
	The requirements for a forest to be 3-consistent are all necessary for it to be a restriction of a 3-decomposition.
	In other words, if a cubic graph $G$ contains $c(X)$ as an induced subgraph and has a 3-decomposition $(T,C,M)$, then the restriction $T_X$ of $T$ to $X$ is 3-consistent.
\end{observation}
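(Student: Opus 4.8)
The plan is to verify the two defining conditions of 3-consistency directly, after first checking that $T_X$ is a spanning forest at all. I read $T_X$ as the spanning subgraph of $X$ whose edges are exactly those edges of $X$ — the core edges of $c(X)$ together with the edges joining $c(X)$ to the outer vertices — that correspond to $T$-edges of $G$ under the identification of $c(X)$ with its copy $R'\subseteq G$ and of the boundary edges of $R'$ with the edges to outer vertices. That $T_X$ is a spanning forest is immediate: its vertex set is all of $V(X)$, and any cycle in $T_X$ would have to avoid the outer vertices (which have degree $1$ in $X$) and hence consist of $T$-edges joining core vertices, contradicting that $T$ is acyclic.

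For the first condition I would argue by contradiction. Suppose some component $K$ of $T_X$ contains no outer vertex, so $V(K)$ corresponds to a set of core vertices. Since $X$ has outer vertices, there are edges of $G$ leaving $R'$, whence $V(R')\subsetneq V(G)$ and thus $V(K)\subsetneq V(G)$. As $T$ is a connected spanning tree of $G$, some $T$-edge $e$ leaves $V(K)$. If $e$ joins $V(K)$ to another core vertex, then (as $c(X)$ is induced) $e$ is a core edge lying in $T_X$, enlarging $K$, a contradiction; if $e$ leaves $R'$, then $e$ corresponds to an edge of $T_X$ from $K$ to an outer vertex, again contradicting that $K$ meets no outer vertex. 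Hence every component of $T_X$ meets $O(X)$.

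The second condition is where the real work lies, since it amounts to understanding how the cycles of $C$ and edges of $M$ are cut by the boundary of the template. First I would note that $X-E(T_X)$ has maximum degree $2$: an inner vertex $u$ has degree $3-\deg_T(u)\le 2$, and each outer vertex has degree at most $1$. Thus every component is a path (possibly a single vertex or a single edge) or a cycle, and cycles, single vertices, and single edges are all permitted. It remains to show that a path component with at least two edges joins two outer vertices, i.e.\ that no inner vertex is an endpoint of such a component. The key local fact is this: if an inner vertex $u$ has degree $1$ in $X-E(T_X)$ then $\deg_T(u)=2$, so $u$ has a unique non-tree edge $e$; were $e\in C$, the $2$-regularity of $C$ would force a second $C$-edge at $u$, so $e\in M$. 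Writing $e=uz$, the matching property together with the $2$-regularity of $C$ gives $\deg_{G-E(T)}(z)=1$ as well, so the component of $u$ in $X-E(T_X)$ is exactly the single edge determined by $e$ — whether $z$ lies in $R'$ (a core edge) or outside it (an edge to an outer vertex). Consequently an inner vertex can only be an endpoint of a single-edge component, so any path with at least two edges has both endpoints in $O(X)$, as required.

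The main obstacle is precisely this analysis of the second condition: one must track carefully how a cycle of $C$ or a matching edge crossing the boundary of $R'$ reappears inside the template, and the cleanest route is the local degree computation above rather than a global case analysis over the components of $G-E(T)$. Once that computation is in place, every component of $X-E(T_X)$ falls into one of the four allowed shapes, completing the verification that $T_X$ is 3-consistent.
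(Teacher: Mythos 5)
Your proof is correct. The paper states this as an observation and provides no proof at all, treating it as evident, so there is no argument to compare against; your write-up is a valid and complete filling-in of the details. All three steps check out: acyclicity of $T_X$ is inherited from $T$; a component of $T_X$ avoiding $O(X)$ would be separated from the rest of the spanning tree $T$, since (using that $c(X)$ is induced) every $T$-edge leaving such a component corresponds to an edge of $T_X$; and your local degree computation --- an inner vertex of degree $1$ in $X-E(T_X)$ has exactly one non-tree edge, which must be an $M$-edge whose other end also has degree $1$ in $G-E(T)$, so its component is a single edge --- correctly rules out inner vertices as endpoints of path components of length at least~$2$, leaving only the four allowed shapes.
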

Recall that $X$ is not necessarily a subgraph of $G$ (just the core of $X$ is), but we treat it as one since its edges can be identified with ones of $G$.
Thus, the restriction of $T$ to $X$ is the subgraph of $X$ containing the edges that correspond to ones in $T$.
Further, we remark that the restriction of $T$ to $X$ is not necessarily connected, and the restrictions of $C$ and $M$ to $X$ are $C_X$ and $M_X$, where $C_X$ may now contain paths in addition to cycles.

An \emph{assignment} is a function $f\colon O(X) \to \{\texttt{t},\texttt{c},\texttt{m}\}$. 
We say that $T_X$ \emph{realises}~$f$ if for each vertex $v\in O(X)$ its incident edge $e$ satisfies that $e\in T_X$ ($e\in C_X$, $e\in M_X$) if $f(v) = \texttt{t}$ ($f(v) = \texttt{c}$, $f(v) = \texttt{m}$).
\begin{definition}
	A 3-consistent forest $T_X$ is \emph{na\"ively extendable} to $Y$ if there exists a 3-consistent forest $T_Y$ of $Y$ such that
	\begin{itemize}
		\item $T_Y$ realises the same assignment as $T_X$ and
		\item two outer vertices of $X$ are in the same component of $T_X$ if and only if they are in the same component of $T_Y$.
	\end{itemize}
\end{definition}
An example of a na\"ively extendable forest is shown in \Cref{fig:3-consistent-forests-and-naive-extensions}.
\begin{figure}[htb]
	\centering
	\begin{tikzpicture}
		\drawEdge{0}{0}{1}{1}{{ccol,ccol,ccol,tcol,tcol}}{black}{black!50}
		\transformationArrow{1.25}{0}{0.75}{1.5}
		\drawDomino{3}{0}{1}{A}{{ccol,tcol,tcol,mcol,tcol,tcol,mcol,ccol,ccol,tcol,tcol}}{black}{black!50}
	\end{tikzpicture}	\caption{An example of a 3-consistent forest for the template graph of the edge that is na\"ively extendable to the template graph of the domino.
	\label{fig:3-consistent-forests-and-naive-extensions}}
\end{figure}
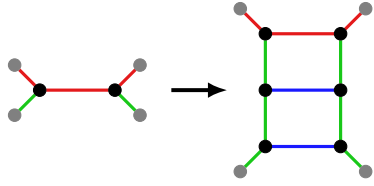

We show that a naively extendable forest $T_X$ allows us to extend a 3-decomposition that locally behaves like $T_X$.
\begin{lemma}
	\label{naive-extensions-decompositions}
	If $G$ has a 3-decomposition $(T,C,M)$ such that the restriction~$T_X$ of $T$ to $X$ is na\"ively extendable to $Y$ with forest $T_Y$, then the extension $H=G[X\to Y]$ has a 3-decomposition.
\end{lemma}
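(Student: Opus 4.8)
The plan is to take the 3-decomposition $(T,C,M)$ of $G$ and surgically replace the part living on $X$ with the part living on $Y$, keeping everything outside $c(X)$ untouched. Concretely, I would set $T' = (T - E(T_X)) \cup E(T_Y)$, then $C' = (C - E(C_X)) \cup E(C_Y)$, and $M' = (M - E(M_X)) \cup E(M_Y)$, where $C_Y$ and $M_Y$ are the cycle-part and matching-part of $Y - E(T_Y)$ given by the 3-consistency of $T_Y$. Since the transformation only changes edges incident to the core, every edge of $H$ is accounted for exactly once, so $(T',C',M')$ is genuinely a decomposition of $H$; the work is showing each piece has the right structure.

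\emph{First}, I would verify that $C'$ is $2$-regular and $M'$ is a matching. The key observation is that na\"ive extendability forces $T_X$ and $T_Y$ to realise the same assignment $f\colon O(X)\to\{\texttt{t},\texttt{c},\texttt{m}\}$. This means that at each outer vertex $v$, the edge crossing between the core and the rest of $G$ has the same type before and after the swap. Hence, for a vertex $w$ of $G$ outside the core, its incident edges keep their colours, so its degrees in $C'$ and $M'$ are unchanged from $C$ and $M$. For the new inner vertices (those in $c(Y)$), 3-consistency of $T_Y$ guarantees that the components of $Y - E(T_Y)$ are isolated vertices, single edges, paths between outer vertices, or cycles — so each such vertex has $C'$-degree $0$ or $2$ and $M'$-degree $0$ or $1$, with the boundary contributions matching $f$. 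Putting these together, $C'$ has every vertex of degree $0$ or $2$ (hence is $2$-regular after discarding isolated vertices) and $M'$ is a matching.

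\emph{The main obstacle}, and where I would spend the most care, is proving that $T'$ is a spanning tree of $H$ rather than a forest with a cycle or an extra component. Here the second condition of na\"ive extendability is essential: two outer vertices lie in the same component of $T_X$ if and only if they lie in the same component of $T_Y$. The picture is that $T$ restricted to the outside-of-core part of $G$ breaks into several subtrees, which are glued together precisely through the outer vertices of $X$ according to the partition that $T_X$ induces on $O(X)$. Since $T_Y$ induces the \emph{same} partition on the outer vertices, replacing $T_X$ by $T_Y$ reconnects these external subtrees in exactly the same pattern. I would argue acyclicity and connectedness by counting or by a direct component-merging argument: $T'$ is spanning because every vertex of $H$ is covered (external vertices by the old tree, core vertices by $T_Y$ whose every component meets an outer vertex); it is connected and acyclic because the number of components and the presence of cycles depend only on how the external tree-fragments are identified at the outer vertices, which is invariant under the swap. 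A clean way to finish is to note $T'$ spans $|V(H)|$ vertices and then show it has exactly $|V(H)|-1$ edges and is connected, or equivalently that it is connected and acyclic; either one, combined with the preserved outer-vertex partition, gives that $T'$ is a spanning tree, completing the proof that $(T',C',M')$ is a 3-decomposition of $H$.
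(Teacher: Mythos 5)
Your proposal is correct and follows essentially the same route as the paper: you define the identical decomposition $(T',C',M') = (T-E(T_X)\cup E(T_Y),\ C-E(C_X)\cup E(C_Y),\ M\setminus M_X\cup M_Y)$, use the shared assignment to control degrees in $C'$ and $M'$ at the boundary, and use the preserved partition of outer vertices (plus the fact that every component of the 3-consistent forest $T_Y$ meets an outer vertex) to show $T'$ is a spanning tree. The paper merely fleshes out your "component-merging" step into explicit path-replacement arguments for connectedness and a minimal-cycle argument for acyclicity.
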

\begin{proof}
	We verify that the decomposition $(T',C',M')$ of $H$ which we now define is, indeed, a 3-decomposition of $H$.
	The graph $T'$ contains all vertices of $H$ and the edges in $E(T)\setminus E(T_X)\cup E(T_Y)$.
	Similarly, $E(C')$ is the set $E(C)\setminus E(C_X)\cup E(C_Y)$ (and $C'$ contains all vertices of $H$ that are incident to one of these edges).
	Finally, $M'\coloneqq M\setminus M_X\cup M_Y$.
	
	We first prove that $T'$ is a spanning tree.
	To show that $T'$ is connected, fix a vertex $v\in H-V(c(Y)) = G-V(c(X))$.
	There exists a path $P$ in $T$ from this vertex to every other vertex in $G-V(c(X))$.
	If $P$ does not use vertices of $c(X)$, then it is present in $T'$.
	Otherwise $P$ contains a subpath $P'$ which links two outer vertices of $X$.
	This places them in the same component of $T_X$ and they are connected in $T_Y$.
	Thus we can replace every such subpath in $T_X$ by one in $T_Y$ to obtain a path in $T'$.
	Any vertex $w\in c(Y)$ is in the same component of $T_Y$ as an outer vertex, which is connected to $v$, making $T'$ connected.
	
	Suppose now that $T'$ contains a cycle $K$.
	It cannot be contained in $H-V(c(Y))$ and, hence, $K$ consists of paths using edges in $E(T)\setminus E(T_X)$ and paths using edges in $E(T_Y)$.
	By choosing a cycle that consists of a minimal amount of such segments, we can ensure that it enters every component of $T_Y$ at most once.
	Any such path can be replaced by one in $T_X$, giving us a cycle in $T$ which is a contradiction.
	
	The set $M'$ is a matching since the edges that are in $M\setminus M_X$ and those in $M_Y$ are independent.
	So if $e,e'\in M'$ share an end, we may assume that $e\in M\setminus M_X$ and $e'\in M_Y$.
	Therefore, their common end is a vertex in $O(X)$.
	In this case $e'\notin M_X$, giving us $e'\notin M_Y$ since $f(e')\neq \texttt{m}$, where $f$ is the assignment realised by $T_X$, which is a contradiction.
	
	Finally, for $v\in H$, we take a look at the number of edges in $C'$ incident to $v$.
	If $v\in c(Y)$ or $v$ is in $H$ but not adjacent to $c(Y)$, then this degree is either~2 or~0.
	This just leaves the vertices in $H$ adjacent to $c(Y)$ or, equivalently, those of $G$ adjacent to the $c(X)$.
	Since $T_X$ and $T_Y$ realise the same assignment, any edge from a vertex of $G-c(X)$ to $c(X)$ is in $C_X$ if and only if the corresponding edge from $H-c(Y)$ to $c(Y)$ is in $C_Y$.
	Consequently, the degree of these vertices remains the same in $C$ and $C'$, yielding a degree of~0 or~2 here as well. 
\end{proof}
The proof of \Cref{naive-extensions-decompositions} is constructive in the sense that, given a 3-de\-com\-po\-si\-tion for $G$ as in the statement, we can construct a 3-decomposition for~$H$.
We can now automate the proofs we relegated to the appendix in the previous section.
\begin{remark}
	\label{3-compatible-proofs}
	To prove that an $(X,Y)$-extension is 3-compatible, we show that any 3-consistent forest of $T_X$ of $X$ satisfies
	\begin{enumerate}[(i)]
		\item $T_X$ is na\"ively extendable to $Y$ or
		\item any $(X,Y)$-extension $H=G[X\to Y]$ of a graph $G$ has a 3-decomposition if $G$ has a 3-decomposition $(T,C,M)$ for which the restriction of $T$ to $X$ is~$T_X$.
		\label{itm:non-naive-manual-check}
	\end{enumerate}
	This suffices as any 3-decomposition $(T,C,M)$ of a graph $G$ containing an induced $c(X)$ has a 3-consistent restriction $T_X$ of $T$ to $X$ by \Cref{restriction-consistent}.
	If $T_X$ is na\"ively extendable, then $H$ has a 3-decomposition by \Cref{naive-extensions-decompositions} and otherwise such a decomposition exists by \eqref{itm:non-naive-manual-check}.
	
	The task of determining all 3-consistent forests and checking whether they are na\"ively extendable can be accomplished algorithmically.
	We have implemented this
	\footnote{\label{gitlab}
		GitLab repository containing 3-decompositions for connected cubic graphs of order at most~20 and code to check na\"ive extensions,
		\url{https://gitlab.rlp.net/obachtle/reductions-for-the-3-decomposition-conjecture},
		February 2022.
	}
	and the local behaviours covered in \Cref{sec: reducible configurations} are exactly the ones that are not na\"ively extendable, that is, where \eqref{itm:non-naive-manual-check} applies (disregarding symmetric cases).
\end{remark}

\paragraph{The complexity of na\"ive extensions.}
We now prove that the existence of a na\"ive extension is \NP-complete.
We actually restrict ourselves to the subproblem of finding a 3-consistent forest $T_Y$ that realises some assignment $f$.
The reduction is based on the one for the integral multicommodity flow problem \cite{EIS76}.
\begin{theorem}
	\label{3-consistent-realisation-np-complete}
	Given a template graph $Y$ and an assignment $f$, checking whether a 3-consistent forest $T_Y$ for $Y$ exists that realises $f$ is \NP-complete.
\end{theorem}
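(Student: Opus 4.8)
The plan is to establish the two directions of \NP-completeness separately. Membership in \NP{} is immediate: a candidate spanning forest $T_Y$ is a certificate of size linear in $|E(Y)|$, and all defining conditions can be checked in polynomial time. Concretely, one verifies that $T_Y$ is acyclic and spans $V(Y)$, that every component of $T_Y$ meets $O(Y)$, that each component of $Y-E(T_Y)$ is an isolated vertex, a single edge, a cycle, or a path whose two ends lie in $O(Y)$, and finally that the type ($\texttt{t}$, $\texttt{c}$, or $\texttt{m}$) of the edge incident to each outer vertex agrees with $f$. All of these are decidable by elementary graph traversals, so the problem lies in \NP.

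For hardness, I would reduce from \threesat, following the gadget philosophy that Even, Itai, and Shamir use for integral multicommodity flow~\cite{EIS76}. The central observation is that the path-components of $Y-E(T_Y)$ each join two outer vertices, are pairwise vertex-disjoint (being distinct connected components), and, by the realisation condition, have as their ends precisely the $\texttt{c}$-vertices of $f$. Thus the $\texttt{c}$-vertices behave exactly like the terminals of a vertex-disjoint routing problem: any solution must pair them up and connect each pair by a disjoint path through the graph. I would encode the demands of the routing instance as $\texttt{c}$-outer-vertices, the routable connections as edges that such a path may traverse, and the forced edges (``walls'') as edges that the forest or the matching must occupy.

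Concretely, I would build $Y$ from a formula $\phi$ as follows. Each variable is represented by a gadget offering a $\texttt{c}$-path two internally disjoint routes, corresponding to the literal being set true or false; each clause is represented by a gadget that admits a consistent completion precisely when at least one of its literals is routed in the satisfying direction. These gadgets are wired together by channels of inner vertices, and every inner vertex is forced to have degree exactly~$3$ by attaching $\texttt{t}$- and $\texttt{m}$-type outer vertices as needed, with $f$ recording the type of each pendant edge. The $\texttt{t}$-pendants are placed so that, for any admissible choice of the $C_Y$- and $M_Y$-components, the remaining edges automatically form an acyclic spanning subgraph in which every component contains an outer vertex, so the condition on $T_Y$ is met for free. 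One then argues that $\phi$ is satisfiable if and only if $Y$ admits a $3$-consistent forest realising $f$: a satisfying assignment dictates the variable-gadget routes and hence a valid family of disjoint $\texttt{c}$-paths (with $M_Y$ absorbing the leftover $\texttt{m}$-pendants), and conversely the rigidity of the gadgets forces any realising forest to induce a consistent truth assignment satisfying every clause.

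The main obstacle, and the place where the multicommodity idea is indispensable, is that the realisation $f$ records only three coarse edge types and never labels which $\texttt{c}$-vertex must be joined to which. Since a \emph{free} pairing of terminals into disjoint paths is polynomially solvable (maximum disjoint $T$-path packing), the gadgets must \emph{geometrically} force the intended pairing, and in effect a consistent direction of each route, using only the immovable $\texttt{t}$- and $\texttt{m}$-edges as walls. Verifying that these walls genuinely block every unintended routing while still permitting the intended one, simultaneously with the global requirements that $T_Y$ be acyclic, spanning, and anchored at outer vertices and that $M_Y$ consist of isolated edges, is the delicate part and will consume most of the proof.
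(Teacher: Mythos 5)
Your \NP-membership argument is fine and matches the paper's. The hardness half, however, has a genuine gap: the entire weight of the reduction rests on a clause gadget that ``admits a consistent completion precisely when at least one of its literals is routed in the satisfying direction,'' and you never construct it. Worse, the mechanism you propose for it --- routing vertex-disjoint paths between $\texttt{c}$-labelled outer vertices, with $\texttt{t}$- and $\texttt{m}$-edges as walls --- runs head-on into the obstacle you yourself identify: $f$ does not prescribe which $\texttt{c}$-vertices are paired, and the free pairing of terminals by disjoint paths is polynomially solvable, so it is not at all clear that disjunction over three literals can be encoded by path routing alone. You flag this as ``the delicate part'' and defer it, but that part \emph{is} the proof.

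The paper's reduction avoids this issue entirely, and the way it does so is the idea your proposal is missing. It uses only \emph{two} $\texttt{c}$-vertices in the whole construction (pendant at $s_1$ and at $t_n$), so $C_Y$ is forced to be a single path and there is no pairing ambiguity; this one path threads through the concatenated variable gadgets, choosing the upper or lower branch in each, which encodes the truth assignment. Clause satisfaction is then enforced not by routing at all but by the \emph{other} defining condition of 3-consistency: every component of $T_Y$ must contain an outer vertex. A clause vertex $c_j$ is attached by forced tree-edges to the three blocks corresponding to its literals, and if all three literals are falsified, the tree component containing $c_j$ is trapped among inner vertices and violates that condition. Your sketch explicitly arranges the $\texttt{t}$-pendants so that the outer-vertex condition on tree components ``is met for free,'' which is precisely backwards: that condition is the engine of the hardness proof, not a side constraint to be neutralised. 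As written, the proposal does not yield a correct reduction.
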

\begin{proof}
	The problem is in \NP{} since a spanning forest $T_Y$ of $Y$ can be checked to be 3-consistent and realise $f$ in polynomial time.
	To prove \NP-hardness we use a reduction from \threesat.
	Let $F$ be a formula in \textsf{3CNF} with variables $x_1,\ldots, x_n$ and clauses $C_1,\ldots, C_m$.
	We assume that for every variable $x$ both $x$ and $\bar{x}$ occur equally often in the clauses.
	This is without loss of generality since we can add additional clauses of the form $x\lor x\lor \bar{x}$ or $x\lor \bar{x}\lor \bar{x}$ as required to reach the desired state.
	
	We first note that we can force edges $uv$ to end up in the tree part $T_Y$ by subdividing them and adding a leaf to the subdivision vertex whose incident edge is in the matching.
	Formally, we remove $uv$ and add edges $uw$, $vw$, $wy$, where $w,y$ are new vertices and $y$ is an outer vertex.
	By setting $f(y)=\texttt{m}$, $wy$ needs to be in $M_Y$ and both $uw$ and $vw$ are necessarily in $T_Y$.
	For simplicity, we refer to such a gadget as a \emph{forced tree-edge} and colour them in green in our illustrations (without depicting the additional vertices they contain).
	
	With this gadget at hand, we can describe the clause and variable gadgets we construct.
	Clause gadgets are very simple and shown in \Cref{fig:clause-gadget}.
	They just consist of a vertex $c_j$ incident to three forced tree-edges, whose ends are in the variable gadgets (we describe where momentarily).
	Each such edge corresponds to one of $C_j$'s literals.
	
	The variable gadget for $x_i$ consists of two paths of length $4\ell_i + 1$ which coincide in their ends $s_i$ and $t_i$ and are disjoint otherwise.
	Here $\ell_i$ is the amount of occurrences of $x_i$ (or $\bar{x}_i$) in $F$.
	We call one of these paths the \emph{upper path} and the other the \emph{lower} one.
	In this gadget, we partition the $4\ell_i$ distinct vertices on the upper and lower paths into $\ell_i$ blocks of four consecutive vertices and each block on the upper path corresponds to an occurrence of $x_i$ while each block on the lower path corresponds to an $\bar{x_i}$ in some clause.
	Furthermore, the $\ell$th vertices of the upper and lower path are connected by a forced tree-edge for all even $\ell$.
	This is visualised in \Cref{fig:var-gadget}.
	
	The complete graph now concatenates all the variable gadgets by adding edges from $s_i$ to $t_{i+1}$ for $i\in\{1.\ldots,n-1\}$.
	It adds a leaf to $s_1$ and $t_n$, where the edges are required to be part of $C_Y$ (by setting the $f$-value of the outer vertices to~\texttt{c}).
	A forced tree-edge in a clause gadget corresponding to a literal~$L$ is connected to the third vertex in the block that corresponds to~$L$.
	Finally, all the first vertices in a block are connected by a forced tree-edge to a path consisting of only forced tree-edges that ends in two more leaves.
	These two leaves are assigned an $f$-value of~\texttt{t}.
	The construction of $Y$ runs in polynomial time, so we just need to prove that $F$ is satisfiable if and only if there exists a 3-consistent forest $T_Y$ realising $f$.
	
	\begin{figure}
		\begin{subfigure}{.7\textwidth}
			\centering
			\begin{tikzpicture}
				\node[node,label=left:{$s_i$}] (s) at (0,0) {};
				\node[node,label=right:{$t_i$}] (t) at (7,0) {};
				
				\node[node] (u1) at (1,1) {};
				\node[node] (u2) at (2,1) {};
				\node[node] (u3) at (3,1) {};
				\node[node] (u4) at (4,1) {};
				\node[node] (u5) at (6,1) {};
				
				\node[node] (l1) at (1,-1) {};
				\node[node] (l2) at (2,-1) {};
				\node[node] (l3) at (3,-1) {};
				\node[node] (l4) at (4,-1) {};
				\node[node] (l5) at (6,-1) {};
				
				\draw[edge] (s) to (u1) to (u2) to (u3) to (u4);
				\draw[dotted] (u4) to (u5);
				\draw[edge] (u5) to (t);
				\draw[edge] (s) to (l1) to (l2) to (l3) to (l4);
				\draw[dotted] (l4) to (l5);
				\draw[edge] (l5) to (t);
				
				\draw[edge,tcol] (u2) to (l2);
				\draw[edge,tcol] (u4) to (l4);
				\draw[edge,tcol] (u5) to (l5);
				
				\draw [decorate,decoration={brace,amplitude=10pt}] (0.9,1.1) to node [black,midway,yshift=0.6cm] {an occurrence of $x_i$} (4.1,1.1);
				\draw [decorate,decoration={brace,amplitude=10pt, mirror}] (0.9,-1.1) to node [black,midway,yshift=-0.6cm] {an occurrence of $\bar{x}_i$} (4.1,-1.1);
			\end{tikzpicture}			\caption{A variable gadget.
			\label{fig:var-gadget}}
		\end{subfigure}
		\begin{subfigure}{.25\textwidth}
			\centering
			\begin{tikzpicture}
				\node[node,label=below:{$c_j$}] (c) at (0,0) {};
				
				\node[node] (u1) at (-1,1) {};
				\node[node] (u2) at (0,1) {};
				\node[node] (u3) at (1,1) {};
				
				\draw[edge,tcol] (c) to (u1);
				\draw[edge,tcol] (c) to (u2);
				\draw[edge,tcol] (c) to (u3);
				
				\node (tbuffer) at (0,2.315) {};
				\node (bbuffer) at (0,-1.315) {};
			\end{tikzpicture}			\caption{A clause gadget.
			\label{fig:clause-gadget}}
		\end{subfigure}
		\caption{An illustration of the gadgets used in the proof of \Cref{naive-extensions-np-complete}.
		\label{fig:naive-extensions-np-complete}}
	\end{figure}
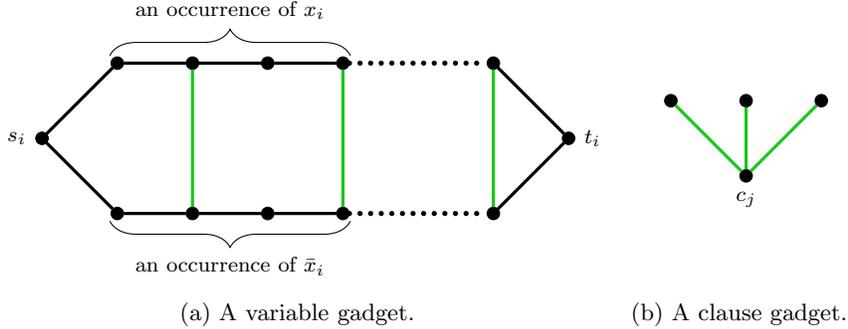
	
	First assume that a 3-consistent $T_Y$ exists that realises $f$.
	The cycle component $C_Y$ contains a path $P$ from $s_1$ to $t_n$ in the graph $Y-E(T_Y)$.
	Since $T_Y$ contains all forced tree-edges, $P$ either uses the upper or lower path in each variable gadget.
	If it uses the lower path in the gadget for $x_i$, then we set $x_i$ to \texttt{true} and otherwise we set it to \texttt{false}.
	Suppose this assignment would not satisfy~$F$.
	This yields a clause $C_j$ for which all three of its literals are not satisfied, meaning that we use the upper path for the positive literals and the lower path for the negated ones.
	Consequently, the vertex $c_j$ and its neighbours in the variable gadgets form a component of $T_Y$ without an outer vertex, contradicting the fact that $T_Y$ is 3-consistent.
	
	Conversely, assume that we are given a satisfying assignment for $F$.
	Let $T_Y$ be the forest containing the following edges:
	$T_Y$ contains all forced tree-edges.
	Additionally, if $x_i$ is set to \texttt{true}, $T_Y$ uses the upper path and otherwise it uses the lower one.
	Here uses a path means that $T_Y$ contains the path's edges incident to $s_i$ and $t_i$ and in each block it contains the first and third edge.
	
	This forest has a component for each clause gadget $C_j$ which contains the third vertices of the blocks corresponding to its literals.
	Disregarding the components consisting of a single outer vertex, there is one other component containing the path of forced tree-edges and the first (and second) vertex of all blocks.
	The cycle component $C_Y$ consists of a path which links the outer vertices at $s_1$ and $t_n$ and all other components are single edges, placing them in $M_Y$.
	As a result, $T_Y$ realises $f$ and we can make it 3-consistent by choosing a satisfied literal in every clause $C_j$ and connecting the component of $C_j$ to the one containing the path by adding the edge from the second to the third vertex in the corresponding block to $T_Y$.
\end{proof}
Using the same notation as in the result, we note that the reduction presented in \Cref{3-consistent-realisation-np-complete} can also be used to prove \Cref{naive-extensions-np-complete}.
To get this result, one simply needs to define a graph $X$ and a 3-consistent forest $T_X$ that also realises~$f$ and in which the only two outer vertices with an incident edge in~$T_X$ are in the same component of $T_X$.


\section{Applications and minimum counterexamples}
\label{sec: applications and small graphs}
In this section, we prove properties of minimum counterexamples to the 3-decomposition conjecture (under the assumption that such a counterexample exists).
To do so we exploit our new reducible configurations.

We prove that all 3-connected cubic graphs of tree-width at most~3 or of path-width at most~4 have a 3-decomposition.
Therefore minimum 3-connected counterexamples have tree-width at least~4 and path-width at least~5.
Both proofs make use of the reducible configurations we determined in \Cref{sec: reducible configurations} by showing that the restriction of the tree- or path-width causes one of them to appear.

\begin{lemma}
	\label{lem: unavoidable set for tw 3}
	Every cubic graph of tree-width at most~3 contains at least one isomorphic copy of the following graphs as a subgraph: the triangle, the $K_{2,3}$, and the domino.
\end{lemma}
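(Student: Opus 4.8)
The plan is to prove the contrapositive in spirit: I want to show that a cubic graph of tree-width at most~3 cannot avoid all three of the triangle, the $K_{2,3}$, and the domino. The key tool is a minimal-width tree-decomposition and the structure it forces on a graph. Recall that a graph of tree-width at most~3 has a tree-decomposition in which every bag has size at most~4. I would first reduce to a \emph{smooth} tree-decomposition $(T,\calV)$ of width exactly~3 in which adjacent bags differ by exactly one vertex and all bags have size~4; this is standard and lets me argue locally. The overall strategy is to look at a leaf bag (or a ``peripheral'' part of the decomposition) and analyse the few vertices that live only there, using that the graph is cubic to pin down the induced structure.

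\textbf{The core argument.} Pick a leaf $i$ of $T$ with neighbour $j$, and set $W = V_i \setminus V_j$, the vertices forgotten at the leaf. Each vertex of $W$ has all three of its neighbours inside $V_i$ (by the subtree property of tree-decompositions, a vertex in $W$ cannot have a neighbour outside $V_i$). Since $|V_i| = 4$, the induced subgraph $G[V_i]$ has at most four vertices, and the vertices of $W$ have degree~3 inside this at-most-four-vertex set. I would now do a short case analysis on $|W|$:
\begin{itemize}
	\item If $W$ contains a vertex $v$ adjacent to all other three vertices of $V_i$, then together with any edge among those three we quickly find a triangle, and if those three are pairwise nonadjacent we get a $K_{1,3}$ whose structure, combined with cubicity and a neighbouring bag, forces a $K_{2,3}$.
	\item More carefully, because every vertex of $W$ sees exactly three vertices inside the $4$-set $V_i$, one examines the bipartite-type adjacency between $W$ and $V_i \setminus W$ and between the elements of $W$ themselves.
\end{itemize}
The point is that a cubic graph cannot have two ``private'' vertices forced into a common $4$-bag without creating a short induced structure; the bound $|V_i|=4$ is exactly tight enough that the only ways to realise three neighbours per forgotten vertex are to produce a triangle, a $K_{2,3}$, or a domino (the domino arising when two forgotten vertices share their neighbourhood pattern across two consecutive tight bags).

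\textbf{Handling the domino and the global step.} The triangle and $K_{2,3}$ should fall out of a single bag, but the domino spans six vertices and so will not fit in one size-$4$ bag; for it I expect to need two or three consecutive bags along a path in $T$. Here I would track how vertices are introduced and forgotten along a path of the decomposition and show that, if no triangle and no $K_{2,3}$ ever appears, the cubic degree constraints propagate a rigid ladder-like adjacency pattern that, over a short stretch of bags, realises the domino. Concretely, assuming girth obstructions to the first two configurations, each newly forgotten vertex is forced to attach in a way that, within two overlapping $4$-bags, yields the six-vertex domino subgraph.

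\textbf{Main obstacle.} The delicate part is the bookkeeping for the domino: ruling out the triangle and $K_{2,3}$ does not immediately confine things to a single bag, so I must argue about the interface vertices shared between consecutive bags and show cubicity leaves no escape. The cleanest route is probably to assume for contradiction that $G$ (cubic, tree-width $\le 3$) contains none of the three configurations, fix a minimal-width tree-decomposition, and derive a contradiction by following a forgotten vertex and its neighbours across the bag boundary---showing every attachment either closes a triangle/$K_{2,3}$ or completes a domino. I expect the enumeration of how three edges can leave a $4$-bag, modulo the excluded configurations, to be the technically fiddly step, but it is a finite check rather than a deep idea.
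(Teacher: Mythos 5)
Your high-level plan coincides with the paper's: take a smooth width-3 tree-decomposition, observe that a vertex forgotten at a leaf bag has all three of its neighbours among the other three vertices of that bag (so its neighbourhood is completely determined by cubicity), and then case-analyse how the adjacent bags can look, with the triangle and $K_{2,3}$ falling out of one or two bags and the domino out of a slightly larger neighbourhood in the decomposition tree. However, the proposal stops exactly where the proof begins: the entire content of the lemma \emph{is} the finite case analysis that you defer as ``technically fiddly'', and you never verify that the enumeration actually closes — in particular you give no argument for why every attachment pattern that avoids a triangle and a $K_{2,3}$ must complete a domino rather than, say, some other configuration. Asserting that a $K_{1,3}$ in a bag ``combined with cubicity and a neighbouring bag forces a $K_{2,3}$'' is not correct as stated; in the paper's analysis that situation can instead lead to a triangle or a domino depending on which vertex the neighbouring bags drop.

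Two concrete points are missing. First, the organising idea that makes the case analysis terminate: the paper roots the decomposition tree and chooses a leaf $\ell$ at \emph{maximum distance} from the root, which guarantees that every sibling of $\ell$ is also a leaf; this is exactly what lets one conclude that a second forgotten vertex in a sibling bag also has its full neighbourhood inside a 4-set, and it is the mechanism by which the domino appears (two sibling leaves dropping distinct vertices whose forced neighbourhoods overlap in two common vertices). Without some device of this kind, ``following a forgotten vertex across the bag boundary'' does not obviously terminate in a bounded check. Second, in a smooth decomposition adjacent bags differ in exactly one vertex, so your set $W = V_i\setminus V_j$ is a singleton; the case analysis on $|W|$ and the talk of ``two private vertices in a common 4-bag'' does not match the setup you chose, and the $K_{2,3}$ actually arises from two \emph{different} bags (a leaf and its sibling, or a leaf and its grandparent) dropping vertices with identical forced neighbourhoods, not from one bag alone.
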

\begin{proof}
	First note that cubic graphs have tree-width at least~3: smaller tree-width necessitates a vertex of degree at most~2.
	Let $G$ be a cubic graph of tree-width~3.
	Consider a \emph{smooth tree-decomposition} $(T, \mathcal{V})$ of~$G$ of width~3, that is, all bags contain exactly four vertices of~$G$ and adjacent bags have three vertices in common.
	(See~\cite{Bodlaender1998} for a proof that smooth tree-decompositions exist.)
	We may assume, by \Cref{small-graphs-3-dec}, that $G$ has more than six vertices and, therefore, $T$ has more than three.
	
	Let $r$ be a vertex of $T$ and consider $T$ as an $r$-rooted tree.
	Moreover, let $\ell$ be a leaf of $T$ of maximum distance to $r$ and $V_\ell=\{u,x_1,x_2,x_3\}$.
	By assumption, $\ell$ has a parent $f$ with $V_f=\{v,x_1,x_2,x_3\}$.
	We conclude that $N_G(u) = \{x_1,x_2,x_3\}$.
	
	If $\ell$ does not have a sibling, then regard its parent $g$.
	If $V_g = \{w,x_1,x_2,x_3\}$, then $N_G(v) = N_G(u)$ and $G$ contains a $K_{2,3}$ consisting of $u$, $v$, and their neighbours.
	Otherwise, without loss of generality, $V_g=\{v, w, x_2, x_3\}$ and $N_G(x_1)$ contains at least one vertex in $\{x_2, x_3\}$, yielding a triangle consisting of $u$, $x_1$, and this vertex.
	
	Consequently, we may assume that $\ell$ has sibling $\ell'$.
	If $V_{\ell'} = \{w,x_1,x_2,x_3\}$, we obtains a $K_{2,3}$ again.
	We may thus assume that the bags of all children of $f$ differ from $V_f$ in distinct vertices.
	In particular, we have $V_{\ell'} = \{v,w,x_2,x_3\}$.
	
	If $\ell$ has a further sibling $\ell''$ with $V_{\ell''} = \{v, y, x_1, x_3\}$, then we obtain a domino in $G[\{u, v, w, y, x_2, x_3\}]$.
	Otherwise, $f$ has a parent $g$ and $V_g$ differs from $V_f$ in exactly one vertex.
	If this is the vertex $x_1$, then $x_1$ has a neighbour in $\{x_2, x_3\}$, yielding a triangle together with $u$ and $x_1$.
	Similarly, we obtain a triangle when $v$ is not in $V_g$.
	Should $x_2$ be the vertex not in $V_g$, then $x_2$ has a neighbour in $\{x_1, x_3, v\}$ and each choice yields a triangle together with $x_2$ and one of $u$ or~$w$.
\end{proof}

The result for path-width is conceptually similar and the proof technique is abstracted and turned into an algorithm in~\cite{BH20}, which checks for unavoidable structures in graphs of bounded path-width.
Nevertheless, we give a proof here to keep the paper self-contained.
\begin{lemma} \label{lem: unavoidable set for pw 4}
	Every cubic graph of path-width at most~4 contains at least one isomorphic copy of the following graphs as a subgraph: the triangle, the $K_{2,3}$, the domino, the twin-house, or the claw-square.
\end{lemma}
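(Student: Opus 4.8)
The plan is to mirror the strategy of \Cref{lem: unavoidable set for tw 3}, replacing the smooth tree-decomposition by a smooth path-decomposition and exploiting the fact that the active vertex set slides along the path one step at a time. First I would reduce to the case $\pw(G)=4$. Since $\tw(G)\le \pw(G)$ and cubic graphs have tree-width at least~3, we have $\pw(G)\ge 3$; and if $\pw(G)\le 3$, then $\tw(G)\le 3$, so \Cref{lem: unavoidable set for tw 3} already yields a triangle, a $K_{2,3}$, or a domino. Hence I may assume $\pw(G)=4$ and, as in the tree-width proof, that $G$ has more than six vertices by \Cref{small-graphs-3-dec}. I would then fix a smooth path-decomposition $V_1,\dots,V_N$ of width~4 (these exist analogously to smooth tree-decompositions, cf.~\cite{Bodlaender1998}), so that every bag has exactly five vertices and consecutive bags share exactly four; equivalently, passing from $V_i$ to $V_{i+1}$ forgets one vertex and introduces one. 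The key structural fact is that a vertex $u$ forgotten between $V_i$ and $V_{i+1}$ has all three of its neighbours in bags $V_j$ with $j\le i$; for the very first forgotten vertex this means $N_G(u)\subseteq V_1$.

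Next I would examine the left end of the decomposition. Let $u$ be the first vertex that is forgotten; then $N_G(u)$ is contained in $V_1=\{u,x_1,x_2,x_3,x_4\}$, so after relabelling $N_G(u)=\{x_1,x_2,x_3\}$. If any two of $x_1,x_2,x_3$ are adjacent I obtain a triangle and am done, so I may assume $G$ is triangle-free; similarly, whenever two vertices of the current window turn out to share all three neighbours I extract a $K_{2,3}$. The heart of the argument is then a case analysis on the next few forget/introduce steps: because the window has only five vertices and $G$ is cubic, the neighbours of each freshly forgotten vertex are confined to a small, explicitly known set of at most five candidates, and forbidding triangles and $K_{2,3}$'s forces the emerging adjacencies to be spread out. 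Tracking these forced edges across a bounded number of consecutive bags --- six vertices for the domino and the twin-house, eight for the claw-square --- produces one of the five configurations.

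The main obstacle will be the organisation and exhaustiveness of this case analysis rather than any single hard step. Each individual branch is a routine local deduction (two adjacent $x_i$ give a triangle, a repeated neighbourhood gives a $K_{2,3}$, and the remaining triangle- and $K_{2,3}$-free patterns are rigid enough to be matched against the domino, twin-house, or claw-square), but the number of branches is substantially larger than in the tree-width case: the window is one vertex wider, and two of the target configurations now enter the picture and require looking several forget steps ahead. This is precisely the kind of bookkeeping the authors note can be automated (cf.~\cite{BH20}); to keep the argument self-contained I would nonetheless carry out the branches by hand, pruning symmetric cases and using the triangle-free and $K_{2,3}$-free assumptions aggressively to collapse most of them quickly.
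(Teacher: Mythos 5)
Your proposal follows essentially the same route as the paper: reduce to $\pw(G)=4$ via \Cref{lem: unavoidable set for tw 3}, take a smooth path-decomposition with five-vertex bags, use the fact that every forgotten vertex has all its neighbours in the union of earlier bags, and then branch on the successive forget steps while pruning with triangle- and $K_{2,3}$-freeness until a domino, twin-house, or claw-square is forced. The only caveat is that the exhaustive case analysis on $w_1,\dots,w_5$ --- which is the entire substance of the paper's proof and occupies a full roadmap figure --- is announced rather than carried out, so as written this is a correct plan whose verification still rests on executing those branches.
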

\begin{proof}
	A graph of path-width at most~3 it is also of tree-width at most~3 and, hence, the lemma follows from~\Cref{lem: unavoidable set for tw 3}.
	
	Consequently, we assume $G$ has path-width 4 and consider a \emph{smooth path-decomposition} $(123\dots n', \mathcal{V})$ of~$G$, that is, all bags contain exactly five vertices of~$G$ and $|V_i \cap V_{i+1}| = 4$ for all $i \in \{1, \dots, n'-1\}$.
	(Again, see~\cite{Bodlaender1998} for a proof that smooth path-decompositions exist.)
	Observe that for each $i \in \{1, \dots, n'-1\}$ there exists unique vertices $v_i \in V_{i+1}\setminus V_{i}$ and $w_i \in V_i\setminus V_{i+1}$.
	In this proof, we make strong use of the following property, which is a direct consequence of the definition of path-decompositions:
	\begin{equation}\label{eq: neighbours of leaving vertices}
		\bigcup_{i=1}^j N_G(w_i) \subseteq \bigcup_{i=1}^j V_i~\text{for all}~j \in \{1, \dots, n'-1\}.
	\end{equation}
	Since the proof involves quite a few case distinctions, we refer to \Cref{fig:unavoidable set for pw 4} for an orientation of which edges are present in each case.
	Moreover, we use a lot of symmetry arguments and omit stating that these are without loss of generality every time.
	
	Let $V_1=\{v_1,u_1,u_2,u_3,u_4\}$ and $w_1=v_1$. 
	(This is a typical example of the symmetries we use: we can always rename the vertices such that this is true.)
	By \eqref{eq: neighbours of leaving vertices}, $N(v_1)\subseteq\{u_1,u_2,u_3,u_4\}$, say $N(v_1) = \{u_1,u_2,u_3\}$.
	We get two cases for $w_2$, namely $w_2=v_2$ or $w_2=u_1$.
	
	In the first case, $N(v_2) \neq N(v_1)$ gives us a $K_{2,3}$.
	Thus, $N(v_2) = \{u_2,u_3,u_4\}$.
	This time, we have three options for $w_3$: $u_1$, $u_2$, or $v_3$.
	The first and last case yield a desired subgraph:
	If $w_3=u_1$, then $N(u_1)$ contains a vertex in $\{u_2, u_3, u_4\}$.
	The first two options give us a triangle, the last a twin-house.
	For $w_3=v_3$ we get a $K_{2,3}$ or $N(v_3) = \{u_1,u_2,u_4\}$ and a domino is present.
	
	This puts us in the case that $w_3 = u_2$, which gets a new neighbour in $\{v_3, u_1, u_3, u_4\}$.
	All but $v_1$ yield a triangle, so we are done or $u_1v_3\in E(G)$.
	We now get four cases for $w_4$: $u_1$, $u_3$, $v_3$, or $v_4$.
	If $w_4=u_1$, $u_1$ has a neighbour in $\{v_3, u_3, u_4\}$, yielding a domino, a triangle, or a twin-house.
	If $w_4=v_3$, $v_3$ has a neighbour in $\{u_1, u_3, u_4\}$, yielding a domino, a twin-house, or a domino.
	If $w_4=v_4$, $v_4$ has three neighbours in $\{u_1, u_3, u_4, v_3\}$.
	From $u_3\in N(v_4)$ we get a domino if the edge $v_4u_1$ or $v_4u_4$ is present, one of which must be.
	Otherwise, $N(v_4)= \{u_1, u_4, v_3\}$ and the graph contains a claw-square.
	
	The last option is $w_4 = u_3$, in which we have $u_3v_4\in E(G)$ since $u_3u_4$ and $u_3u_1$ result in triangles and $u_3v_3$ yields a $K_{2,3}$.
	By regarding $w_5$, for which only two options $w_5=v_3$ or $w_5=v_5$ remain, we complete the branch of the proof.
	We get that $v_3v_4$, $v_3u_1$, and $v_3u_4$ give us a twin-house, and two dominoes, finishing the case $w_5=v_3$.
	For $w_5=v_5$, choosing any three of the possible neighbours (all choices are symmetric) creates a claw-square.
	
	Now we go back to our first split and deal with the case that $w_2=u_1$.
	We get a triangle or $N(u_1)=\{v_1, v_2, u_4\}$.
	This gives two choices for $w_3$ here, namely $v_3$ or $v_2$.
	The first case gives $N(v_3) = \{u_2, u_3, u_4\}$ and a twin-house is present.
	If $w_3 = v_2$, then we need two neighbours in $\{u_2, u_3, u_4, v_3\}$.
	We can exclude $u_4$ as if forms a triangle, and both $u_2$ and $u_3$ yields a $K_{2,3}$.
	Thus $N(v_2) = \{u_2, v_3\}$ and the obtained case is actually one we have already seen before, see \Cref{fig:unavoidable set for pw 4}.
\end{proof}

\begin{landscape}
	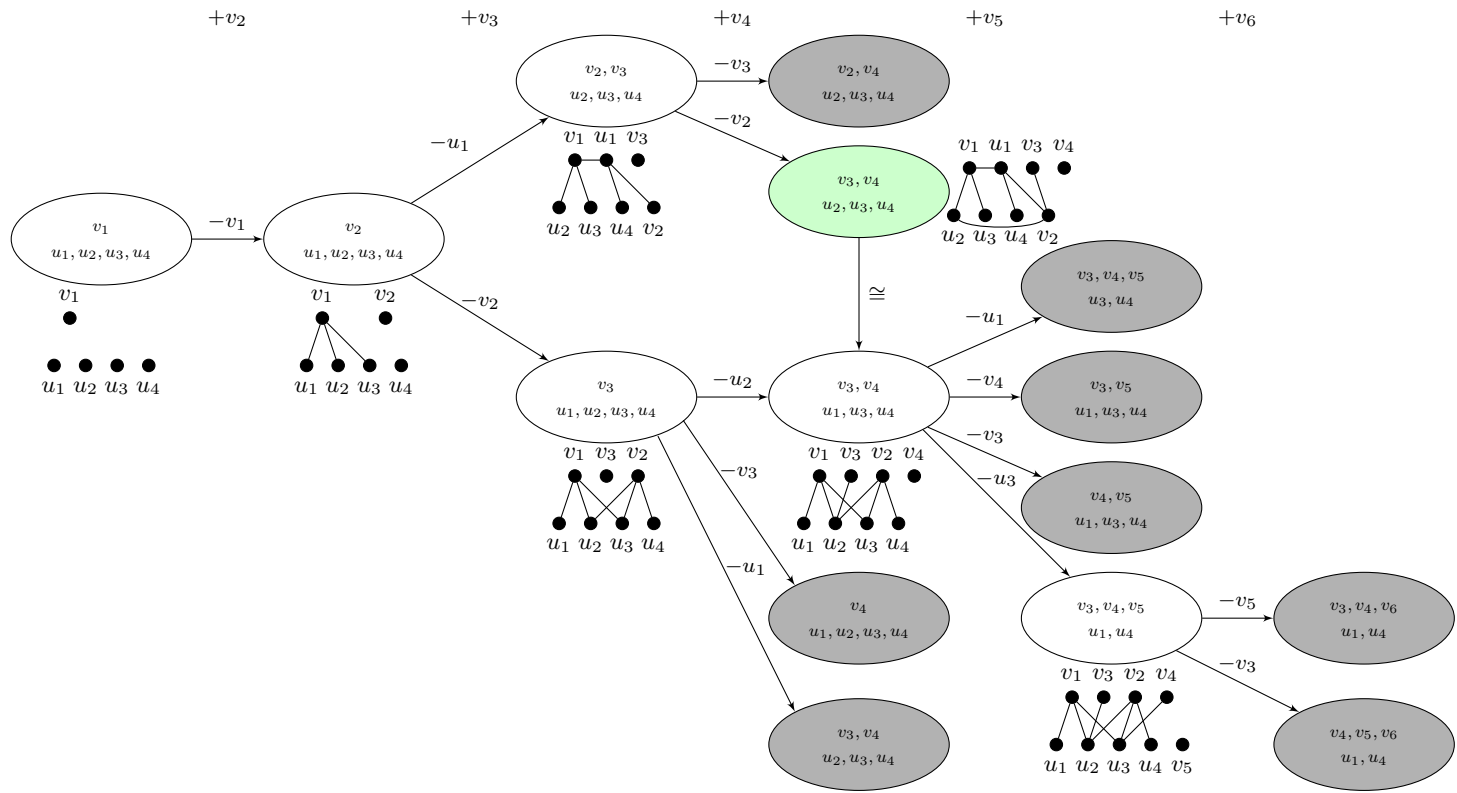
\begin{figure}
		\centering
		\begin{tikzpicture}[scale=0.84]
			\drawBag{(0,0)}{V1}{$v_1$}{$u_1,u_2,u_3,u_4$}{white};
			\drawBag{(4,0)}{V2}{$v_2$}{$u_1,u_2,u_3,u_4$}{white};
			\drawBag{(8,-2.5)}{V3a}{$v_3$}{$u_1,u_2,u_3,u_4$}{white};
			\drawBag{(8,2.5)}{V3b}{$v_2,v_3$}{$u_2,u_3,u_4$}{white};
			\drawBag{(12,-6)}{V4aa}{$v_4$}{$u_1,u_2,u_3,u_4$}{black!30};
			\drawBag{(12,-8)}{V4ab}{$v_3,v_4$}{$u_2,u_3,u_4$}{black!30};
			\drawBag{(12,-2.5)}{V4ac}{$v_3,v_4$}{$u_1,u_3,u_4$}{white};
			\drawBag{(12,0.75)}{V4ba}{$v_3,v_4$}{$u_2,u_3,u_4$}{green!20};
			\drawBag{(12,2.5)}{V4bb}{$v_2,v_4$}{$u_2,u_3,u_4$}{black!30};
			\drawBag{(16,-6)}{V5}{$v_3,v_4,v_5$}{$u_1,u_4$}{white};
			\drawBag{(16,-2.5)}{V5b}{$v_3,v_5$}{$u_1,u_3,u_4$}{black!30};
			\drawBag{(16,-4.25)}{V5c}{$v_4,v_5$}{$u_1,u_3,u_4$}{black!30};
			\drawBag{(16,-0.75)}{V5d}{$v_3,v_4,v_5$}{$u_3,u_4$}{black!30};
			\drawBag{(20,-6)}{V6a}{$v_3,v_4,v_6$}{$u_1,u_4$}{black!30};
			\drawBag{(20,-8)}{V6b}{$v_4,v_5,v_6$}{$u_1,u_4$}{black!30};
			
			\draw[->] (V1) to node[midway, above] {$-v_1$} (V2);
			\draw[->] (V2) to node[midway, above] {$-v_2$} (V3a);
			\draw[->] (V2) to node[midway, above left] {$-u_1$} (V3b);
			\draw[->] ($(V3a.south east)+(0.2,0.15)$) to node[midway, above=2mm] {$-v_3$} (V4aa.north west);
			\draw[->] ($(V3a.south east)-(0.2,0.1)$)to node[midway, above right=-2mm] {$-u_1$} (V4ab.north west);
			\draw[->] (V3a) to node[midway, above] {$-u_2$} (V4ac);
			\draw[->] (V3b) to node[midway, above] {$-v_2$} (V4ba);
			\draw[->] (V3b) to node[midway, above] {$-v_3$} (V4bb);
			\draw[->] (V4ba) to node[midway, right] {$\cong$} (V4ac);
			\draw[->] (V4ac.south east) to node[midway, above=1mm] {$-u_3$} (V5);
			\draw[->] (V4ac) to node[midway, above] {$-v_4$} (V5b);
			\draw[->] (V4ac) to node[midway, above] {$-v_3$} (V5c);
			\draw[->] (V4ac) to node[midway, above=1mm] {$-u_1$} (V5d);
			\draw[->] (V5) to node[midway, above] {$-v_5$} (V6a);
			\draw[->] (V5) to node[midway, above] {$-v_3$} (V6b);
			
			\node[] at (2,3.5) {$+v_2$};
			\node[] at (6,3.5) {$+v_3$};
			\node[] at (10,3.5) {$+v_4$};
			\node[] at (14,3.5) {$+v_5$};
			\node[] at (18,3.5) {$+v_6$};
			
			\node[circle,draw,fill=black,scale=0.5,label=above:{\small$v_1$}] (v2v) at (-0.5,-1.25) {};
			\node[circle,draw,fill=black,scale=0.5,label=below:{\small$u_1$}] (v2u1) at (-0.75,-2) {};
			\node[circle,draw,fill=black,scale=0.5,label=below:{\small$u_2$}] (v2u2) at (-0.25,-2) {};
			\node[circle,draw,fill=black,scale=0.5,label=below:{\small$u_3$}] (v2u3) at (0.25,-2) {};
			\node[circle,draw,fill=black,scale=0.5,label=below:{\small$u_4$}] (v2u4) at (0.75,-2) {};
			
			\node[circle,draw,fill=black,scale=0.5,label=above:{\small$v_1$}] (v2v) at (3.5,-1.25) {};
			\node[circle,draw,fill=black,scale=0.5,label=above:{\small$v_2$}] (v2w) at (4.5,-1.25) {};
			\node[circle,draw,fill=black,scale=0.5,label=below:{\small$u_1$}] (v2u1) at (3.25,-2) {};
			\node[circle,draw,fill=black,scale=0.5,label=below:{\small$u_2$}] (v2u2) at (3.75,-2) {};
			\node[circle,draw,fill=black,scale=0.5,label=below:{\small$u_3$}] (v2u3) at (4.25,-2) {};
			\node[circle,draw,fill=black,scale=0.5,label=below:{\small$u_4$}] (v2u4) at (4.75,-2) {};
			\draw (v2v) to (v2u1);
			\draw (v2v) to (v2u2);
			\draw (v2v) to (v2u3);
			
			\node[circle,draw,fill=black,scale=0.5,label=above:{\small$v_1$}] (v3av) at (7.5,-3.75) {};
			\node[circle,draw,fill=black,scale=0.5,label=above:{\small$v_2$}] (v3aw) at (8.5,-3.75) {};
			\node[circle,draw,fill=black,scale=0.5,label=above:{\small$v_3$}] (v3ax) at (8,-3.75) {};
			\node[circle,draw,fill=black,scale=0.5,label=below:{\small$u_1$}] (v3au1) at (7.25,-4.5) {};
			\node[circle,draw,fill=black,scale=0.5,label=below:{\small$u_2$}] (v3au2) at (7.75,-4.5) {};
			\node[circle,draw,fill=black,scale=0.5,label=below:{\small$u_3$}] (v3au3) at (8.25,-4.5) {};
			\node[circle,draw,fill=black,scale=0.5,label=below:{\small$u_4$}] (v3au4) at (8.75,-4.5) {};
			\draw (v3av) to (v3au1);
			\draw (v3av) to (v3au2) to (v3aw);
			\draw (v3av) to (v3au3) to (v3aw);
			\draw (v3aw) to (v3au4);
			
			\node[circle,draw,fill=black,scale=0.5,label=above:{\small$v_1$}] (v3bv) at (7.5,1.25) {};
			\node[circle,draw,fill=black,scale=0.5,label=above:{\small$u_1$}] (v3bu1) at (8,1.25) {};
			\node[circle,draw,fill=black,scale=0.5,label=above:{\small$v_3$}] (v3bx) at (8.5,1.25) {};
			\node[circle,draw,fill=black,scale=0.5,label=below:{\small$u_2$}] (v3bu2) at (7.25,0.5) {};
			\node[circle,draw,fill=black,scale=0.5,label=below:{\small$u_3$}] (v3bu3) at (7.75,0.5) {};
			\node[circle,draw,fill=black,scale=0.5,label=below:{\small$u_4$}] (v3bu4) at (8.25,0.5) {};
			\node[circle,draw,fill=black,scale=0.5,label=below:{\small$v_2$}] (v3bw) at (8.75,0.5) {};
			\draw (v3bv) to (v3bu1);
			\draw (v3bv) to (v3bu2);
			\draw (v3bv) to (v3bu3);
			\draw (v3bu1) to (v3bu4);
			\draw (v3bu1) to (v3bw);
			
			\node[circle,draw,fill=black,scale=0.5,label=above:{\small$v_1$}] (v4acv) at (11.375,-3.75) {};
			\node[circle,draw,fill=black,scale=0.5,label=above:{\small$v_2$}] (v4acw) at (12.375,-3.75) {};
			\node[circle,draw,fill=black,scale=0.5,label=above:{\small$v_3$}] (v4acx) at (11.875,-3.75) {};
			\node[circle,draw,fill=black,scale=0.5,label=above:{\small$v_4$}] (v4acy) at (12.875,-3.75) {};
			\node[circle,draw,fill=black,scale=0.5,label=below:{\small$u_1$}] (v4acu1) at (11.125,-4.5) {};
			\node[circle,draw,fill=black,scale=0.5,label=below:{\small$u_2$}] (v4acu2) at (11.625,-4.5) {};
			\node[circle,draw,fill=black,scale=0.5,label=below:{\small$u_3$}] (v4acu3) at (12.125,-4.5) {};
			\node[circle,draw,fill=black,scale=0.5,label=below:{\small$u_4$}] (v4acu4) at (12.625,-4.5) {};
			\draw (v4acv) to (v4acu1);
			\draw (v4acv) to (v4acu2) to (v4acw);
			\draw (v4acv) to (v4acu3) to (v4acw);
			\draw (v4acw) to (v4acu4);
			\draw (v4acu2) to (v4acx);
			
			\node[circle,draw,fill=black,scale=0.5,label=above:{\small$v_1$}] (v4bav) at (13.75,1.125) {};
			\node[circle,draw,fill=black,scale=0.5,label=above:{\small$u_1$}] (v4bau1) at (14.25,1.125) {};
			\node[circle,draw,fill=black,scale=0.5,label=above:{\small$v_3$}] (v4bax) at (14.75,1.125) {};
			\node[circle,draw,fill=black,scale=0.5,label=above:{\small$v_4$}] (v4bay) at (15.25,1.125) {};
			\node[circle,draw,fill=black,scale=0.5,label=below:{\small$u_2$}] (v4bau2) at (13.5,0.375) {};
			\node[circle,draw,fill=black,scale=0.5,label=below:{\small$u_3$}] (v4bau3) at (14,0.375) {};
			\node[circle,draw,fill=black,scale=0.5,label=below:{\small$u_4$}] (v4bau4) at (14.5,0.375) {};
			\node[circle,draw,fill=black,scale=0.5,label=below:{\small$v_2$}] (v4baw) at (15,0.375) {};
			\draw (v4bav) to (v4bau1);
			\draw (v4bav) to (v4bau2);
			\draw (v4bav) to (v4bau3);
			\draw (v4bau1) to (v4bau4);
			\draw (v4bau1) to (v4baw);
			\draw (v4bau2) .. controls (13.75,0.15) and (14.75,0.15) .. (v4baw) to (v4bax);
			
			\node[circle,draw,fill=black,scale=0.5,label=above:{\small$v_1$}] (v5v) at (15.375,-7.25) {};
			\node[circle,draw,fill=black,scale=0.5,label=above:{\small$v_2$}] (v5w) at (16.375,-7.25) {};
			\node[circle,draw,fill=black,scale=0.5,label=above:{\small$v_3$}] (v5x) at (15.875,-7.25) {};
			\node[circle,draw,fill=black,scale=0.5,label=above:{\small$v_4$}] (v5y) at (16.875,-7.25) {};
			\node[circle,draw,fill=black,scale=0.5,label=below:{\small$u_1$}] (v5u1) at (15.125,-8) {};
			\node[circle,draw,fill=black,scale=0.5,label=below:{\small$u_2$}] (v5u2) at (15.625,-8) {};
			\node[circle,draw,fill=black,scale=0.5,label=below:{\small$u_3$}] (v5u3) at (16.125,-8) {};
			\node[circle,draw,fill=black,scale=0.5,label=below:{\small$u_4$}] (v5u4) at (16.625,-8) {};
			\node[circle,draw,fill=black,scale=0.5,label=below:{\small$v_5$}] (v5z) at (17.125,-8) {};
			\draw (v5v) to (v5u1);
			\draw (v5v) to (v5u2) to (v5w);
			\draw (v5v) to (v5u3) to (v5w);
			\draw (v5w) to (v5u4);
			\draw (v5u2) to (v5x);
			\draw (v5u3) to (v5y);
		\end{tikzpicture}		\caption{A roadmap of the case distinctions made in the proof of \Cref{lem: unavoidable set for pw 4}.
		\label{fig:unavoidable set for pw 4}}
	\end{figure}
\end{landscape}

Using \Cref{lem: unavoidable set for tw 3,lem: unavoidable set for pw 4} and the reductions from \Cref{sec: reducible configurations} we can prove the following two theorems.
\twthree*
\pwfour*
\begin{proof}
	Suppose towards a contradiction that the class of all 3-connected cubic graphs of tree-width at most~3 (path-width at most~4) does not satisfy the 3-decomposition conjecture.
	Let $H$ be a counterexample of minimum order amongst all graphs of this class.
	By \Cref{lem: unavoidable set for tw 3} (\Cref{lem: unavoidable set for pw 4}) at least one of the graphs listed there is a subgraph of $H$.
	Observe that all the corresponding reductions (as discussed in \Cref{sec: reducible configurations}) produce a minor of $H$ and, hence, are tree-width (and path-width) preserving. 
	Moreover, these transformations preserve 3-connectivity.
	Altogether, $H$ can be reduced to a 3-connected graph $G$ of tree-width at most~3 (path-width at most~4) with $|V(G)| < |V(H)|$.
	Since $H$ was a minimum counterexample, the graph $G$ satisfies the 3-decomposition conjecture and, because the transformations used are 3-compatible, so does $H$, yielding a contradiction.
	(Note that for the domino, we only use the reduction to the edge if it preserves 3-connectivity and that we can extend a decomposition from the square to the domino in the case we do not reduce to the edge.)
\end{proof}

Next, we prove the following properties of minimum counterexamples.
\propertiescounterexample*
\begin{proof}
	Suppose that there exists a 3-connected counterexample to the 3-de\-com\-po\-si\-tion conjecture.
	Choose $G$ to be minimum amongst all such counterexamples.
	By \Cref{reducible-graphs}, the six graphs listed in the claim are reducible and hence not subgraphs of $G$.
	We refer to the property that the triangle is not a subgraph of $G$ by ($\Delta$) and use ($K_{2,3}$), (\petv), (cs), (th), ($\boxminus$) for the respective properties corresponding to the other subgraphs.
	Parts~\eqref{itm: trianglefree} and~\eqref{itm: induced cycles} follow immediately from ($\Delta$) and ($\boxminus$).
	We prove~\eqref{itm: P6}.
	Let $wx$ be some edge of $G$.
	
	\medskip
	\noindent
	\textbf{Claim 1.} If $wx$ is contained in some cycle of length~6, then~\eqref{itm: P6} is satisfied.
	
	\noindent
	\textit{Proof.} Let $C\coloneqq uvwxyzu$ be a cycle which contains $wx$.
	By~\eqref{itm: induced cycles} the cycle $C$ is induced and, hence, there exists a vertex $v' \in N_G(v) \setminus V(C)$.
	Consider the path $P \coloneqq v'vwxyz$ as shown on the left of \Cref{fig:min-ce-paths}.
	If $P$ is induced, then the claim is satisfied.
	Therefore, we may assume that~$P$ has a chord.
	Since $C$ is induced one end of this chord is $v'$ and by~($\Delta$) the other end is not $w$.
	It remains to consider the cases that $v'x$, $v'y$, or $v'z$ is an edge of $G$.
	
	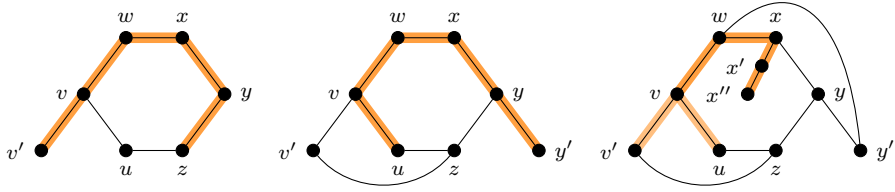
\begin{figure}
		\centering
		\begin{tikzpicture}[scale=0.75]
			\draw[draw,line width=4pt,orange,opacity=0.75,join=round] (-1.5,0) to (-0.75,1) to (0,2) to (1,2) to (1.75,1) to (1,0);
			
			\node[circle,draw,fill=black,scale=0.5,label=below:{$u$}] (u) at (0,0) {};
			\node[circle,draw,fill=black,scale=0.5,label=below:{$z$}] (z) at (1,0) {};
			\node[circle,draw,fill=black,scale=0.5,label=left:{$v$}] (v) at (-0.75,1) {};
			\node[circle,draw,fill=black,scale=0.5,label=left:{$v'$}] (v2) at (-1.5,0) {};
			\node[circle,draw,fill=black,scale=0.5,label=right:{$y$}] (y) at (1.75,1) {};
			\node[circle,draw,fill=black,scale=0.5,label=above:{$w$}] (w) at (0,2) {};
			\node[circle,draw,fill=black,scale=0.5,label=above:{$x$}] (x) at (1,2) {};
			\draw (u) to (v) to (w) to (x) to (y) to (z) to (u);
			\draw (v) to (v2);
			\phantom{
				\draw (v2) to[bend right=50] (z);
			}
		\end{tikzpicture}		\begin{tikzpicture}[scale=0.75]
			\draw[draw,line width=4pt,orange,opacity=0.75,join=round] (0,0) to (-0.75,1) to (0,2) to (1,2) to (1.75,1) to (2.5,0);
			
			\node[circle,draw,fill=black,scale=0.5,label=below:{$u$}] (u) at (0,0) {};
			\node[circle,draw,fill=black,scale=0.5,label=below:{$z$}] (z) at (1,0) {};
			\node[circle,draw,fill=black,scale=0.5,label=left:{$v$}] (v) at (-0.75,1) {};
			\node[circle,draw,fill=black,scale=0.5,label=left:{$v'$}] (v2) at (-1.5,0) {};
			\node[circle,draw,fill=black,scale=0.5,label=right:{$y$}] (y) at (1.75,1) {};
			\node[circle,draw,fill=black,scale=0.5,label=right:{$y'$}] (y2) at (2.5,0) {};
			\node[circle,draw,fill=black,scale=0.5,label=above:{$w$}] (w) at (0,2) {};
			\node[circle,draw,fill=black,scale=0.5,label=above:{$x$}] (x) at (1,2) {};
			\draw (u) to (v) to (w) to (x) to (y) to (z) to (u);
			\draw (v) to (v2);
			\draw (v2) to[bend right=50] (z);
			\draw (y) to (y2);
		\end{tikzpicture}		\begin{tikzpicture}[scale=0.75]
			\draw[draw,line width=4pt,orange,opacity=0.75,join=round] (-0.75,1) to (0,2) to (1,2) to (0.75,1.5) to (0.5,1);
			\draw[draw,line width=4pt,orange,opacity=0.5,join=round] (0,0) to (-0.75,1);
			\draw[draw,line width=4pt,orange,opacity=0.5,join=round] (-1.5,0) to (-0.75,1);
			
			\node[circle,draw,fill=black,scale=0.5,label=below:{$u$}] (u) at (0,0) {};
			\node[circle,draw,fill=black,scale=0.5,label=below:{$z$}] (z) at (1,0) {};
			\node[circle,draw,fill=black,scale=0.5,label=left:{$v$}] (v) at (-0.75,1) {};
			\node[circle,draw,fill=black,scale=0.5,label=left:{$v'$}] (v2) at (-1.5,0) {};
			\node[circle,draw,fill=black,scale=0.5,label=right:{$y$}] (y) at (1.75,1) {};
			\node[circle,draw,fill=black,scale=0.5,label=right:{$y'$}] (y2) at (2.5,0) {};
			\node[circle,draw,fill=black,scale=0.5,label=above:{$w$}] (w) at (0,2) {};
			\node[circle,draw,fill=black,scale=0.5,label=above:{$x$}] (x) at (1,2) {};
			\node[circle,draw,fill=black,scale=0.5,label=left:{$x'$}] (x2) at (0.75,1.5) {};
			\node[circle,draw,fill=black,scale=0.5,label=left:{$x''$}] (x3) at (0.5,1) {};
			\draw (u) to (v) to (w) to (x) to (y) to (z) to (u);
			\draw (v) to (v2) to[bend right=50] (z);
			\draw (y) to (y2) .. controls (2.25,2.5) and (1.25, 3.25).. (w);
			\draw (x) to (x2) to (x3);
		\end{tikzpicture}		\caption{Illustrations of some of the paths regarded in the proof of \Cref{thm: min counterexample}.
		\label{fig:min-ce-paths}}
	\end{figure}
	
	Assume that $v'z \in E(G)$.
	The vertex $y$ has a neighbour $y' \notin E(C) \cup E(P)$ by~($\Delta$).
	Consider the path $uvwxyy'$ illustrated in the middle of \Cref{fig:min-ce-paths}.
	If this path is not induced, then $y'w \in E(G)$ by~($\Delta$) and ($\boxminus$). 
	We label this situation by ($\star$) so we can refer to it later.
	From~($\Delta$) and~($\boxminus$) we obtain that $x$ has a neighbour $x' \notin V(C) \cup \{v',y'\}$.
	We prove that both remaining neighbours of $x'$ are in $V(C) \cup \{v',y'\}$:
	since $G$ is cubic, $x'$ is not adjacent to $v,w,y$, or $z$.
	By~($K_{2,3}$) we obtain that $x'y'\notin E(G)$ and with (cs) we obtain $x'u, x'v' \notin E(G)$.
	Hence, there exists $x'' \in N_G(x')\setminus (V(C) \cup \{v',y'\})$.
	Consider the paths $uvwxx'x''$ and $v'vwxx'x''$ shown on the right of \Cref{fig:min-ce-paths}.
	At least one of the two paths is induced since the only possible chords are $ux''$ or $v'x''$, respectively.
	However, if both chords exist, then~($K_{2,3}$) is violated.
	Altogether, if $v'z \in E(G)$, then $G$ contains a path of the desired form.
	
	Now assume that $v'x \in E(G)$.
	There exists $y' \in N_G(y)\setminus (V(C)\cup\{v'\})$ since~$C$ is chordless and ($\Delta$) holds.
	Consider the path $uvwxyy'$.
	We may assume that this path is not induced, that is, $y'u \in E(G)$ (the remaining potential chord $wy'$ violates~($\boxminus$)).
	Observe that the vertices can be relabelled such that we are in the same situation as~$(\star)$ and, hence, there exists an induced $P_6$.
	
	Finally, let $v'y \in E(G)$.
	Since $C$ is chordless and ($\Delta$) holds, we obtain that there exists $w' \in N_G(w)\setminus (V(C)\cup \{v'\})$.
	The only possible neighbour of $w'$ in $V(C)\cup \{v'\}$ is $z$ by~($\Delta$), (th), and (cs).
	Since $\deg_G(w') =3$ there exists $w'' \in N_G(w')\setminus (V(C)\cup \{v'\})$.
	If $w''w'wxyv'$ is induced, then Claim 1 is true.
	Therefore, assume there is a chord of $w''w'wxyv'$ in $G$.
	By~($\Delta$) the only possible chords are $w''x$ and $w''v'$.
	If $w''x \in E(G)$, then there exists $\tilde{w} \in N_G(w')\setminus (V(C) \cup \{w'', v'\})$ by~($\boxminus$) and~(th), and, the path $\tilde{w}w'wxyv'$ is induced by~(cs).
	If otherwise $w''v' \in E(G)$, then the path $w''w'wxyz$ is induced by~($\Delta$), ($\boxminus$), (cs), and~(\petv). \hfill$\triangleleft$
	
	\medskip
	\noindent
	\textbf{Claim 2.} If $wx$ is the middle edge of a $P_6$ and $wx$ is not contained in a cycle of length~6, then~\eqref{itm: P6} is satisfied.
	
	\noindent
	\textit{Proof.} Let $P\coloneqq uvwxyz$ be a $P_6$ contained as a subgraph $G$.
	If $P$ is induced, then Claim 2 satisfied.
	Therefore, we may assume that $P$ has a chord. Since $G$ is triangle-free and $wx$ is not contained in a $C_6$, the following chords are possible: $ux$, $vy$, $wz$, $uy$, and $vz$.
	By symmetry, it suffices to consider the cases that $ux$, $vy$, or $vz$ is a chord of $P$.
	
	First assume that $ux \in E(G)$.
	By~($\Delta$) and~(th), we obtain that $v$ has a neighbour $v' \notin V(P)$.
	In the same way, we get that $y$ has a neighbour $y' \notin V(P) \cup \{v'\}$.
	The path $v'vwxyy'$ is induced since a chord would either violate~($\Delta$) or ($\boxminus$) or lead to a $C_6$ containing~$wx$.
	
	Now assume that $vy \in E(G)$.
	With~($\Delta$) and~($K_{2,3}$) we obtain that $w$ has a neighbour $w' \notin V(P)$ and $w'$ has a neighbour $w'' \notin V(P)$ by~($\Delta$), ($\boxminus$) and~(th).
	A chord of $w''w'wxyz$ does not exist due to the same three properties and the assumption that no $C_6$ contains~$wx$.
	
	Finally assume that $vz \in E(G)$.
	There is a neighbour $y'$ of $y$ with $y' \notin V(P)$ by~($\Delta$) and~(th).
	The path $uvwxyy'$ is induced since a chord of this path would either violate~($\Delta$) or~(th) or create a $C_6$ with $wx$ as an edge. \hfill$\triangleleft$
	
	\medskip
	\noindent
	\textbf{Claim 3.} There exists a $P_6$ whose middle edge is $wx$.
	
	\noindent
	\textit{Proof.} By ($\Delta$), we obtain $N_G(w) \cap N_G(x) = \emptyset$.
	Let $v \in N_G(w) \setminus \{x\}$.
	There exists a neighbour $u$ of $v$ with $u \notin N_G(w) \cup N_G(x)$ due to ($\Delta$) and ($K_{2,3}$).
	Let $y \in N_G(x)\setminus \{w\}$.
	By ($\Delta$), ($K_{2,3}$), and (th), we obtain that $y$ has a neighbour $z \notin N_G(w) \cup N_G(x) \cup \{u\}$.
	Altogether $uvwxyz$ is the desired $P_6$. \hfill$\triangleleft$
	
	\noindent
	This completes the proof.
\end{proof}

We close this section with a brief discussion of how we proved the theorem below.
\smallgraphs*
The cubic graphs of order at most~20 are known, see~\cite{Brinkmann1996, Meringer1999, BCGM13}.
We used the complete list of cubic graphs of order at most~20 as provided by~\cite{BCGM13}.
For each graph in this list we computed a tree that leads to a 3-decomposition for this graph, i.e., removing the edges of this tree from its host graph results in the disjoint union of a 2-regular graph, a 1-regular graph, and some isolated vertices.
To obtain such a tree for a given cubic graph we by first employed a heuristic.
If the heuristic did not lead to a desired tree,
then our algorithm tackled the problem by an enumeration approach.
The code as well as the computed trees can be found in our GitLab repository\textsuperscript{\ref{gitlab}}.

\section{Further research}
The most pressing question is whether \Cref{thm: reducible or HIST} can be exploited to prove the 3-decomposition conjecture in general.
A natural approach would be the following.
Suppose that there exists some counterexample to the 3-decomposition conjecture.
Reduce a \emph{quasi} 3-decomposition (into a spanning tree, a 2-regular graph and a disjoint union of paths) to some smaller graph which satisfies the conjecture.
Manipulate the reduced graph with its decomposition such that it can be extended again to a 3-decomposition of the original graph. The challenge here is that it is not clear that such a quasi 3-decomposition admits the reductions of \Cref{thm: reducible or HIST} in general.

A second obvious question is: can the 3-decomposition conjecture be proved for more graph classes by extending
the list of properties of a minimum counterexample? 
For example, we believe that there should be a reduction-based proof for the known result that planar graphs have a 3-decomposition~\cite{HoffmannOstenhof2018}.

Finally, it would be desirable to extend the computational results of \Cref{small-graphs-3-dec} to larger graphs.

\newpage
\bibliography{3Decomposition}
\bibliographystyle{alpha}

\newpage
\appendix
\section{Straight-forward extensions}

Here we go over the straight-forward local behaviours of the extensions to the $K_{2,3}$, the claw-square, the domino, the \petv, and the twin-house that we omitted in \Cref{sec: reducible configurations}.
We specify these in the same way we did for the triangle in \Cref{fig:3-dec-extension-triangle}, by determining the possible local behaviours at the smaller graph and showing how to extend them.
We also reduce the amount of local behaviours required for the square by employing switching arguments.

\paragraph{The $K_{2,3}$.}
Recall the extension in \Cref{fig:extension-reduction-k23}.
We determined the local behaviours of the single vertex in \Cref{fig:behaviour-node} and, because the $K_{2,3}$ is symmetric, we only need to check these three cases.
For each, \Cref{fig:3-dec-extension-k23} shows how to extend these to the $K_{2,3}$.
\begin{figure}[htb]
	\centering
	\begin{tikzpicture}
		\drawVertexDown{0}{0}{0.9}{A}{{tcol,ccol,ccol}}{black}{black!50}
		\drawVertexDown{4}{0}{0.9}{A}{{mcol,tcol,tcol}}{black}{black!50}
		\drawVertexDown{8}{0}{0.9}{A}{{tcol,tcol,tcol}}{black}{black!50}
		\transformationArrow{0.5}{0}{1}{1.5}
		\transformationArrow{4.5}{0}{1}{1.5}
		\transformationArrow{8.5}{0}{1}{1.5}
		\drawKTwoThree{2.25}{0}{0.75}{B}{{ccol,tcol,ccol,tcol,tcol,tcol,ccol,tcol,ccol}}{black}{black!50}
		\drawKTwoThree{6.25}{0}{0.75}{C}{{tcol,tcol,mcol,mcol,tcol,tcol,tcol,mcol,tcol}}{black}{black!50}
		\drawKTwoThree{10.25}{0}{0.75}{D}{{tcol,tcol,mcol,mcol,tcol,tcol,tcol,tcol,tcol}}{black}{black!50}
	\end{tikzpicture}
	\caption{Extending a 3-decomposition to a $K_{2,3}$.
	\label{fig:3-dec-extension-k23}}
\end{figure}
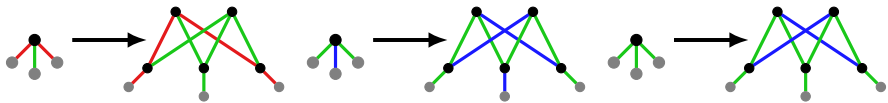

\paragraph{The \petv.}
For the extension from \Cref{fig:extension-reduction-petv} we need the same three behaviours as before, since the \petv{} is also symmetric.
The second one is covered in \Cref{3-compatible-petv}, and the remaining two are shown in \Cref{fig:3-dec-extension-petv}.
\begin{figure}[htb]
	\centering
	\begin{tikzpicture}
		\drawVertexDown{0}{0}{1}{A}{{tcol,ccol,ccol}}{black}{black!50}
		\drawVertexDown{6}{0}{1}{A}{{tcol,tcol,tcol}}{black}{black!50}
		\transformationArrow{1}{0}{1.25}{2}
		\transformationArrow{7}{0}{1.25}{2}
		\drawPetMinusV{3.75}{-0.25}{0.5}{A}{{ccol,ccol,ccol,tcol,mcol,tcol,tcol,tcol,tcol,tcol,tcol,tcol,ccol,ccol,tcol}}{black}{black!50}
		\drawPetMinusV{9.75}{-0.25}{0.5}{A}{{mcol,tcol,mcol,tcol,mcol,tcol,tcol,mcol,tcol,tcol,tcol,tcol,tcol,tcol,tcol}}{black}{black!50}
	\end{tikzpicture}
	\caption{Extending a 3-decomposition to a \petv.
	\label{fig:3-dec-extension-petv}}
\end{figure}
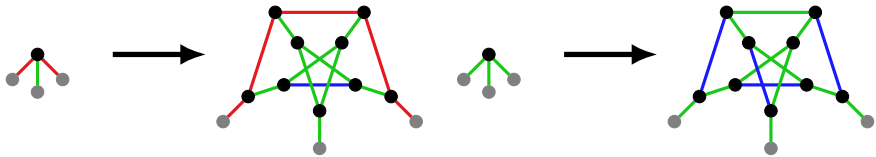

\paragraph{The domino, part I.}
Next we look at the extension in \Cref{fig:extension-reduction-edge-domino}.
For this, we first need to look at the possible behaviours of a 3-decomposition of $G$ at the edge $u_1u_2$ we are extending.
If the edge is in $C$, then one further edge at each end must also be in $C$ with the remaining two being part of $T$.
This yields, up to reflectional symmetry at the horizontal and vertical axis, the first two cases in \Cref{fig:behaviour-edge}.
Note that both the edge and the domino have these two symmetries, such that it suffices to regard one of the symmetrical cases.
\begin{figure}[htb]
	\centering
	\begin{tikzpicture}
		\drawEdge{0}{0}{0.75}{1}{{ccol,ccol,ccol,tcol,tcol}}{black}{black!50}		
		\drawEdge{1.5}{0}{0.75}{2}{{ccol,ccol,tcol,tcol,ccol}}{black}{black!50}
		\drawEdge{3}{0}{0.75}{3}{{mcol,tcol,tcol,tcol,tcol}}{black}{black!50}
		\drawEdge{4.5}{0}{0.75}{4}{{tcol,ccol,tcol,ccol,tcol}}{black}{black!50}
		\drawEdge{6}{0}{0.75}{5}{{tcol,ccol,mcol,ccol,tcol}}{black}{black!50}
		
		\drawEdge{0.75}{-1}{0.75}{10}{{tcol,tcol,tcol,tcol,tcol}}{black}{black!50}	
		\drawEdge{2.25}{-1}{0.75}{11}{{tcol,mcol,tcol,tcol,tcol}}{black}{black!50}
		\drawEdge{3.75}{-1}{0.75}{12}{{tcol,mcol,mcol,tcol,tcol}}{black}{black!50}
		\drawEdge{5.25}{-1}{0.75}{13}{{tcol,mcol,tcol,tcol,mcol}}{black}{black!50}
	\end{tikzpicture}
	\caption{Behaviour of a 3-decomposition at an edge.
	\label{fig:behaviour-edge}}
\end{figure}
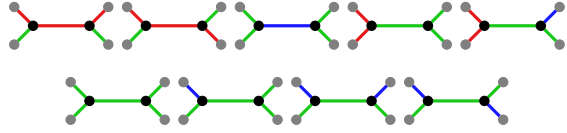

Next, assume that $u_1u_2$ is in $M$, then all remaining edges are in $T$, giving us the third case.
For all other cases, we have $u_1u_2\in T$ and we need to distinguish the behaviour of the remaining edges.
Should one of them be in $C$, say one at $u_1$, then both at this vertex are and at least one at $u_2$ is in $T$.
This gives us the fourth and fifth case.

We may now assume no edges are in $C$ and only need to consider the amount of $M$-edges left.
If there are none, all edges must be in $T$.
A single $M$-edge just yields one case up to symmetries and two give us two more.
More than two are not possible.

In total, there are nine types of behaviours to check, all of which give rise to a straight-forward extension to the domino, as shown in \Cref{fig:3-dec-extension-domino-edge}.
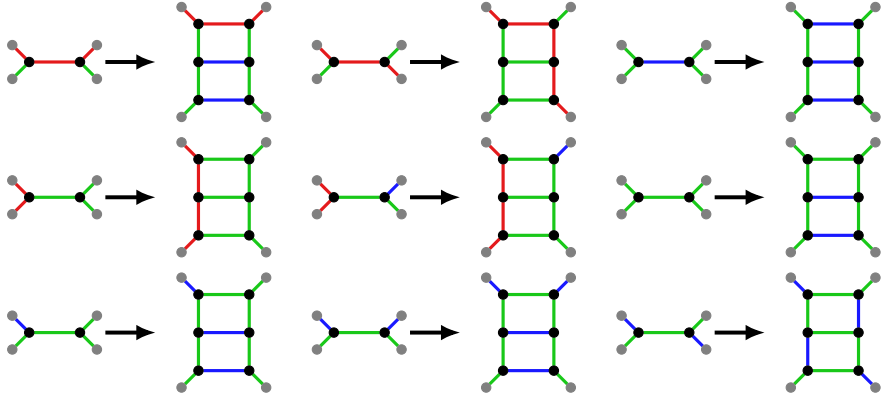
\begin{figure}[htb]
	\centering
	\begin{tikzpicture}[scale=.9]
		\drawEdge{0}{0}{0.75}{1}{{ccol,ccol,ccol,tcol,tcol}}{black}{black!50}
		\transformationArrow{0.75}{0}{0.75}{1.5}
		\drawDomino{2.5}{0}{0.75}{A}{{ccol,tcol,tcol,mcol,tcol,tcol,mcol,ccol,ccol,tcol,tcol}}{black}{black!50}
		\drawEdge{4.5}{0}{0.75}{2}{{ccol,ccol,tcol,tcol,ccol}}{black}{black!50}
		\transformationArrow{5.25}{0}{0.75}{1.5}
		\drawDomino{7}{0}{0.75}{A}{{ccol,tcol,ccol,tcol,tcol,ccol,tcol,ccol,tcol,tcol,ccol}}{black}{black!50}
		\drawEdge{9}{0}{0.75}{3}{{mcol,tcol,tcol,tcol,tcol}}{black}{black!50}
		\transformationArrow{9.75}{0}{0.75}{1.5}
		\drawDomino{11.5}{0}{0.75}{A}{{mcol,tcol,tcol,mcol,tcol,tcol,mcol,tcol,tcol,tcol,tcol}}{black}{black!50}
		
		\drawEdge{0}{-2}{0.75}{4}{{tcol,ccol,tcol,ccol,tcol}}{black}{black!50}
		\transformationArrow{0.75}{-2}{0.75}{1.5}
		\drawDomino{2.5}{-2}{0.75}{A}{{tcol,ccol,tcol,tcol,ccol,tcol,tcol,ccol,tcol,ccol,tcol}}{black}{black!50}
		\drawEdge{4.5}{-2}{0.75}{5}{{tcol,ccol,mcol,ccol,tcol}}{black}{black!50}
		\transformationArrow{5.25}{-2}{0.75}{1.5}
		\drawDomino{7}{-2}{0.75}{A}{{tcol,ccol,tcol,tcol,ccol,tcol,tcol,ccol,mcol,ccol,tcol}}{black}{black!50}
		\drawEdge{9}{-2}{0.75}{6}{{tcol,tcol,tcol,tcol,tcol}}{black}{black!50}
		\transformationArrow{9.75}{-2}{0.75}{1.5}
		\drawDomino{11.5}{-2}{0.75}{A}{{tcol,tcol,tcol,mcol,tcol,tcol,mcol,tcol,tcol,tcol,tcol}}{black}{black!50}
		
		\drawEdge{0}{-4}{0.75}{7}{{tcol,mcol,tcol,tcol,tcol}}{black}{black!50}
		\transformationArrow{0.75}{-4}{0.75}{1.5}
		\drawDomino{2.5}{-4}{0.75}{A}{{tcol,tcol,tcol,mcol,tcol,tcol,mcol,mcol,tcol,tcol,tcol}}{black}{black!50}
		\drawEdge{4.5}{-4}{0.75}{8}{{tcol,mcol,mcol,tcol,tcol}}{black}{black!50}
		\transformationArrow{5.25}{-4}{0.75}{1.5}
		\drawDomino{7}{-4}{0.75}{A}{{tcol,tcol,tcol,mcol,tcol,tcol,mcol,mcol,mcol,tcol,tcol}}{black}{black!50}
		\drawEdge{9}{-4}{0.75}{9}{{tcol,mcol,tcol,tcol,mcol}}{black}{black!50}
		\transformationArrow{9.75}{-4}{0.75}{1.5}
		\drawDomino{11.5}{-4}{0.75}{A}{{tcol,tcol,mcol,tcol,mcol,tcol,tcol,mcol,tcol,tcol,mcol}}{black}{black!50}
	\end{tikzpicture}
	\caption{Extending a 3-decomposition from the edge to the domino.
	\label{fig:3-dec-extension-domino-edge}}
\end{figure}

\paragraph{Local behaviours of the square.}
All remaining extensions start from the square, which is why we first check which local behaviours 3-de\-com\-po\-si\-tions can exhibit.
Afterwards, we use switching arguments to eliminate behaviours that need not be considered.
Let $u_1u_2u_4u_3u_1$ be a square (as seen in \Cref{fig:partly-compatible-square-domino}) in a graph $G$ with 3-de\-com\-po\-sition $(T,C,M)$.
The possibilities obtained initially are shown in \Cref{fig:behaviour-square}, disregarding rotational symmetry.
\begin{figure}[htb]
	\centering
	\begin{tikzpicture}
		\drawSquare{0}{0}{0.75}{1}{{ccol,ccol,ccol,ccol,tcol,tcol,tcol,tcol}}{black}{black!50}	
		\drawSquare{1.5}{0}{0.75}{2}{{ccol,tcol,ccol,tcol,ccol,tcol,tcol,ccol}}{black}{black!50}
		\drawSquare{3}{0}{0.75}{3}{{ccol,tcol,tcol,mcol,ccol,ccol,tcol,tcol}}{black}{black!50}
		\drawSquare{4.5}{0}{0.75}{4}{{ccol,tcol,tcol,tcol,ccol,ccol,mcol,tcol}}{black}{black!50}
		\drawSquare{6}{0}{0.75}{5}{{ccol,tcol,tcol,tcol,ccol,ccol,tcol,mcol}}{black}{black!50}
		\drawSquare{7.5}{0}{0.75}{6}{{ccol,tcol,tcol,tcol,ccol,ccol,tcol,tcol}}{black}{black!50}
		
		\drawSquare{0.75}{-1.5}{0.75}{10}{{mcol,tcol,tcol,mcol,tcol,tcol,tcol,tcol}}{black}{black!50}		
		\drawSquare{2.25}{-1.5}{0.75}{11}{{mcol,tcol,tcol,tcol,tcol,tcol,mcol,mcol}}{black}{black!50}	
		\drawSquare{3.75}{-1.5}{0.75}{12}{{mcol,tcol,tcol,tcol,tcol,tcol,mcol,tcol}}{black}{black!50}
		\drawSquare{5.25}{-1.5}{0.75}{13}{{mcol,tcol,tcol,tcol,tcol,tcol,tcol,mcol}}{black}{black!50}
		\drawSquare{6.75}{-1.5}{0.75}{14}{{mcol,tcol,tcol,tcol,tcol,tcol,tcol,tcol}}{black}{black!50}
	\end{tikzpicture}
	\caption{Behaviours of a 3-decomposition at a square (without rotations).
	\label{fig:behaviour-square}}
\end{figure}
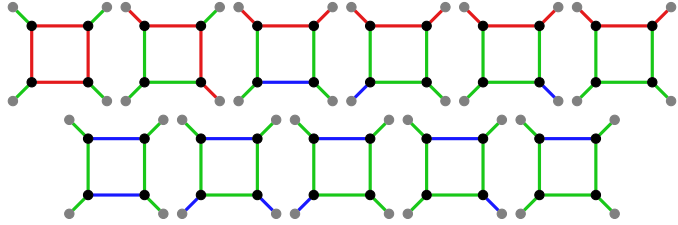

We first distinguish cases based on the amount of edges of $C$ in the square.
If there are four such edges, then the remaining edges with an end in the square must be part of $T$.
It is not possible for exactly three edges to be in $C$ as this isolates an edge.
In the case of two edges, they form a path between two non-adjacent vertices and all other edges are in $T$, yielding just one case with four rotations.
For a single edge, we need its ends to be incident to another edge in $C$ that leaves the square and the missing edge is part of $T$.
The final edge of the square can then be in $M$, resulting in two more $T$-edges, or in $T$, resulting in at most one $M$-edge.
This gives the next four cases, each of which has four rotations.

This just leaves the case without any $C$-edges in the square.
We now differentiate by the amount of $M$-edges, where zero, three, and four are impossible.
Two $M$-edges must be opposite and all other edges are in $T$, resulting in one case with two rotations.
If there is just a single $M$-edge, it comes with five $T$-edges (three in the square and two more at its ends).
The remaining two can be any combination of $M$- and $T$-edges, yielding a total of four more cases with four rotations each.

The total amount of cases is now 39, so significantly more than we want to check for each extension, which is why we show how these can be reduced to the 18 shown in \Cref{fig:behaviour-square-reduced}.
This is done by locally changing the decomposition of the eliminated cases into one that is still present.
\begin{figure}[htb]
	\centering
	\begin{tikzpicture}
		\drawSquare{0}{0}{0.75}{1}{{ccol,ccol,ccol,ccol,tcol,tcol,tcol,tcol}}{black}{black!50}	
		\drawSquare{1.5}{0}{0.75}{2}{{ccol,tcol,ccol,tcol,ccol,tcol,tcol,ccol}}{black}{black!50}
		\drawSquare{3}{0}{0.75}{3}{{tcol,tcol,ccol,ccol,tcol,ccol,ccol,tcol}}{black}{black!50}
		\drawSquare{4.5}{0}{0.75}{4}{{ccol,tcol,tcol,mcol,ccol,ccol,tcol,tcol}}{black}{black!50}
		\drawSquare{6}{0}{0.75}{5}{{tcol,mcol,ccol,tcol,tcol,ccol,tcol,ccol}}{black}{black!50}
		\drawSquare{7.5}{0}{0.75}{6}{{mcol,tcol,tcol,ccol,tcol,tcol,ccol,ccol}}{black}{black!50}
		
		\drawSquare{0}{-1.5}{0.75}{7}{{tcol,ccol,mcol,tcol,ccol,tcol,ccol,tcol}}{black}{black!50}
		\drawSquare{1.5}{-1.5}{0.75}{8}{{ccol,tcol,tcol,tcol,ccol,ccol,tcol,tcol}}{black}{black!50}
		\drawSquare{3}{-1.5}{0.75}{9}{{tcol,tcol,ccol,tcol,tcol,ccol,tcol,ccol}}{black}{black!50}
		\drawSquare{4.5}{-1.5}{0.75}{10}{{tcol,tcol,tcol,ccol,tcol,tcol,ccol,ccol}}{black}{black!50}		
		\drawSquare{6}{-1.5}{0.75}{11}{{tcol,ccol,tcol,tcol,ccol,tcol,ccol,tcol}}{black}{black!50}	
		\drawSquare{7.5}{-1.5}{0.75}{12}{{mcol,tcol,tcol,mcol,tcol,tcol,tcol,tcol}}{black}{black!50}
		
		\drawSquare{0}{-3}{0.75}{13}{{tcol,mcol,mcol,tcol,tcol,tcol,tcol,tcol}}{black}{black!50}
		\drawSquare{1.5}{-3}{0.75}{14}{{mcol,tcol,tcol,tcol,tcol,tcol,mcol,mcol}}{black}{black!50}
		\drawSquare{3}{-3}{0.75}{15}{{tcol,tcol,mcol,tcol,mcol,tcol,mcol,tcol}}{black}{black!50}
		\drawSquare{4.5}{-3}{0.75}{16}{{tcol,tcol,tcol,mcol,mcol,mcol,tcol,tcol}}{black}{black!50}
		\drawSquare{6}{-3}{0.75}{17}{{tcol,mcol,tcol,tcol,tcol,mcol,tcol,mcol}}{black}{black!50}
		\drawSquare{7.5}{-3}{0.75}{18}{{mcol,tcol,tcol,tcol,tcol,tcol,tcol,tcol}}{black}{black!50}
	\end{tikzpicture}
	\caption{The local behaviours of the square that suffice to be checked.
	\label{fig:behaviour-square-reduced}}
\end{figure}
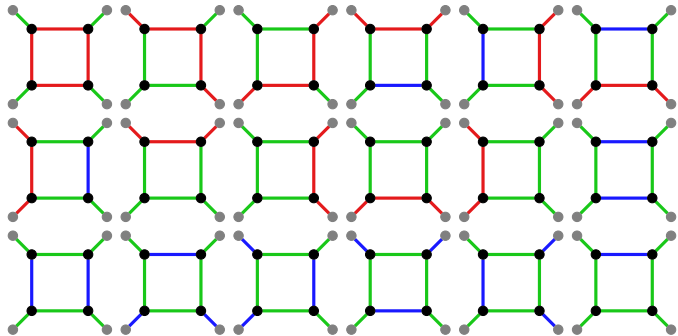

The switches used to achieve this are shown in \Cref{fig:behaviour-square-transformations} and we describe each of them here in turn.
For the first one we note that the length-2 paths through the square can be used interchangeably, removing the two edges of the tree creates two isolated vertices that are reconnected by using those previously in a cycle, saving us two cases.
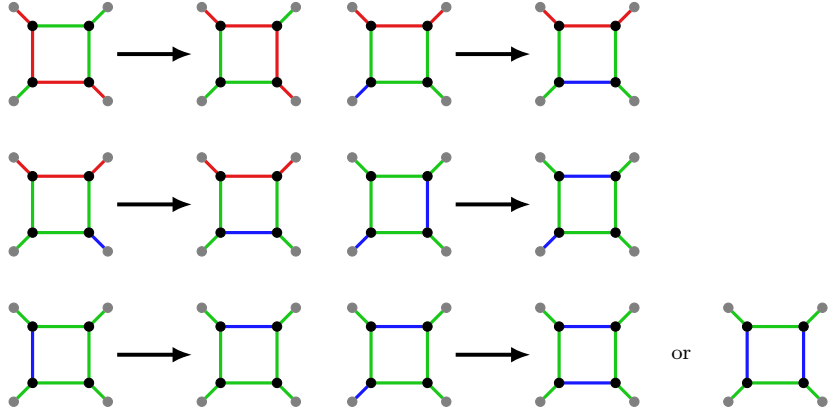
\begin{figure}[htb]
	\centering
	\begin{tikzpicture}
		\drawSquare{0}{0}{0.75}{1}{{tcol,ccol,tcol,ccol,ccol,tcol,tcol,ccol}}{black}{black!50}
		\transformationArrow{0.75}{0}{1}{1.5}
		\drawSquare{2.5}{0}{0.75}{1}{{ccol,tcol,ccol,tcol,ccol,tcol,tcol,ccol}}{black}{black!50}
		
		\drawSquare{4.5}{0}{0.75}{1}{{ccol,tcol,tcol,tcol,ccol,ccol,mcol,tcol}}{black}{black!50}
		\transformationArrow{5.25}{0}{1}{1.5}
		\drawSquare{7}{0}{0.75}{1}{{ccol,tcol,tcol,mcol,ccol,ccol,tcol,tcol}}{black}{black!50}
		
		\drawSquare{0}{-2}{0.75}{1}{{ccol,tcol,tcol,tcol,ccol,ccol,tcol,mcol}}{black}{black!50}	
		\transformationArrow{0.75}{-2}{1}{1.5}
		\drawSquare{2.5}{-2}{0.75}{1}{{ccol,tcol,tcol,mcol,ccol,ccol,tcol,tcol}}{black}{black!50}
		
		\drawSquare{4.5}{-2}{0.75}{1}{{tcol,tcol,mcol,tcol,tcol,tcol,mcol,tcol}}{black}{black!50}	
		\transformationArrow{5.25}{-2}{1}{1.5}
		\drawSquare{7}{-2}{0.75}{1}{{mcol,tcol,tcol,tcol,tcol,tcol,mcol,tcol}}{black}{black!50}
		
		\drawSquare{0}{-4}{0.75}{1}{{tcol,mcol,tcol,tcol,tcol,tcol,tcol,tcol}}{black}{black!50}
		\transformationArrow{0.75}{-4}{1}{1.5}
		\drawSquare{2.5}{-4}{0.75}{1}{{mcol,tcol,tcol,tcol,tcol,tcol,tcol,tcol}}{black}{black!50}
		
		\drawSquare{4.5}{-4}{0.75}{1}{{mcol,tcol,tcol,tcol,tcol,tcol,mcol,tcol}}{black}{black!50}	
		\transformationArrow{5.25}{-4}{1}{1.5}
		\drawSquare{7}{-4}{0.75}{1}{{mcol,tcol,tcol,mcol,tcol,tcol,tcol,tcol}}{black}{black!50}
		\node at (8.25,-4) {or};
		\drawSquare{9.5}{-4}{0.75}{1}{{tcol,mcol,mcol,tcol,tcol,tcol,tcol,tcol}}{black}{black!50}
	\end{tikzpicture}
	\caption{The transformations used to eliminate cases.
	\label{fig:behaviour-square-transformations}}
\end{figure}

Next, regard the case with a single $C$-edge $u_1u_2$ where the edge at $u_3$ that is not in the square is in the matching.
By removing the $T$-edge $u_3u_4$ we get two components, one of which is the edge $u_1u_3$.
By moving the $M$-edge at $u_3$ to the tree, these components are unified, giving us a new tree.
The analogous transformation works if the $M$-edge is at $u_4$ and for all rotations, saving us eight cases in total.

This completes the reduction of cases with $C$-edges.
Let us now regard the case in which the square contains a single $M$-edge, say $u_2u_4$ and there is a second one at $u_3$.
Adding the edge $u_2u_4$ to the tree yields the square as the unique cycle and we can remove $u_1u_2$ to create a new spanning tree and 3-decomposition.
As this works for the four rotations, we save another four cases.
We can also remove three of the four rotations of the square with one $M$-edge and only $T$-edges elsewhere with the same argument.

Finally, the graph with $M$-edges $u_1u_2$ and $u_3v_3$ can be reduced to one of the two cases with two $M$-edges, reducing the total amount of cases by four yet again.
To see this, note that adding the $M$-edge at $u_3$ to the tree yields a cycle.
If it uses the edge $u_3u_4$, we put it into the matching instead and get a new 3-decomposition.
Otherwise, the edge $u_3u_1$ is used and the cycle consists of this edge and a path from $u_1$ to $u_3$ that does not meet the square. 
But we have already seen that we may exchange the edges $u_1u_2$ and $u_2u_4$ and in the resulting graph the cycle obtained when adding the $M$-edge at $u_3$ to the tree remains unchanged.
This lets us swap it with $u_1u_3$ to obtain a new 3-decomposition.

We have now completed the switches.
Note that there are symmetric cases amongst the remaining 18, but the symmetries we may use also depend on which are present in the extension.
We cannot use rotational symmetry, for example, as neither the domino, the twin-house, nor the claw-square have this symmetry.

\paragraph{The domino, part II.}
We can now complete the missing extension for the domino, seen on the right of \Cref{fig:partly-compatible-square-domino}.
Of the behaviours from \Cref{fig:behaviour-square-reduced}, we can omit the following due to the domino's symmetries: 3, 6, 7, 10, 11, 16, and 17, where the number indicates their position.
We have covered behaviour~15 in \Cref{3-compatible-square-domino}, leaving the ten shown in \Cref{fig:3-dec-extension-domino-square}.
\begin{figure}[htb]
	\centering
	\begin{tikzpicture}[scale=0.9]
		\drawSquare{0}{0}{0.75}{1}{{ccol,ccol,ccol,ccol,tcol,tcol,tcol,tcol}}{black}{black!50}
		\transformationArrow{0.75}{0}{0.75}{1.5}
		\drawDomino{2.5}{0}{0.75}{A}{{mcol,tcol,tcol,ccol,ccol,ccol,ccol,tcol,tcol,tcol,tcol}}{black}{black!50}
		\drawSquare{4.5}{0}{0.75}{2}{{ccol,tcol,ccol,tcol,ccol,tcol,tcol,ccol}}{black}{black!50}
		\transformationArrow{5.25}{0}{0.75}{1.5}	
		\drawDomino{7}{0}{0.75}{A}{{ccol,tcol,ccol,tcol,tcol,ccol,tcol,ccol,tcol,tcol,ccol}}{black}{black!50}
		\drawSquare{9}{0}{0.75}{3}{{ccol,tcol,tcol,mcol,ccol,ccol,tcol,tcol}}{black}{black!50}
		\transformationArrow{9.75}{0}{0.75}{1.5}
		\drawDomino{11.5}{0}{0.75}{A}{{ccol,tcol,tcol,mcol,tcol,tcol,mcol,ccol,ccol,tcol,tcol}}{black}{black!50}
		
		\drawSquare{0}{-2}{0.75}{4}{{tcol,mcol,ccol,tcol,tcol,ccol,tcol,ccol}}{black}{black!50}	
		\transformationArrow{0.75}{-2}{0.75}{1.5}
		\drawDomino{2.5}{-2}{0.75}{A}{{tcol,mcol,ccol,tcol,tcol,ccol,tcol,tcol,ccol,tcol,ccol}}{black}{black!50}
		\drawSquare{4.5}{-2}{0.75}{5}{{ccol,tcol,tcol,tcol,ccol,ccol,tcol,tcol}}{black}{black!50}
		\transformationArrow{5.25}{-2}{0.75}{1.5}
		\drawDomino{7}{-2}{0.75}{A}{{ccol,tcol,tcol,mcol,tcol,tcol,tcol,ccol,ccol,tcol,tcol}}{black}{black!50}
		\drawSquare{9}{-2}{0.75}{6}{{tcol,tcol,ccol,tcol,tcol,ccol,tcol,ccol}}{black}{black!50}
		\transformationArrow{9.75}{-2}{0.75}{1.5}
		\drawDomino{11.5}{-2}{0.75}{A}{{tcol,tcol,ccol,tcol,tcol,ccol,tcol,tcol,ccol,tcol,ccol}}{black}{black!50}
		
		\drawSquare{0}{-4}{0.75}{7}{{mcol,tcol,tcol,mcol,tcol,tcol,tcol,tcol}}{black}{black!50}
		\transformationArrow{0.75}{-4}{0.75}{1.5}
		\drawDomino{2.5}{-4}{0.75}{A}{{mcol,tcol,tcol,mcol,tcol,tcol,mcol,tcol,tcol,tcol,tcol}}{black}{black!50}
		\drawSquare{4.5}{-4}{0.75}{8}{{tcol,mcol,mcol,tcol,tcol,tcol,tcol,tcol}}{black}{black!50}
		\transformationArrow{5.25}{-4}{0.75}{1.5}
		\drawDomino{7}{-4}{0.75}{A}{{tcol,mcol,mcol,tcol,tcol,tcol,mcol,tcol,tcol,tcol,tcol}}{black}{black!50}
		\drawSquare{9}{-4}{0.75}{9}{{mcol,tcol,tcol,tcol,tcol,tcol,mcol,mcol}}{black}{black!50}
		\transformationArrow{9.75}{-4}{0.75}{1.5}
		\drawDomino{11.5}{-4}{0.75}{A}{{mcol,tcol,tcol,mcol,tcol,tcol,tcol,tcol,tcol,mcol,mcol}}{black}{black!50}
		
		\drawSquare{4.5}{-6}{0.75}{11}{{mcol,tcol,tcol,tcol,tcol,tcol,tcol,tcol}}{black}{black!50}
		\transformationArrow{5.25}{-6}{0.75}{1.5}	
		\drawDomino{7}{-6}{0.75}{A}{{mcol,tcol,tcol,mcol,tcol,tcol,tcol,tcol,tcol,tcol,tcol}}{black}{black!50}
	\end{tikzpicture}
	\caption{Extending a 3-decomposition from the square to the domino.
	\label{fig:3-dec-extension-domino-square}}
\end{figure}
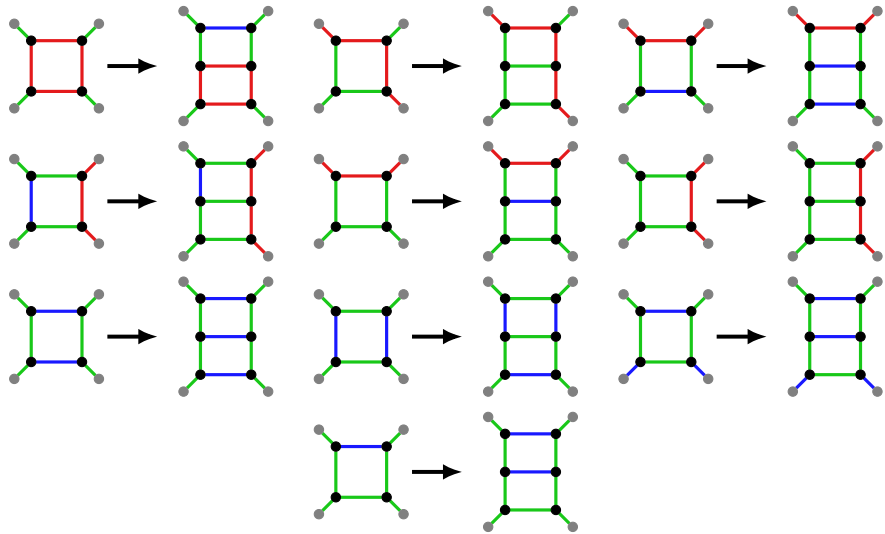

\paragraph{The twin-house.}
The next extension of the square that we regard is the one to the twin-house, seen in \Cref{fig:extension-reduction-twin-house}.
Of the behaviours from \Cref{fig:behaviour-square-reduced}, we can omit the ones at position 7, 11, and 17, by symmetry.
We have covered behaviours~13 and~14 in \Cref{3-compatible-square-domino}, leaving the 13 shown in \Cref{fig:3-dec-extension-twin-house}.
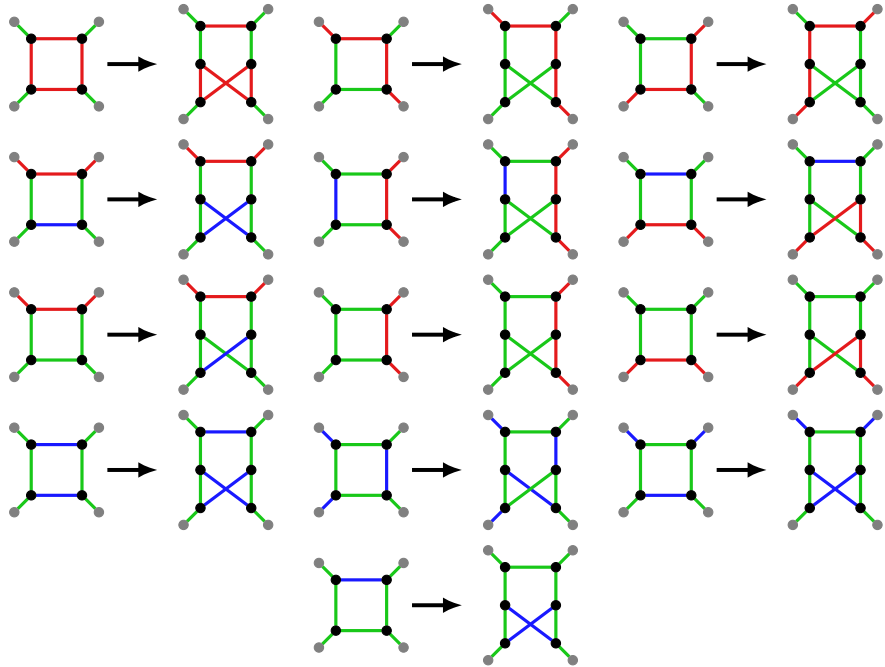
\begin{figure}[htb]
	\centering
	\begin{tikzpicture}[scale=.9]
		\drawSquare{0}{0}{0.75}{1}{{ccol,ccol,ccol,ccol,tcol,tcol,tcol,tcol}}{black}{black!50}	
		\transformationArrow{0.75}{0}{0.75}{1.5}
		\drawGFour{2.5}{0}{0.75}{A}{{ccol,tcol,tcol,ccol,ccol,ccol,ccol,tcol,tcol,tcol,tcol}}{black}{black!50}
		\drawSquare{4.5}{0}{0.75}{2}{{ccol,tcol,ccol,tcol,ccol,tcol,tcol,ccol}}{black}{black!50}
		\transformationArrow{5.25}{0}{0.75}{1.5}	
		\drawGFour{7}{0}{0.75}{A}{{ccol,tcol,ccol,tcol,ccol,tcol,tcol,ccol,tcol,tcol,ccol}}{black}{black!50}
		\drawSquare{9}{0}{0.75}{3}{{tcol,tcol,ccol,ccol,tcol,ccol,ccol,tcol}}{black}{black!50}
		\transformationArrow{9.75}{0}{0.75}{1.5}
		\drawGFour{11.5}{0}{0.75}{A}{{ccol,ccol,tcol,ccol,tcol,tcol,tcol,tcol,ccol,ccol,tcol}}{black}{black!50}
		
		\drawSquare{0}{-2}{0.75}{4}{{ccol,tcol,tcol,mcol,ccol,ccol,tcol,tcol}}{black}{black!50}	
		\transformationArrow{0.75}{-2}{0.75}{1.5}
		\drawGFour{2.5}{-2}{0.75}{A}{{ccol,tcol,tcol,tcol,tcol,mcol,mcol,ccol,ccol,tcol,tcol}}{black}{black!50}
		\drawSquare{4.5}{-2}{0.75}{5}{{tcol,mcol,ccol,tcol,tcol,ccol,tcol,ccol}}{black}{black!50}
		\transformationArrow{5.25}{-2}{0.75}{1.5}
		\drawGFour{7}{-2}{0.75}{A}{{tcol,mcol,ccol,tcol,ccol,tcol,tcol,tcol,ccol,tcol,ccol}}{black}{black!50}
		\drawSquare{9}{-2}{0.75}{6}{{mcol,tcol,tcol,ccol,tcol,tcol,ccol,ccol}}{black}{black!50}
		\transformationArrow{9.75}{-2}{0.75}{1.5}
		\drawGFour{11.5}{-2}{0.75}{A}{{mcol,tcol,tcol,tcol,ccol,tcol,ccol,tcol,tcol,ccol,ccol}}{black}{black!50}
		
		\drawSquare{0}{-4}{0.75}{7}{{ccol,tcol,tcol,tcol,ccol,ccol,tcol,tcol}}{black}{black!50}	
		\transformationArrow{0.75}{-4}{0.75}{1.5}
		\drawGFour{2.5}{-4}{0.75}{A}{{ccol,tcol,tcol,tcol,tcol,tcol,mcol,ccol,ccol,tcol,tcol}}{black}{black!50}
		\drawSquare{4.5}{-4}{0.75}{8}{{tcol,tcol,ccol,tcol,tcol,ccol,tcol,ccol}}{black}{black!50}
		\transformationArrow{5.25}{-4}{0.75}{1.5}
		\drawGFour{7}{-4}{0.75}{A}{{tcol,tcol,ccol,tcol,ccol,tcol,tcol,tcol,ccol,tcol,ccol}}{black}{black!50}
		\drawSquare{9}{-4}{0.75}{9}{{tcol,tcol,tcol,ccol,tcol,tcol,ccol,ccol}}{black}{black!50}
		\transformationArrow{9.75}{-4}{0.75}{1.5}
		\drawGFour{11.5}{-4}{0.75}{A}{{tcol,tcol,tcol,tcol,ccol,tcol,ccol,tcol,tcol,ccol,ccol}}{black}{black!50}
		
		\drawSquare{0}{-6}{0.75}{10}{{mcol,tcol,tcol,mcol,tcol,tcol,tcol,tcol}}{black}{black!50}	
		\transformationArrow{0.75}{-6}{0.75}{1.5}	
		\drawGFour{2.5}{-6}{0.75}{A}{{mcol,tcol,tcol,tcol,tcol,mcol,mcol,tcol,tcol,tcol,tcol}}{black}{black!50}
		\drawSquare{4.5}{-6}{0.75}{11}{{tcol,tcol,mcol,tcol,mcol,tcol,mcol,tcol}}{black}{black!50}
		\transformationArrow{5.25}{-6}{0.75}{1.5}	
		\drawGFour{7}{-6}{0.75}{A}{{tcol,tcol,mcol,tcol,tcol,mcol,tcol,mcol,tcol,mcol,tcol}}{black}{black!50}
		\drawSquare{9}{-6}{0.75}{12}{{tcol,tcol,tcol,mcol,mcol,mcol,tcol,tcol}}{black}{black!50}
		\transformationArrow{9.75}{-6}{0.75}{1.5}
		\drawGFour{11.5}{-6}{0.75}{A}{{tcol,tcol,tcol,tcol,tcol,mcol,mcol,mcol,mcol,tcol,tcol}}{black}{black!50}
		
		\drawSquare{4.5}{-8}{0.75}{14}{{mcol,tcol,tcol,tcol,tcol,tcol,tcol,tcol}}{black}{black!50}
		\transformationArrow{5.25}{-8}{0.75}{1.5}
		\drawGFour{7}{-8}{0.75}{A}{{tcol,tcol,tcol,tcol,tcol,mcol,mcol,tcol,tcol,tcol,tcol}}{black}{black!50}
	\end{tikzpicture}
	\caption{Extending a 3-decomposition to a twin-house.
	\label{fig:3-dec-extension-twin-house}}
\end{figure}

\paragraph{The claw-square.}
We finish by covering the last extension, the one from the square to the claw-square, seen in \Cref{fig:extension-reduction-claw-square}.
Here we can eliminate the behaviours at position 6, 7, 10, 11, 13, 15 and 16 in \Cref{fig:behaviour-square-reduced} by symmetry.
We are left with the 11 behaviours shown in \Cref{fig:3-dec-extension-claw-square}.
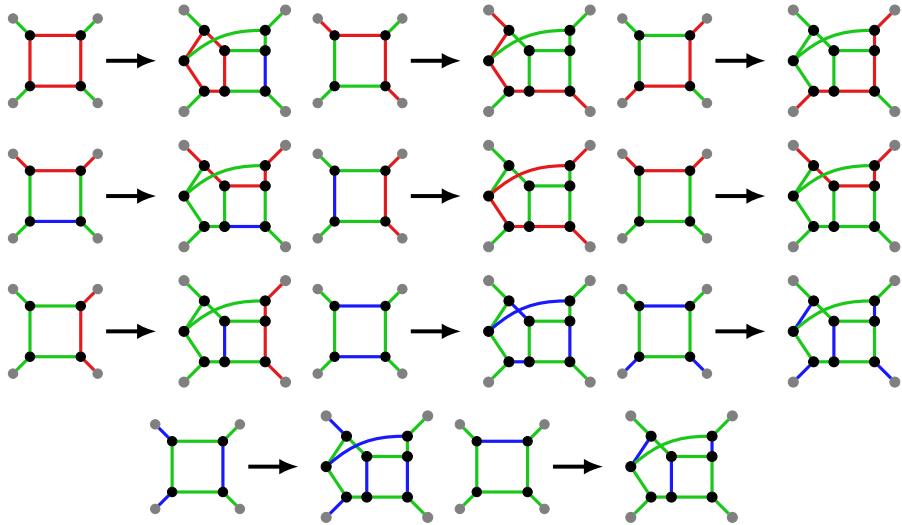
\begin{figure}[htb]
	\centering
	\begin{tikzpicture}[scale=0.9]
		\drawSquare{0}{0}{0.75}{1}{{ccol,ccol,ccol,ccol,tcol,tcol,tcol,tcol}}{black}{black!50}	
		\transformationArrow{0.75}{0}{0.75}{1.5}
		\drawGFive{2.5}{0.15}{0.6}{A}{{tcol,ccol,mcol,tcol,ccol,tcol,ccol,ccol,tcol,ccol,tcol,tcol,tcol,tcol}}{black}{black!50}
		\drawSquare{4.5}{0}{0.75}{2}{{ccol,tcol,ccol,tcol,ccol,tcol,tcol,ccol}}{black}{black!50}
		\transformationArrow{5.25}{0}{0.75}{1.5}	
		\drawGFive{7}{0.15}{0.6}{A}{{tcol,tcol,tcol,ccol,tcol,tcol,ccol,ccol,tcol,ccol,ccol,tcol,tcol,ccol}}{black}{black!50}
		\drawSquare{9}{0}{0.75}{3}{{tcol,tcol,ccol,ccol,tcol,ccol,ccol,tcol}}{black}{black!50}
		\transformationArrow{9.75}{0}{0.75}{1.5}
		\drawGFive{11.5}{0.15}{0.6}{A}{{tcol,tcol,ccol,ccol,tcol,ccol,ccol,tcol,tcol,tcol,tcol,ccol,ccol,tcol}}{black}{black!50}
		
		\drawSquare{0}{-2}{0.75}{4}{{ccol,tcol,tcol,mcol,ccol,ccol,tcol,tcol}}{black}{black!50}	
		\transformationArrow{0.75}{-2}{0.75}{1.5}
		\drawGFive{2.5}{-1.85}{0.6}{A}{{ccol,tcol,tcol,mcol,ccol,ccol,tcol,tcol,tcol,tcol,ccol,ccol,tcol,tcol}}{black}{black!50}
		\drawSquare{4.5}{-2}{0.75}{5}{{tcol,mcol,ccol,tcol,tcol,ccol,tcol,ccol}}{black}{black!50}
		\transformationArrow{5.25}{-2}{0.75}{1.5}
		\drawGFive{7}{-1.85}{0.6}{A}{{tcol,tcol,tcol,ccol,tcol,tcol,ccol,tcol,ccol,ccol,tcol,ccol,tcol,ccol}}{black}{black!50}
		\drawSquare{9}{-2}{0.75}{6}{{ccol,tcol,tcol,tcol,ccol,ccol,tcol,tcol}}{black}{black!50}
		\transformationArrow{9.75}{-2}{0.75}{1.5}
		\drawGFive{11.5}{-1.85}{0.6}{A}{{ccol,tcol,tcol,tcol,ccol,ccol,tcol,tcol,tcol,tcol,ccol,ccol,tcol,tcol}}{black}{black!50}
		\drawSquare{0}{-4}{0.75}{7}{{tcol,tcol,ccol,tcol,tcol,ccol,tcol,ccol}}{black}{black!50}
		\transformationArrow{0.75}{-4}{0.75}{1.5}
		\drawGFive{2.5}{-3.85}{0.6}{A}{{tcol,mcol,ccol,tcol,tcol,ccol,tcol,tcol,tcol,tcol,tcol,ccol,tcol,ccol}}{black}{black!50}
		\drawSquare{4.5}{-4}{0.75}{8}{{mcol,tcol,tcol,mcol,tcol,tcol,tcol,tcol}}{black}{black!50}
		\transformationArrow{5.25}{-4}{0.75}{1.5}
		\drawGFive{7}{-3.85}{0.6}{A}{{tcol,tcol,mcol,tcol,mcol,tcol,mcol,tcol,mcol,tcol,tcol,tcol,tcol,tcol}}{black}{black!50}
		\drawSquare{9}{-4}{0.75}{9}{{mcol,tcol,tcol,tcol,tcol,tcol,mcol,mcol}}{black}{black!50}
		\transformationArrow{9.75}{-4}{0.75}{1.5}
		\drawGFive{11.5}{-3.85}{0.6}{A}{{tcol,mcol,tcol,tcol,tcol,mcol,tcol,mcol,tcol,tcol,tcol,tcol,mcol,mcol}}{black}{black!50}
		
		\drawSquare{2.1}{-6}{0.75}{10}{{tcol,tcol,mcol,tcol,mcol,tcol,mcol,tcol}}{black}{black!50}		
		\transformationArrow{2.85}{-6}{0.75}{1.5}	
		\drawGFive{4.6}{-5.85}{0.6}{A}{{tcol,mcol,mcol,tcol,tcol,tcol,tcol,tcol,mcol,tcol,mcol,tcol,mcol,tcol}}{black}{black!50}
		\drawSquare{6.6}{-6}{0.75}{11}{{mcol,tcol,tcol,tcol,tcol,tcol,tcol,tcol}}{black}{black!50}
		\transformationArrow{7.35}{-6}{0.75}{1.5}	
		\drawGFive{9.1}{-5.85}{0.6}{A}{{tcol,mcol,tcol,tcol,tcol,mcol,tcol,mcol,tcol,tcol,tcol,tcol,tcol,tcol}}{black}{black!50}
	\end{tikzpicture}
	\caption{Extending a 3-decomposition to the claw-square.
	\label{fig:3-dec-extension-claw-square}}
\end{figure}

\end{document}